\definecolor{cream}{RGB}{203, 237, 204}
\newtheorem{theorem}{Theorem}[section]
\newtheorem{lemma}[theorem]{Lemma}
\newtheorem{proposition}[theorem]{Proposition}
\newtheorem{corollary}[theorem]{Corollary}
\newtheorem{definition}[theorem]{Definition}
\newtheorem{claim}[theorem]{Claim}
\newtheorem{observation}[theorem]{Observation}
\newcommand{\F}{{\mathcal{F}}}
\newcommand{\mG}{{\mathcal{G}}}
\newcommand{\C}{{\mathcal{C}}}
\newcommand{\T}{{\mathcal{T}}}
\newcommand{\mH}{{\mathcal{H}}}
\newcommand{\mF}{{\mathcal{F}}}
\newcommand{\mS}{{\mathcal{S}}}
\newcommand{\keywords}[1]{\par\noindent\textbf{Keywords:} #1}
\title{Quantitative frameproof codes and hypergraphs}
\author{
Wenjie~Zhong\thanks{School of Mathematical Sciences, University of Science and Technology of China, Hefei, 230026, Anhui, China.
    Emails: \texttt{zhongwj@mail.ustc.edu.cn}.}
\and Xinqi~Huang\thanks{School of Mathematical Sciences, University of Science and Technology of China, Hefei, China and Extremal Combinatorics and Probability Group (ECOPRO), Institute for Basic Science (IBS), Daejeon, South Korea.
    Emails: \texttt{huangxq@mail.ustc.edu.cn}.}
\and Xiande~Zhang\thanks{School of Mathematical Sciences, University of Science and Technology of China, Hefei, 230026, Anhui, China; Hefei National Laboratory, University of Science and Technology of China, Hefei, 230088, Anhui, China.
    Email: \texttt{drzhangx@ustc.edu.cn}.}
}
\date{}
\begin{document}
\maketitle
\begin{abstract}
Frameproof codes are a class of secure codes introduced by Boneh and
Shaw in the context of digital fingerprinting, and have been widely studied from a combinatorial point of view.
In this paper, we study a quantitative extension of frameproof codes and hypergraphs, referred to as {\it quantitative frameproof codes and hypergraphs}. We give asymptotically optimal bounds on the maximum sizes of   these structures and determine their exact sizes for a broad range of parameters. In particular, we introduce a generalized version of the Erd\H{o}s  matching number in our proof and derive relevant estimates for it.
%

\end{abstract}
\keywords{frameproof code, focal-free code, Erd\H{o}s matching number, packing}

\section{Introduction}

The study of maximum cardinalities of combinatorial objects that avoid specific forbidden configurations represents a central theme in extremal combinatorics, with various applications in coding theory and theoretical computer science (see, e.g.~\cite{jukna2011extremal}).


Among such systems, frameproof codes have attracted considerable attention \cite{blackburn2003frameproof,liu2024near,staddon2001combinatorial}.  A frameproof code is a code in which every codeword can be distinguished from any small collection of other codewords by at least one coordinate.
These codes play an important role in several practical scenarios including digital fingerprinting \cite{boneh1998collusion} and   traitor tracing schemes \cite{staddon2001combinatorial}. Moreover, frameproof codes are closely related to several other classes of secure codes,
 such as separable codes \cite{blackburn2015probabilistic,cheng2011separable,gao2014new}, parent-identifying codes \cite{gu2019probabilistic, shangguan2018new} and traceability codes \cite{blackburn2010traceability,gu2017bounds,kabatiansky2019traceability,staddon2001combinatorial}.
 Connections between frameproof codes and other combinatorial objects have also been extensively studied, including separating hash families \cite{bazrafshan2011bounds,blackburn2008bound,ge2019some,shangguan2016separating,stinson2008generalized,wei2025separating} and cover-free families \cite{d2017cover,ge2019some, staddon2001combinatorial}.

The central problem in the study of frameproof codes is to determine the maximum cardinality of such codes for given parameters. Liu et al. \cite{liu2024near,liu2025near} established asymptotically optimal lower bounds that match the upper bounds previously derived by Blackburn
 \cite{blackburn2003frameproof}. In \cite{alon2023near}, Alon and Holzman introduced focal families as a variant of the known sunflower problem \cite{erdos1960intersection,erdHos1978combinatorial}.
More recently, Huang et al. \cite{huang2024focal} demonstrated that the methodology of frameproof codes can be effectively applied to determine asymptotically optimal bounds on the maximum cardinality of focal-free hypergraphs and codes.

In this paper, we introduce a general framework which includes frameproof codes and focal-free codes as two extreme cases (and their hypergraph formulations as well). We call them  {\it quantitative frameproof codes and  hypergraphs}.  In the context of digital fingerprinting (see e.g. \cite{liu2024near}), a frameproof code can be used to protect copyrighted materials from unauthorized use in a scenario where a coalition of  $c$ users with fingerprints $\bm x^1,\ldots,\bm x^c\in \mathcal{C}\subset [q]^n$ can produce a pirate copy by taking some vector $\bm x=(x_1,\ldots,x_n)\in [q]^n$ with each $x_i\in \{x_i^j:j\in [c]\}$. Here $[*]$ denotes the set of consecutive integers $\{1,2,\ldots,*\}$. A set $\mathcal{C}$ of fingerprints is $c$-frameproof if and
only if any coalition of at most $c$ users can not frame another
user, that is, the vector $\bm x$ produced by the coalition must be a member of the coalition. A quantitative frameproof code can be used similarly, but under the assumption that a coalition has less power than that in the above scenario: the same coalition (but allowing repeated users) can only produce a vector $\bm x$  with each $x_i$ appearing at least $s$ times in the multiset $\{\{x_i^j:j\in [c]\}\}$ for some threshold parameter $s$. A frameproof code is a special case when $s=1$. Formal definitions of  quantitative frameproof codes and hypergraphs are given below.

%
%


For any integers $q,c\ge 2$ and $1\le s\le c-1$, a code $\mathcal{C}\subset [q]^n$ is said to be {\it $(c,s)$-frameproof} if for every $c+1$ codewords $\bm x^0,\mathbf{x}^1,\ldots,\bm x^c$, where $\bm x^0\neq \bm x^j$ for each $j\in [c]$ but $\bm x^1,\ldots,\bm x^c$ are not necessarily distinct, there must be some coordinate $i\in [n]$ such that $|\{j\in [c]:x_i^j=x_i^0\}|<s$. Here the quantitative parameter $s$ indicates that to what extent a codeword can be distinguished from other $c$ repeatable codewords.
With respect to $s$, the quantitative frameproof code contains two extreme cases: when $s=1$, $\mathcal{C}$ is a frameproof code  \cite{liu2024near}; when $s=c-1$, $\mathcal{C}$ turns to be a focal-free code  \cite{huang2024focal}.

When $q=2$, we also consider a one-sided counterpart of the quantitative frameproof code, that is the quantitative frameproof hypergraph.
For a set $A$, we denote $2^A$ to be the power set of $A$ and $\binom{A}{k}$ to be the set of all $k$-subsets of $A$.
A  hypergraph $\mathcal{F}\subseteq 2^{[n]}$  is said to be {\it $(c,s)$-frameproof} if for every $c+1$ edges $A_0,A_1,\ldots,A_c$, where $A_0\neq A_j$ for each $j\in [c]$ but $A_1,\ldots,A_c$ are not necessarily distinct, there must be some vertex $i\in A_0$ such that $|\{j\in [c]:i\in A_j\}|<s$. Note that when $s=1$, we come to the definition of cover-free families \cite{wei2023cover}; when $s=c-1$, $\mathcal{F}$ becomes a focal-free hypergraph \cite{huang2024focal}.

\subsection{Main results}

Let $f_{c,s}(n,k)$ denote the maximum size of a $(c,s)$-frameproof hypergraph in $\binom{[n]}{k}$. Let $f_{c,s}^{q}(n)$ denote the maximum size of a $(c,s)$-frameproof code in $[q]^n$.
Our main results establish asymptotically optimal estimates for both
 $f_{c,s}(n,k)$ for large $n$ and $f_{c,s}^{q}(n)$ for large $q$. A key ingredient in our analysis is the generalized Erd\H{o}s matching number $m(n,t,\lambda;k_1,k_2)$, which will be formally defined below the two theorems.


\begin{theorem}\label{mainthm-turan density}
Let $c$, $s$ and $k$ be fixed integers satisfying $c,k\ge 2$ and $1\le s \le c-1$.
We have
    \begin{align*}     \lim_{n\to\infty}f_{c,s}(n,k)\bigg/\binom{n}{t}
    =
    \frac{1}
    {\binom{k}{t}- m(k,t,\lambda; s+1,c-s+1)},
    \end{align*}
    where $t=\lceil sk/c\rceil$ and $\lambda\in [c]$ is the integer that satisfies $\lambda\equiv sk \pmod{c}$.
\end{theorem}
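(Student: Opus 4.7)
The plan is to establish matching asymptotic upper and lower bounds on $f_{c,s}(n,k)/\binom{n}{t}$, viewing the theorem as a Turán-type density statement for the $t$-shadow of $(c,s)$-frameproof families.

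For the upper bound I would fix a $(c,s)$-frameproof family $\mathcal{F}\subseteq\binom{[n]}{k}$ and analyse the $t$-degrees $d(T)=|\{A\in\mathcal{F}:T\subseteq A\}|$ for $T\in\binom{[n]}{t}$. Setting $\mathcal{U}_A=\{T\in\binom{A}{t}:d(T)\geq 2\}$ for each $A\in\mathcal{F}$, the central claim is that after a cleaning step removing only $o(|\mathcal{F}|)$ edges, $|\mathcal{U}_A|\leq m(k,t,\lambda;s+1,c-s+1)$ for every surviving $A$. Any violation should allow one to extract, from the configuration of $A$-internal $t$-subsets that reappear in other edges, $c$ members $A^{(1)},\ldots,A^{(c)}\in\mathcal{F}\setminus\{A\}$ whose intersections with $A$ cover every vertex of $A$ at least $s$ times; this is precisely a framing of $A$ in $\mathcal{F}$, contradicting the $(c,s)$-frameproof property. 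With the claim in hand, the double-counting identity $\sum_{A\in\mathcal{F}}|\binom{A}{t}\setminus\mathcal{U}_A|=|\{T:d(T)=1\}|\leq\binom{n}{t}$ yields $|\mathcal{F}|(\binom{k}{t}-m(k,t,\lambda;s+1,c-s+1))\leq\binom{n}{t}+o(\binom{n}{t})$, matching the claimed upper density.

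For the lower bound I would set $\mu=\binom{k}{t}-m(k,t,\lambda;s+1,c-s+1)$ and construct $\mathcal{F}$ as a near-perfect packing: a family of $k$-subsets of $[n]$ in which every $t$-subset of $[n]$ lies in at most $\mu$ members, of size $(1-o(1))\binom{n}{t}/\mu$. Such packings exist for large $n$ by a standard Rödl-nibble argument. Verifying $(c,s)$-frameproofness is the reverse of the upper-bound claim: a hypothetical framing of some $A_0\in\mathcal{F}$ by $c$ other members would, via the extremal definition of $m(k,t,\lambda;s+1,c-s+1)$, force more than $\mu$ of the $t$-subsets of $A_0$ to be multiply covered, violating the packing constraint.

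The main obstacle is the exact combinatorial dictionary between framings of a fixed edge $A_0$ and the extremal configurations counted by $m(k,t,\lambda;s+1,c-s+1)$. Rewriting the framing condition ``each vertex of $A_0$ lies in at least $s$ of the $A^{(j)}$'' as ``every $(c{-}s{+}1)$-subset of the $A^{(j)}$'s has union containing $A_0$'' indicates where the parameter $c-s+1$ enters the auxiliary matching hypergraph; the parameter $s+1$ and the residue $\lambda$, controlled by the identity $sk=(t-1)c+\lambda$, encode the multiplicity structure that appears at the $t$-shadow level after averaging $\binom{|A_0\cap A^{(j)}|}{t}$ over $j\in[c]$. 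Pinning down this translation tightly in both directions, and arranging the cleaning step so that it discards only $o(|\mathcal{F}|)$ edges, is where the bulk of the technical effort should lie.
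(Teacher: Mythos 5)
Your overall architecture (an own/non-own $t$-subset count for the upper bound, a packing construction for the lower bound) matches the paper's, but both halves have a genuine gap exactly where the residue $\lambda$ enters. For the upper bound: from $|\mathcal{U}_A|>m(k,t,\lambda;s+1,c-s+1)$ you obtain an $(s+1,c-s+1)$-disjoint collection $T_1,\ldots,T_\lambda$ of non-own $t$-subsets of $A$, but these cover each vertex of $A$ only between $s-(c-\lambda)$ and $s$ times, so for $\lambda<c$ they do not by themselves yield a framing of $A$. One must supplement them with $c-\lambda$ further \emph{non-own} subsets of $A$ of size $t-1$ so that $sA=T_1\uplus\cdots\uplus T_c$ (using $\lambda t+(c-\lambda)(t-1)=sk$ and a greedy partition of the residual multiset), and those $(t-1)$-sets are guaranteed to be non-own only for edges $A$ having \emph{no} own $(t-1)$-subset. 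The paper therefore splits $\mathcal{F}$ into $\mathcal{F}_0$ (no own $(t-1)$-subset) and $\mathcal{F}_1$, proves the degree bound only on $\mathcal{F}_0$, and handles $\mathcal{F}_1$ not by deletion but by noting that each own $(t-1)$-subset generates $n-t+1$ distinct $t$-sets, each counted at most $t$ times, so every edge of $\mathcal{F}_1$ already contributes at least $(n-t+1)/t\ge\binom{k}{t}-m$ to the budget $\binom{n}{t}$. Your unspecified ``cleaning step'' removing $o(|\mathcal{F}|)$ edges does not obviously produce a family in which every surviving edge has all its $(t-1)$-subsets non-own, and without that the extraction of the $c$ framing members fails.

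For the lower bound, the proposed object is not the right one. A family in which every $t$-subset of $[n]$ lies in at most $\mu$ members has at most $\mu\binom{n}{t}/\binom{k}{t}$ members, which is smaller than your target $(1-o(1))\binom{n}{t}/\mu$ whenever $\mu^2<\binom{k}{t}$; and the verification is logically reversed, since a framing of $A_0$ forces the multiply covered $t$-subsets of $A_0$ to \emph{contain} an $(s+1,c-s+1)$-disjoint $\lambda$-collection, which by the definition of $m$ says nothing about there being more than $\mu$ (or even more than $m$) of them. What the paper uses instead is the Frankl--F\"uredi \emph{induced} $\mathcal{F}$-packing with $\mathcal{F}=\binom{[k]}{t}\setminus\mathcal{G}$, where $\mathcal{G}$ is extremal for $m(k,t,\lambda;s+1,c-s+1)$: one takes the vertex sets of $(1-o(1))\binom{n}{t}/\mu$ edge-disjoint induced copies of $\mathcal{F}$. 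The induced properties (pairwise vertex intersections of size at most $t$, and any $t$-intersection is a non-edge of both copies) force, in a hypothetical framing of $V_0$, at least $\lambda$ intersections $V_0\cap V_i$ of size exactly $t$ lying in the copy of $\mathcal{G}$ on $V_0$, where they form a forbidden $(s+1,c-s+1)$-disjoint $\lambda$-collection. Your sketch is missing both this construction and the quantifier-correct verification.
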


\begin{theorem}\label{mainthm:code}
	For any fixed integers $n$, $c$ and $s$ satisfying $n,c\geq 2$ and $1\le s\le c-1$, we have
\begin{align*}
    \lim\limits_{q\to\infty} f_{c,s}^{q}(n) \bigg/ q^t
	=
 \frac{\binom{n}{t}}{\binom{n}{t}-m(n,t,\lambda;s+1,c-s+1)},
\end{align*}
where $t=\lceil sn/c\rceil$ and $\lambda\in [c]$ is the integer that satisfies $\lambda\equiv sn \pmod{c}$.
\end{theorem}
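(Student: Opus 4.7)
The plan is to establish matching upper and lower bounds on $f_{c,s}^q(n)$, both driven by the combinatorial structure of agreement sets between codewords. For the upper bound, let $\mathcal{C}\subseteq [q]^n$ be any $(c,s)$-frameproof code. For each $\bm x^0\in\mathcal{C}$ record the \emph{agreement sets} $A(\bm x^0,\bm y):=\{i\in[n]:x_i^0=y_i\}$ with the other codewords and let $\mathcal{A}(\bm x^0):=\{A(\bm x^0,\bm y):\bm y\in\mathcal{C}\setminus\{\bm x^0\}\}$. The defining property of a $(c,s)$-frameproof code translates verbatim into the statement that no multiset of $c$ sets from $\mathcal{A}(\bm x^0)$ covers every coordinate $i\in[n]$ at least $s$ times. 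Set $\mathcal{G}(\bm x^0):=\binom{[n]}{t}\setminus\bigcup_{A\in\mathcal{A}(\bm x^0)}\binom{A}{t}$; then $T\in\mathcal{G}(\bm x^0)$ exactly when the projection $\pi_T(\bm x^0)$ differs from $\pi_T(\bm y)$ for every other codeword $\bm y$, so that $\pi_T(\bm x^0)$ uniquely identifies $\bm x^0$ within $\mathcal{C}$. The first main step will be a shadow estimate: under the above condition on $\mathcal{A}(\bm x^0)$, $\big|\bigcup_{A\in\mathcal{A}(\bm x^0)}\binom{A}{t}\big|\leq m(n,t,\lambda;s+1,c-s+1)$, and hence $|\mathcal{G}(\bm x^0)|\geq\binom{n}{t}-m(n,t,\lambda;s+1,c-s+1)$. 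A standard double count then closes the argument: summing over $\bm x^0\in\mathcal{C}$ and noting that for each fixed $T$ the map $\bm x\mapsto\pi_T(\bm x)$ is injective on $\{\bm x:T\in\mathcal{G}(\bm x)\}$, we obtain
\[
|\mathcal{C}|\left(\binom{n}{t}-m(n,t,\lambda;s+1,c-s+1)\right)\leq\sum_{T\in\binom{[n]}{t}}q^t=\binom{n}{t}q^t,
\]
which rearranges to the claimed upper bound.

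For the lower bound, start from an extremal family $\mathcal{F}^*\subseteq\binom{[n]}{t}$ realizing $m(n,t,\lambda;s+1,c-s+1)$. The plan is to exploit the $\binom{n}{t}-|\mathcal{F}^*|$ ``unused'' $t$-subsets to manufacture roughly $\binom{n}{t}q^t/(\binom{n}{t}-|\mathcal{F}^*|)$ codewords while preserving the frameproof property. A natural route is a greedy or probabilistic construction in which one repeatedly selects a $t$-subset $T\notin\mathcal{F}^*$ together with a value $\bm v\in[q]^T$ not yet used by any previously chosen codeword on $T$, then extends consistently to $[n]$ in a way dictated by $\mathcal{F}^*$. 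The extremality of $\mathcal{F}^*$ will be used to rule out any $(c,s)$-framing among the constructed codewords, and a direct count will show that the resulting code matches the upper bound up to $o(q^t)$ as $q\to\infty$.

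The heart of the argument, and the main obstacle, is the shadow estimate linking the $(c,s)$-frameproof condition on $\mathcal{A}(\bm x^0)$ to the generalized Erd\H{o}s matching bound $m(n,t,\lambda;s+1,c-s+1)$; this is where the specific parameters $t=\lceil sn/c\rceil$ and $\lambda\equiv sn\pmod c$ enter the picture, through the averaging fact that any $c$ proper subsets of $[n]$ that $s$-cover $[n]$ as a multiset must contain one of size at least $t$, with $\lambda$ refining how many such ``large'' sets are forced. Proving this bound tightly is expected to parallel the analogous step in Theorem~\ref{mainthm-turan density} and to involve a careful study of shifted or compressed extremal families. A secondary non-trivial obstacle is matching the upper bound on the construction side without an asymptotic loss in the leading constant.
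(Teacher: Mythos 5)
Your overall architecture is the same as the paper's: for each codeword, count the $t$-subsets on which it is uniquely identified, bound the exceptional ("non-own") $t$-subsets by the generalized matching number, and double count; then construct the code from a near-perfect packing of the complement of an extremal family. But the key step you isolate is stated in a form that is false. You claim that for \emph{every} codeword $\bm x^0$ in a $(c,s)$-frameproof code one has $\big|\bigcup_{A\in\mathcal{A}(\bm x^0)}\binom{A}{t}\big|\le m(n,t,\lambda;s+1,c-s+1)$. The intended argument is: an $(s+1,c-s+1)$-disjoint collection $T_1,\dots,T_\lambda$ of non-own $t$-subsets should yield a forbidden coalition. However, these $\lambda$ sets cover each coordinate only between $s-(c-\lambda)$ and $s$ times, so one must complete the multiset to $s[n]$ with $c-\lambda$ further sets of size $t-1$ (this is Claim~\ref{claim-disjoint}), and those completing sets must themselves be non-own. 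That is guaranteed only when $\bm x^0$ has \emph{no} own $(t-1)$-subsequence. A concrete counterexample to your universal claim: $c=2$, $s=1$, $n=3$, $t=2$, $\lambda=1$, so $m(3,2,1;2,3)=0$; the code $\{(1,1,1),(1,1,2)\}$ is $2$-frameproof, yet $(1,1,1)$ has the non-own $2$-subset $\{1,2\}$. This is why the paper (Lemma~\ref{lem-gen-hypergraph-ownsubset} and the proof of Theorem~\ref{thm-gen-code-upperbound}) splits $\mathcal{C}$ into codewords without own $(t-1)$-subsequences, for which your estimate does hold, and the rest, each of which instead contributes at least $(n-t+1)q/t$ exclusively-owned $t$-sets via an own $(t-1)$-subsequence; the latter quantity exceeds $\binom{n}{t}-m(n,t,\lambda;s+1,c-s+1)$ only once $q$ is large, which is also why the bound is asymptotic in $q$. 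Without this case distinction your double count does not close.

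On the lower bound, what you give is a plan rather than an argument. The paper applies the faithful induced $\mathcal{F}$-packing theorem of \cite{liu2024near} (Lemma~\ref{lem-faithful induced packing}) to $\mathcal{F}=\binom{[n]}{t}\setminus\mathcal{F}^*$ inside the complete $n$-partite $t$-uniform hypergraph $\mH_n^{(t)}(q)$, and the code consists of the vertex sets of the packed copies. The "induced" conditions (iv)--(v) of Definition~\ref{def-hypergraph-packing} are exactly what force any hypothetical $(c,s)$-focal configuration to produce $\lambda$ many $(s+1,c-s+1)$-disjoint $t$-sets lying in the copy of $\mathcal{F}^*$ associated with the focus, contradicting the extremality of $\mathcal{F}^*$; a greedy assignment of unused values on $t$-subsets does not obviously control pairwise agreements of size greater than $t$, nor where the agreement $t$-sets land relative to $\mathcal{F}^*$, so the construction as sketched is not yet a proof.
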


We remark that when $s=1$, the above results come to the asymptotically optimal values for frameproof codes in \cite{liu2024near}; when $s=c-1$, the above results come to the asymptotically optimal values for focal-free hypergraphs and codes in \cite{huang2024focal}.
As you see, our results rely on a newly-defined function $m(n,t,\lambda;k_1,k_2)$, which we call the {\it generalized matching number} defined below. It is a generalization of the known Erd\H{o}s matching number (see, e.g. \cite{erdos1965problem}), denoted by $m(n,t,\lambda)$, which is the maximum number of edges in a $t$-uniform hypergraph on $n$ vertices that does not contain $\lambda$ pairwise disjoint edges. 

\begin{definition}\label{def-disjoint}
	Given positive integers $n,\lambda,k_1,k_2$, we say a collection $A_1, A_2, \ldots, A_\lambda$ of repeatable subsets of $[n]$ is \textbf{$k_1$-disjoint} if for any $k_1$-subset $B\in \binom{[\lambda]}{k_1}$, we have $\cap_{i\in B} A_i = \emptyset$; it is
 \textbf{$k_2$-covering} if for any $k_2$-subset $B\in \binom{[\lambda]}{k_2}$, we have $\cup_{i\in B} A_i = [n]$, or equivalently $\cap_{i\in B} ([n]\backslash A_i) = \emptyset$.

	We say a collection $A_1, A_2, \ldots, A_\lambda$ of repeatable subsets of $[n]$ is \textbf{$(k_1,k_2)$-disjoint} if it is $k_1$-disjoint and $k_2$-covering.
\end{definition}

 Equivalently,  a collection above is $(k_1,k_2)$-disjoint if and only if each point of $[n]$ appears in at least $(\lambda-k_2+1)$ sets and at most $(k_1-1)$ sets, which requires that $\min\{k_1,k_2\}\leq \lambda \leq k_1+k_2-2$.

\begin{definition}[Generalized matching number]\label{def-generalized matching number}
	For positive integers $n, t, \lambda, k_1, k_2$, the generalized matching number $m(n,t,\lambda;k_1,k_2)$ is defined to be the maximum size of a family $\mF\subset \binom{[n]}{t}$ such that $\mF$ contains no $(k_1,k_2)$-disjoint collection of size $\lambda$.
\end{definition}

Note that in Definition~\ref{def-disjoint}, when $k_1> \lambda$, ``$k_1$-disjoint'' makes no sense,  thus ``$(k_1,k_2)$-disjoint'' reduces to ``$k_2$-covering''; when $k_2> \lambda$, it reduces to ``$k_1$-disjoint''.
Therefore, under the conditions in Theorems~\ref{mainthm-turan density} and~\ref{mainthm:code}, when $s=1$ or $s=c-1$, we easily get
$m(n,t,\lambda;s+1,c-s+1)=m(n,t,\lambda)$ ($m(k,t,\lambda;s+1,c-s+1)=m(k,t,\lambda)$). Indeed, when $s=1$ (the situation $s=c-1$ is similar), we have $m(n,t,\lambda;s+1,c-s+1)=m(n,t,\lambda;2,c)$. For the case $\lambda<c$, ``$c$-covering'' is undefinable and thus $m(n,t,\lambda;2,c)=m(n,t,\lambda)$; for the case $\lambda=c$, we have $t\lambda=n$ and thus the pairwise-disjoint property implies the $c$-covering property, that is, we also have $m(n,t,\lambda;2,c)=m(n,t,\lambda)$. Therefore, Theorems~\ref{mainthm-turan density} and~\ref{mainthm:code} naturally generalize the corresponding results on frameproof codes \cite{blackburn2003frameproof,liu2024near} and focal-free hypergraphs and codes \cite{huang2024focal}, which appear as the two extreme cases within our unified framework.


Moreover, when $1\le\lambda\le \min\{s,c-s\}$, the $(k_1,k_2)$-disjoint property becomes trivial and then $m(k,t,\lambda; s+1,c-s+1)=0$; in these cases we can prove $f_{c,s}(n,k)\le \binom{n}{t}/ \binom{k}{t}$ with $t=\lceil sk/c\rceil$.
Meanwhile, a similar discussion from \cite{huang2024focal} inspires us that packings provide a cheap asymptotic lower bound for $f_{c,s}(n,k)$. For integers $n>k>t\ge 2$, an \textit{$(n,k,t)$-packing} is a hypergraph $\mathcal{P}\subseteq \binom{[n]}{k}$ such that $|A\cap B|<t$ for every two distinct $A,B\in \mathcal{P}$. Every $(n,k,t)$-packing $\mathcal{P}$ with $t=\lceil sk/c\rceil$ can be checked to be $(c,s)$-frameproof. Indeed, assume otherwise that there exist $A_0,A_1,\ldots,A_c\subseteq\mathcal{P}$ satisfying $sA_0\subseteq A_1\uplus\cdots \uplus A_c$ (the symbol $\uplus$ denotes the multiset union, and $sA_0$ denote the $s$-fold multiset of $A_0$). Then there exists some $i\in [c]$ such that $|A_0\cap A_i|\ge \lceil s|A_0|/c\rceil= \lceil sk/c\rceil$, a contradiction.
A well-known result of R{\"o}dl \cite{rodl1985packing} showed that for any fixed $k,t$ and  large $n$, there exist asymptotically optimal $(n,k,t)$-packings with $(1-o(1))\binom{n}{t}/\binom{k}{t}$ edges, where $o(1)\to 0$ as $n\to\infty$. Together, we have  when $1\le\lambda\le \min\{s,c-s\}, c\mid (sk-\lambda)$, and $t=\lceil sk/c\rceil$,
\[(1-o(1))\binom{n}{t}/\binom{k}{t}\leq f_{c,s}(n,k)\le \binom{n}{t}/ \binom{k}{t}.\]

An $(n,k,t)$-packing $\mathcal{P}\subseteq \binom{[n]}{k}$ of size $\binom{n}{t}/\binom{k}{t}$ is said to be an $(n,k,t)$-\textit{design}. By the known results on the existence of designs (see \cite{colbourn2010crc,delcourt2024refined,glock2023existence,keevash2014existence}), we are able to further determine the exact values of $f_{c,s}(n,k)$ for infinitely many parameters.

\begin{proposition}\label{prop-general-hypergraph-exact value}
	Let $n\ge k\ge 2, c\ge 2$, $1\le s\le c-1,1\le\lambda\le \min\{s,c-s\}, c\mid (sk-\lambda)$, and $t=\lceil sk/c\rceil$. Let $n\ge n_0=\left(\binom{k}{t}- m(k,t,\lambda; s+1,c-s+1)\right)t+ t-1$. If there exists an $(n,k,t)$-design, then $f_{c,s}(n,k)=\binom{n}{t}/\binom{k}{t}$. In particular, for every sufficiently large $n$ that satisfies $\binom{k-i}{t-i}|\binom{n-i}{t-i}$ for every $0\le i\le t-1$, we have
    \[f_{c,s}(n,k)=\binom{n}{t}\bigg/\binom{k}{t}.\]
\end{proposition}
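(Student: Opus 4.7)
The strategy is to sandwich $f_{c,s}(n,k)$ between matching upper and lower bounds of $\binom{n}{t}/\binom{k}{t}$, and then to guarantee the lower bound for concrete $n$ via the existence of $(n,k,t)$-designs.

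For the lower bound I would use the fact recalled immediately before the proposition: any $(n,k,t)$-packing with $t=\lceil sk/c\rceil$ is automatically $(c,s)$-frameproof, because a framing coalition $A_0,A_1,\ldots,A_c$ would, by counting its total coverage of $A_0$ against $sk$, force some index $j\in[c]$ with $|A_0\cap A_j|\ge\lceil sk/c\rceil=t$, violating the packing property. An $(n,k,t)$-design is such a packing and has exactly $\binom{n}{t}/\binom{k}{t}$ edges, so $f_{c,s}(n,k)\ge\binom{n}{t}/\binom{k}{t}$ whenever a design exists.

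For the upper bound I would reuse the extremal inequality $|\mathcal{F}|\cdot\bigl(\binom{k}{t}-m(k,t,\lambda;s+1,c-s+1)\bigr)\le\binom{n}{t}$ that I expect to be established inside the proof of Theorem~\ref{mainthm-turan density} for $n\ge n_0$. Under the hypothesis $1\le\lambda\le\min\{s,c-s\}$, both the $(s+1)$-disjoint and $(c-s+1)$-covering clauses become vacuous on $\lambda$-collections, so as remarked in the excerpt $m(k,t,\lambda;s+1,c-s+1)=0$ and the inequality collapses to the packing bound $|\mathcal{F}|\binom{k}{t}\le\binom{n}{t}$. Together with the lower bound this yields $f_{c,s}(n,k)=\binom{n}{t}/\binom{k}{t}$ whenever an $(n,k,t)$-design exists and $n\ge n_0$.

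The ``sufficiently large $n$'' clause then follows from the Keevash--Glock--K\"uhn--Lo--Osthus existence theorem cited in the excerpt: under the divisibility conditions $\binom{k-i}{t-i}\mid\binom{n-i}{t-i}$ for $0\le i\le t-1$ and $n$ large enough in terms of $k,t$, an $(n,k,t)$-design is guaranteed, and the first part applies. \textbf{Anticipated obstacle.} The delicate step is to verify that the upper bound drawn from the proof of Theorem~\ref{mainthm-turan density} is genuinely tight at the explicit threshold $n_0=(\binom{k}{t}-m)t+t-1$, not merely asymptotically. When $m=0$ this should amount to showing that an extremal $(c,s)$-frameproof family is forced to behave as an $(n,k,t)$-packing, and the choice of $n_0$ is precisely what is needed to build an explicit framing of $A_0$ whenever two members of $\mathcal{F}$ share a $t$-subset: use $s$ copies of the sharing set to cover the overlap and find $c-s$ further witnesses in $\mathcal{F}\setminus\{A_0\}$ to cover the remainder. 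Making this witness-finding work at the stated threshold is where the main bookkeeping lies.
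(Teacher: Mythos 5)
Your proof follows the paper's argument exactly: the lower bound via the packing/design observation from the Introduction, the upper bound by specializing Theorem~\ref{thm-gen-hypergraph-upperbound} (valid for all $n\ge n_0$) with $m(k,t,\lambda;s+1,c-s+1)=0$ when $1\le\lambda\le\min\{s,c-s\}$, and the divisibility clause via the cited design-existence theorems. Your ``anticipated obstacle'' is not a real one---the explicit upper bound is already proved for every $n\ge n_0$ in Theorem~\ref{thm-gen-hypergraph-upperbound}, so no further tightness or structure-of-extremal-families argument is needed.
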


We also obtain some exact results on $f_{c,s}^{q}(n)$ below.

\begin{theorem}\label{thm-general-code-exact value}
    Let $q\ge c\ge 2$, $1\le s\le c-1$ and let $q=p_1^{e_1}\cdots p_m^{e_m}$ be the canonical integer factorization, where $p_1,\ldots,p_m$ are distinct primes, $e_1,\ldots,e_m$ are positive integers, and $p_1^{e_1}<\cdots< p_m^{e_m}$. If $1\le\lambda\le \min\{s,c-s\}, c\mid (sn-\lambda)$ and $\max\{\frac{2c}{c-s},c-\lambda\}\le n\le p_1^{e_1}+1$, then
    \[f_{c,s}^q(n)= q^{\lceil sn/c\rceil}.\]
\end{theorem}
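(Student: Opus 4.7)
The plan is to build a $(c,s)$-frameproof code of size $q^t$ by coupling MDS pieces across the prime-power components of $q$. Since $n \le p_1^{e_1}+1 \le p_i^{e_i}+1$ for every $i\in[m]$, an extended Reed--Solomon construction supplies an $[n,t,n-t+1]$ MDS code $C_i\subseteq \mathbb{F}_{p_i^{e_i}}^n$. After identifying $[q]$ with $\prod_{i=1}^{m}\mathbb{F}_{p_i^{e_i}}$ via the Chinese remainder theorem, take $\mathcal{C}:=\prod_{i}C_i$; then $|\mathcal{C}|=\prod_i(p_i^{e_i})^t=q^t$ and any two distinct codewords of $\mathcal{C}$ disagree in some $\mathbb{F}_{p_i^{e_i}}$-component, hence agree in at most $t-1$ positions of $[n]$. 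A short calculation from $t=\lceil sn/c\rceil$ gives $c(t-1)<sn$, so for any $\bm x^0\in\mathcal{C}$ and any $\bm x^1,\ldots,\bm x^c\in\mathcal{C}\setminus\{\bm x^0\}$ the total agreement $\sum_j|A(\bm x^0,\bm x^j)|\le c(t-1)<sn$ forces some coordinate to be covered by fewer than $s$ of the $\bm x^j$, as required.

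\textbf{Upper bound.} I would argue the contrapositive: if $|\mathcal{C}|>q^t$ then $\mathcal{C}$ is not $(c,s)$-frameproof. The Singleton bound first supplies distinct $\bm x,\bm y\in\mathcal{C}$ with $|A(\bm x,\bm y)|\ge t$; set $D:=[n]\setminus A(\bm x,\bm y)$, so $|D|\le n-t$. A framing of $\bm x$ is then produced by taking an appropriate multiplicity $a$ of copies of $\bm y$ (with $a\in\{c-s,s\}$ depending on whether $s\le c-s$ or not) and selecting $c-a$ auxiliary codewords from $\mathcal{C}\setminus\{\bm x\}$ whose agreements with $\bm x$ together $s$-cover $D$, supplementing the coverage of $A(\bm x,\bm y)$ when $a<s$. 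The auxiliaries should be produced by a pigeonhole / fiber-counting argument on the projection $\pi_D$: the size hypothesis $|\mathcal{C}|>q^t$ ensures the fiber above $\bm x|_D$ is large, while the inequalities $n\ge 2c/(c-s)$ and $n\ge c-\lambda$ supply the arithmetic slack (through $|D|\ge c-\lambda$ and a lower bound on $n-t$) needed to assemble the covering greedily.

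\textbf{Main obstacle.} The delicate step is unquestionably the auxiliary-covering part of the upper bound: choosing the $c-a$ extra codewords from $\mathcal{C}$ so that their agreements with $\bm x$ multiply cover $D$ to depth $s$. The regime $s>c/2$, in which fewer than $s$ auxiliary slots are available, is the tightest, and this is where the hypothesis $n\ge 2c/(c-s)$ becomes binding and the auxiliaries must be chosen with genuinely diverse disagreement patterns rather than as a single repeated codeword. Note that the condition $\lambda\le\min\{s,c-s\}$ already forces $m(n,t,\lambda;s+1,c-s+1)=0$, so the asymptotic ratio $\lim_{q\to\infty}f_{c,s}^q(n)/q^t=1$ given by Theorem~\ref{mainthm:code} matches the target $q^t$; the substance of this proposition is to upgrade that asymptotic statement to an exact equality at every admissible $q$.
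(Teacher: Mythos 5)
Your lower bound is essentially the paper's: the product of extended Reed--Solomon codes over the prime-power factors of $q$ (which is exactly what the cited orthogonal-array constructions deliver under $n\le p_1^{e_1}+1$) yields $q^t$ codewords with pairwise agreement at most $t-1$, and your computation $\sum_j|I(\bm x^0,\bm x^j)|\le c(t-1)=sn-\lambda<sn$ is precisely the paper's Lemma~\ref{lem-mindistance} (minimum distance exceeding $\lfloor\frac{c-s}{c}n\rfloor$ implies $(c,s)$-frameproof). That half is fine.

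The upper bound, however, has a genuine gap at exactly the step you flag as delicate, and the paper avoids it by arguing globally rather than locally. Your plan starts from a single pair $\bm x,\bm y$ with $|I(\bm x,\bm y)|\ge t$ and then needs $c-a$ auxiliary codewords whose agreement sets with the fixed $\bm x$ jointly $s$-cover $D=[n]\setminus I(\bm x,\bm y)$. Nothing in the hypothesis $|\mathcal C|>q^t$ produces these: the pigeonhole on $\pi_D$ only shows that \emph{some} fiber of \emph{some} projection is large, not the fiber anchored at $\bm x|_D$, so the required auxiliaries need not exist for the particular $\bm x$ you chose; and when $s>c-s$ the $c-s$ remaining slots cannot cover $D$ to depth $s$ at all unless the auxiliaries simultaneously re-cover $I(\bm x,\bm y)$ to make up the deficit $s-a$ there, a double bookkeeping your sketch does not resolve. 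The paper's Theorem~\ref{thm-exactvalue-upperbound} instead sets $U_S=\{\bm x\in\mathcal C:\bm x_S\text{ is an own subsequence}\}$, notes that the frameproof property forces $\mathcal C=U_{T_1}\cup\cdots\cup U_{T_c}$ for \emph{every} $s$-fold partition $s[n]=T_1\uplus\cdots\uplus T_c$, takes a $(t-1)$-set $S$ maximizing $|U_S|$, bounds $|U_{[t]}|\le q^t-(q-1)|U_{[t-1]}|$ by a counting argument, and then pigeonholes over the $c-\lambda$ blocks of size $t-1$ (using $q>c-\lambda$, guaranteed by $q\ge c$) to contradict the maximality of $|U_S|$ whenever $|\mathcal C|\ge q^t+1$. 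This extremal argument never needs to exhibit an explicit focal configuration, which is what makes it go through; to salvage your route you would have to prove an existence statement for the auxiliary covering that is at least as strong as the paper's global inequality, and the fiber-counting you propose does not supply it.
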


\subsection{Outline of paper}

The rest of this paper is organized as follows. Sections~\ref{section-hypergraphs} and \ref{section-codes} are devoted to bounds on quantitative frameproof hypergraphs and codes, respectively.
For quantitative frameproof hypergraphs, the upper and lower bounds  in Theorem~\ref{mainthm-turan density} are proved in Subsections~\ref{subsection-hypergraphs-upper} and  \ref{subsection-hypergraphs-lower}, respectively.
For quantitative frameproof codes, we prove asymptotically tight upper and lower bounds in Subsections~\ref{subsection_code_bound}, thereby proving Theorem~\ref{mainthm:code}.
In Section~\ref{section-estimate}, we estimate the generalized matching number.
Finally, we give concluding remarks in Section~\ref{section-conclusion}.

\section{Quantitative frameproof hypergraphs}\label{section-hypergraphs}
Before giving our proofs, we define a small hypergraph/code which violates the $(c,s)$-frameproof property. A family of $c+1$ sets $A_0,A_1,\ldots,A_c\subset 2^{[n]}$, where $A_0\neq A_j$ for each $j\in [c]$ but $A_1,\ldots,A_c$ need not be distinct, is said to be a {\it $(c,s)$-focal hypergraph} with focus $A_0$ if for every $x\in A_0$, we have $|\{j\in [c]:x\in A_j\}|\ge s$. Similarly, a family of $c+1$ vectors $\bm x^0,\bm x^1,\ldots,\bm x^c\subset [q]^n$, where $\bm x^0\neq \bm x^j$ for each $j\in [c]$ but $\bm x^1,\ldots,\bm x^c$ need not be distinct, is said to be a {\it $(c,s)$-focal code} with focus $\bm x^0$ if for every coordinate $i\in [n]$, we have $|\{j\in [c]:x_i^j= x_i^0\}|\ge s$.
Clearly, a hypergraph $\mathcal{F}\subset \binom{[n]}{k}$ is $(c,s)$-frameproof if it does not contain any $(c,s)$-focal hypergraph; a code $\mathcal{C}\subset [q]^n$ is $(c,s)$-frameproof if it does not contain any $(c,s)$-focal code.

In this section, we will present the proof of Theorem \ref{mainthm-turan density}. Our proof follows the idea in \cite{huang2024focal}, where a key definition ``own subset'' is needed.  
Given a hypergraph $\mathcal{F}\subseteq 2^{[n]}$  and an edge $A\in \mathcal{F}$, a subset $T\subseteq A$ is called an \textit{own subset} of $A$ (with respect to $\mathcal{F}$) if for every $B\in \mathcal{F}\backslash \{A\}$, we have $T\nsubseteq B$; otherwise, $T$ is called a \textit{non-own subset} of $A$. We have the following useful observations.


\begin{observation}\label{obs-own subset}
    Let $\mathcal{F}$ be a hypergraph. Then the following hold:
    \begin{enumerate}[(i)]
        \item If $A\in \mathcal{F}$ admits $sA\subseteq T_1\uplus\cdots\uplus T_c$ such that for each $i\in [c]$, $T_i$ is a non-own subset of $A$, then $\mathcal{F}$ contains a $(c,s)$-focal hypergraph with focus $A$.

        \item If $A,A_1,\ldots, A_c\in \mathcal{F}$ form a $(c,s)$-focal hypergraph with focus $A$ such that $sA=A_1\uplus\cdots\uplus A_c$, then the $c$ members $A_1,\ldots,A_c$ are $(s+1,c-s+1)$-disjoint subsets of $A$.
    \end{enumerate}
\end{observation}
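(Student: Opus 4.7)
The two items are almost immediate from unwinding the definitions, so the plan is mostly bookkeeping.

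For part (i), the plan is to build the focal hypergraph directly from the non-own witnesses. For each $i \in [c]$, the hypothesis that $T_i$ is a non-own subset of $A$ gives some $B_i \in \mathcal{F}\setminus\{A\}$ with $T_i \subseteq B_i$; pick one such $B_i$ for each $i$. I claim $A, B_1,\ldots,B_c$ is the desired $(c,s)$-focal hypergraph with focus $A$. Indeed, for any $x \in A$, the multiset inclusion $sA \subseteq T_1 \uplus \cdots \uplus T_c$ means $x$ appears at least $s$ times on the right, i.e.\ $|\{i \in [c] : x \in T_i\}| \ge s$, and each such $i$ contributes $x \in B_i$. Thus $|\{i : x \in B_i\}| \ge s$, which is exactly the focal condition. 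The $B_i$ need not be distinct, which is fine since the definition of $(c,s)$-focal hypergraph allows repetition among $A_1,\ldots,A_c$.

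For part (ii), the plan is to convert the multiset equality $sA = A_1 \uplus \cdots \uplus A_c$ into the pointwise statement ``every $x \in A$ lies in exactly $s$ of the $A_i$'s'' and then quote the equivalent characterization of $(k_1,k_2)$-disjointness stated immediately after Definition~\ref{def-disjoint}. The multiset identity forces each $A_i \subseteq A$ (any element outside $A$ would contribute to the right side but not the left), so we may view $A_1,\ldots,A_c$ as repeatable subsets of $A$. Counting the multiplicity of a point $x \in A$ on both sides gives $|\{i \in [c] : x \in A_i\}| = s$. The paper's equivalent form says $A_1,\ldots,A_\lambda$ is $(k_1,k_2)$-disjoint iff each point lies in between $\lambda - k_2 + 1$ and $k_1 - 1$ of the sets; substituting $\lambda = c$, $k_1 = s+1$, $k_2 = c-s+1$ yields the bounds $s \le |\{i : x \in A_i\}| \le s$, which we just verified. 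Hence $A_1,\ldots,A_c$ are $(s+1, c-s+1)$-disjoint subsets of $A$.

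There is no real obstacle: (i) is a one-line selection of witnesses plus a counting check, and (ii) is a direct translation of a multiset identity through the equivalent description of $(k_1,k_2)$-disjointness. If anything, the only care needed is to verify, in (ii), that nothing outside $A$ can appear in any $A_i$, which falls out of the multiset equality being an equality rather than an inclusion.
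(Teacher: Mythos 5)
Your proof is correct: part (i) is exactly the intended selection of witnesses $B_i\supseteq T_i$ from $\mathcal{F}\setminus\{A\}$ followed by the multiplicity count, and part (ii) correctly reads off that each point of $A$ lies in exactly $s$ of the $A_i$'s and matches this against the equivalent characterization of $(s+1,c-s+1)$-disjointness. The paper states this Observation without proof, and your argument is precisely the routine unwinding of definitions the authors omit, including the small but necessary check that the multiset equality forces $A_i\subseteq A$.
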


We remark that Observation \ref{obs-own subset} (i) and (ii) will be used in the proofs of the upper and lower bounds in Theorem \ref{mainthm-turan density}, respectively. The following fact is also useful in the proofs. For fixed $c,s,k$ with $1\le s\le c-1$ and $t=\lceil sk/c\rceil$,
the number $\lambda\in [c]$ satisfies $\lambda\equiv sk \pmod{c}$ if and only if
    \begin{align}\label{eq-lambda}
        \lambda t+ (c-\lambda)(t-1)= sk.
    \end{align}

\subsection{The upper bounds of $f_{c,s}(n,k)$}\label{subsection-hypergraphs-upper}

In this subsection, we  prove the following upper bound in Theorem \ref{mainthm-turan density}.

\begin{theorem}\label{thm-gen-hypergraph-upperbound}
    For  $n\ge k\ge 2$ and $c\ge 2,1\le s\le c-1$, let $t=\lceil sk/c\rceil$. When $n\ge n_0= \left(\binom{k}{t}- m(k,t,\lambda; s+1,c-s+1)\right)t+ t-1$, we have
    \begin{align}\label{eq-gen-hypergraph-upperbound}
        f_{c,s}(n,k)
        \le
        \frac{
        \binom{n}{t}}
        {\binom{k}{t}- m(k,t,\lambda; s+1,c-s+1)},
    \end{align}
    where $\lambda\in [c]$ satisfies $\lambda\equiv sk \pmod{c}$.
\end{theorem}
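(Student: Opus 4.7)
The strategy is to lower-bound, for each edge $A\in\mathcal{F}$, the number of own $t$-subsets of $A$, and then conclude by double counting. Writing $\mathcal{O}(A)$ for the own $t$-subsets of $A$ and $\mathcal{N}(A):=\binom{A}{t}\setminus\mathcal{O}(A)$ for the non-own ones, we have the trivial inequality $\sum_{A\in\mathcal{F}}|\mathcal{O}(A)|\le \binom{n}{t}$, since each $t$-subset of $[n]$ can be own for at most one edge. It therefore suffices to prove the pointwise lower bound $|\mathcal{O}(A)|\ge \binom{k}{t}-m(k,t,\lambda;s+1,c-s+1)$ for every $A\in\mathcal{F}$.

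I would prove this pointwise bound by contradiction. Suppose $|\mathcal{N}(A)|>m(k,t,\lambda;s+1,c-s+1)$ for some $A\in \mathcal{F}$. Viewing $\mathcal{N}(A)$ as a $t$-uniform family on the $k$-set $A$, Definition~\ref{def-generalized matching number} provides an $(s+1,c-s+1)$-disjoint sub-collection $T_1,\ldots,T_\lambda\in\mathcal{N}(A)$. By the reformulation of the $(s+1,c-s+1)$-disjoint condition given after Definition~\ref{def-disjoint}, each vertex $x\in A$ lies in $\alpha_x$ of the $T_i$'s with $\lambda-(c-s)\le\alpha_x\le s$, so the deficit $d_x:=s-\alpha_x$ lies in $[0,c-\lambda]$ and $\sum_{x\in A}d_x = sk-\lambda t = (c-\lambda)(t-1)$ by equation~(\ref{eq-lambda}).

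The heart of the argument, and its main technical obstacle, is to extend $T_1,\ldots,T_\lambda$ to a collection of $c$ non-own subsets of $A$ whose multiset union contains $sA$. A natural attempt is to add $c-\lambda$ further subsets $S_1,\ldots,S_{c-\lambda}$ of size $t-1$ such that each $x\in A$ lies in exactly $d_x$ of them, and to select each $S_j$ as a $(t-1)$-subset of some $T_i$, in which case the same edge $B\in\mathcal{F}\setminus\{A\}$ with $T_i\subseteq B$ certifies that $S_j$ is non-own. Feasibility of the underlying bipartite degree realization follows from a Gale-Ryser-type argument since $d_x\le c-\lambda$ and $\sum_x d_x=(c-\lambda)(t-1)$; the hypothesis $n\ge n_0=(\binom{k}{t}-m(k,t,\lambda;s+1,c-s+1))t+t-1$ supplies the combinatorial room needed to perform this construction.

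Once $T_1,\ldots,T_\lambda,S_1,\ldots,S_{c-\lambda}$ are $c$ non-own subsets of $A$ with $sA\subseteq T_1\uplus\cdots\uplus T_\lambda\uplus S_1\uplus\cdots\uplus S_{c-\lambda}$, Observation~\ref{obs-own subset}(i) yields a $(c,s)$-focal hypergraph in $\mathcal{F}$ with focus $A$, contradicting the $(c,s)$-frameproof hypothesis. This establishes the pointwise bound $|\mathcal{O}(A)|\ge\binom{k}{t}-m(k,t,\lambda;s+1,c-s+1)$, and combining with $\sum_{A\in\mathcal{F}}|\mathcal{O}(A)|\le\binom{n}{t}$ yields the desired inequality~(\ref{eq-gen-hypergraph-upperbound}).
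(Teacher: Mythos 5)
Your overall frame (own-subset counting plus the extension of an $(s+1,c-s+1)$-disjoint collection to a $c$-term multiset decomposition of $sA$) matches the paper, but the proposal has a genuine gap: the pointwise bound $|\mathcal{O}(A)|\ge\binom{k}{t}-m(k,t,\lambda;s+1,c-s+1)$ for \emph{every} $A\in\mathcal{F}$ is false, and your mechanism for certifying the added $(t-1)$-sets as non-own does not work. Concretely, take $c=4$, $s=2$, $k=5$, so $t=3$, $\lambda=2=\min\{s,c-s\}$ and $m(k,t,\lambda;3,3)=0$; let $\mathcal{F}=\{A,B\}$ with $|A\cap B|=3$. This $\mathcal{F}$ is $(4,2)$-frameproof (the only admissible choice is $A_1=\cdots=A_4=B$, and any vertex of $A\setminus B$ then has multiplicity $0<s$), yet $A\cap B$ is a non-own $3$-subset of $A$, so $|\mathcal{O}(A)|=\binom{5}{3}-1$, violating your pointwise claim. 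The failure traces to the extension step: the degree constraints force each added set $S_j$ to consist of vertices $x$ with positive deficit $d_x$, and these can lie entirely \emph{outside} every $T_i$ (in the example $S_1=S_2=A\setminus(A\cap B)$), so $S_j$ cannot be taken as a subset of some $T_i$ and inherits no non-ownness certificate; if $S_j$ happens to be an own $(t-1)$-subset, Observation~\ref{obs-own subset}(i) does not apply. Also, the hypothesis $n\ge n_0$ plays no role in this construction, which lives entirely inside the $k$-set $A$.

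The paper repairs exactly this point by splitting $\mathcal{F}$ into $\mathcal{F}_0$ (edges with \emph{no} own $(t-1)$-subset, for which every constructed $(t-1)$-set is automatically non-own, so the pointwise bound does hold) and $\mathcal{F}_1=\mathcal{F}\setminus\mathcal{F}_0$. Edges of $\mathcal{F}_1$ are charged differently: each own $(t-1)$-subset generates $n-t+1$ distinct $t$-sets, each such $t$-set arises from at most $t$ own $(t-1)$-subsets, and these $t$-sets are disjoint from the own $t$-subsets counted for $\mathcal{F}_0$. This gives $\binom{n}{t}\ge|\mathcal{F}_0|\bigl(\binom{k}{t}-m\bigr)+|\mathcal{F}_1|\frac{n-t+1}{t}$, and the threshold $n_0$ is used precisely here, to guarantee $\frac{n-t+1}{t}\ge\binom{k}{t}-m$. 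You would need to incorporate this dichotomy (or an equivalent device) to close the gap.
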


First, we give the following useful lemma, which is similar to \cite[Lemma 2.3]{huang2024focal} but more details involved.

\begin{lemma}\label{lem-gen-hypergraph-ownsubset}
    Let $n,k$ and $c,s$ be integers with $n\ge k\ge 2$ and $c\ge 2, 1\le s\le c-1$. Let $\mathcal{F}\subseteq\binom{[n]}{k}$ be a $(c,s)$-frameproof hypergraph and $\mathcal{F}_0= \{A\in \mathcal{F}: A \text{ has no own $(t-1)$-subsets with respect to } \mathcal{F}\}$, where $t=\lceil sk/c\rceil$. Then every $A\in \mathcal{F}_0$ contains at least $\binom{k}{t}-m(k,t,\lambda; s+1,c-s+1)$ own $t$-subsets with respect to $\mathcal{F}$.
\end{lemma}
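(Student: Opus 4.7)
The plan is to argue by contradiction: suppose some $A\in\mathcal{F}_0$ has strictly more than $m(k,t,\lambda;s+1,c-s+1)$ non-own $t$-subsets; I will exhibit a $(c,s)$-focal hypergraph in $\mathcal{F}$ with focus $A$, violating the $(c,s)$-frameproof property. The engine for this contradiction is Observation~\ref{obs-own subset}(i), so the whole problem reduces to finding $c$ non-own subsets of $A$ whose multiset union equals $sA$.

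First, I would identify $A$ with $[k]$ and view the family of non-own $t$-subsets of $A$ as a subfamily of $\binom{[k]}{t}$. By Definition~\ref{def-generalized matching number}, the cardinality assumption forces this subfamily to contain a $(s+1,c-s+1)$-disjoint collection $T_1,\ldots,T_\lambda$. The equivalent reformulation stated below Definition~\ref{def-disjoint} then tells us that every $x\in A$ lies in between $\lambda-c+s$ and $s$ of the $T_j$'s.

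Next, I would produce $c-\lambda$ extra subsets $T_{\lambda+1},\ldots,T_c\subseteq A$, each of size $t-1$, so that $T_1\uplus\cdots\uplus T_c=sA$ as multisets. For $x\in A$, let $n_x$ be the number of indices $j\leq\lambda$ with $x\in T_j$, and set $d_x:=s-n_x\in[0,c-\lambda]$. Equation~\eqref{eq-lambda} gives
\begin{align*}
\sum_{x\in A} d_x = sk-\lambda t = (c-\lambda)(t-1),
\end{align*}
so realizing the deficit vector $(d_x)_{x\in A}$ by the desired $(t-1)$-subsets amounts to finding a $\{0,1\}$-matrix of shape $k\times(c-\lambda)$ with row sums $d_x$ and uniform column sums $t-1$. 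Because $d_x\leq c-\lambda$ for every $x$, the Gale--Ryser inequalities hold immediately; equivalently, a round-robin allocation (list the $d_x$ copies of each $x$ consecutively and fill the $c-\lambda$ new sets cyclically) produces such a matrix explicitly. The resulting $(t-1)$-subsets of $A$ are automatically non-own, since $A\in\mathcal{F}_0$ has no own $(t-1)$-subsets.

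Combining these two steps, $T_1,\ldots,T_c$ are $c$ non-own subsets of $A$ with $T_1\uplus\cdots\uplus T_c=sA$, and Observation~\ref{obs-own subset}(i) delivers the promised $(c,s)$-focal hypergraph with focus $A$ inside $\mathcal{F}$, contradicting the $(c,s)$-frameproof property. I expect the main technical hurdle to be the deficit-realization step: while the bound $d_x\leq c-\lambda$ makes the row-sum side comfortable, one must still ensure that every $T_{\lambda+j}$ is an honest $(t-1)$-set (no repeated elements) while hitting the uniform column sum exactly, which is where the round-robin construction and the multiplicity bounds coming from $(s+1,c-s+1)$-disjointness need to be combined carefully.
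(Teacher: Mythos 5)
Your proposal is correct and follows essentially the same route as the paper: extract an $(s+1,c-s+1)$-disjoint collection $T_1,\ldots,T_\lambda$ from the non-own $t$-subsets, complete it with $(t-1)$-sets to an exact $s$-fold cover of $A$ (which are automatically non-own since $A\in\mathcal{F}_0$), and invoke Observation~\ref{obs-own subset}(i) for the contradiction. The only difference is cosmetic: the paper verifies the completion step (its Claim~\ref{claim-disjoint}) by an explicit greedy partition of the deficit multiset, whereas you realize the deficit vector via Gale--Ryser, which is equally valid given $0\le d_x\le c-\lambda$ and $\sum_x d_x=(c-\lambda)(t-1)$.
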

\begin{proof}
    It suffices to show that each $A\in \mathcal{F}_0$ has at most $m(k,t,\lambda; s+1,c-s+1)$ non-own $t$-subsets. Assume that there exists some $A\in \mathcal{F}_0$ that contains at least $m(k,t,\lambda; s+1,c-s+1)+1$ non-own $t$-subsets. Let $\mathcal{T}_A\subset \binom{A}{t}$ be the set of all non-own $t$-subsets of $A$, then $|\mathcal{T}_A|\geq m(k,t,\lambda; s+1,c-s+1)+1$. So
$\mathcal{T}_A$ contains an $(s+1,c-s+1)$-disjoint collection of $\lambda$ repeatable subsets $T_1,T_2,\ldots,T_\lambda$ of $A$. In order to apply Observation \ref{obs-own subset} (i), we need the following claim.

\begin{claim}\label{claim-disjoint}
        If $T_1,\ldots,T_\lambda$ are $(s+1,c-s+1)$-disjoint subsets of $A$ of size $t$, then we can find subsets $T_{\lambda+1},\ldots,T_c$ of size $t-1$ such that
        \[sA= T_1\uplus T_2\uplus \cdots \uplus T_c.\]
    \end{claim}
    \begin{proof}[Proof of Claim] Since $T_1,\ldots,T_\lambda$ are $(s+1,c-s+1)$-disjoint subsets of $A$, each element of $A$ appears at least $s-(c-\lambda)$ times and at most $s$ times in these subsets.  By the equation $sk= \lambda t+ (c-\lambda)(t-1)$ in (\ref{eq-lambda}), the multiset $E\triangleq kA\setminus (T_1\uplus T_2\uplus \cdots \uplus T_\lambda)$ is of size $(c-\lambda)(t-1)$. Then it suffices to partition $E$ into $c-\lambda$ parts $T_{\lambda+1},\ldots,T_c$ such that each $T_i$ is a set of size $t-1$, $i\in [\lambda+1,c]$. We show that this can be done in a greedy way below.

   Note that each element of $A$ appears in at most $c-\lambda$ times in $E$. For $i\in [c-\lambda]$, let $E_i\subset A$ be the set of elements appearing at least $i$ times in $E$. Then $E_1\supseteq E_2\supseteq\cdots \supseteq E_{c-\lambda}$ and  $E=E_1\uplus \cdots \uplus E_{c-\lambda}$. Since $\sum_{i=1}^{c-\lambda}|E_i|=(c-\lambda)(t-1)$, we have $|E_1|\ge t-1$ and $|E_{c-\lambda}|\le t-1$. In the first step, we construct the $(t-1)$-set $T_{\lambda+1}$. We move all elements from $E_{c-\lambda}$ into it, and move all elements from $E_{c-\lambda-1}$ that have not appeared yet in $T_{\lambda+1}$, and continue moving the elements from $E_{c-\lambda-2}$ that have not appeared, and so on, until the number of elements in $T_{\lambda+1}$ reaches $t-1$. After this step, $E_{c-\lambda}=\emptyset$, $E_1\supseteq E_2\supseteq\cdots \supseteq E_{c-\lambda-1}$ and $\sum_{i=1}^{c-\lambda-1}|E_i|=(c-\lambda-1)(t-1)$, and thus $|E_1|\ge t-1, |E_{c-\lambda-1}|\le t-1$. Then in the second step, we can similarly construct $T_{\lambda+2}$ by  moving all elements from $E_{c-\lambda-1}$, and different elements from $E_{c-\lambda-2}, \ldots$ until $t-1$ elements are moved. Iteratively, we can construct all subsets $T_{\lambda+1},\ldots,T_c$ satisfying the required condition.    \end{proof}

 By the claim, we can find subsets $T_{\lambda+1},T_{\lambda+2},\ldots,T_c\subseteq A$ of size $t-1$ such that
$sA= T_1\uplus T_2\uplus \cdots \uplus T_c$.
By the definition of $\mathcal{T}_A$ and that $A\in \mathcal{F}_0$, all $T_i, i\in [c]$ are non-own subsets of $A$. Therefore, from Observation \ref{obs-own subset} (i),  $\mathcal{F}$ contains a $(c,s)$-focal hypergraph with focus $A$, a contradiction.
\end{proof}

The above lemma shows that in a quantitative frameproof hypergraph, some edges contain many own $t$-subsets. As own $t$-subsets are exclusive and of number at most $\binom{n}{t}$, this contributes to upper bounding $f_{c,s}(n,k)$ as follows.

\begin{proof}[Proof of  Theorem \ref{thm-gen-hypergraph-upperbound}]
Suppose that $\mathcal{F}\in \binom{[n]}{k}$ is a $(c,s)$-frameproof hypergraph. Let $\mathcal{F}_0= \{A\in \mathcal{F}: A \text{ has no own $(t-1)$-subsets with respect to } \mathcal{F}\}$ as in Lemma \ref{lem-gen-hypergraph-ownsubset} and let $\mathcal{F}_1= \mathcal{F}\backslash\mathcal{F}_0$. Let
\[\mathcal{T}_0:= \{T\in \binom{[n]}{t}: T \text{ is an own $t$-subset of some edge in $\mathcal{F}_0$ with respect to } \mathcal{F}\},\]
and let
\[\mathcal{T}_1:= \{T\in \binom{[n]}{t}: T \text{ contains one own $(t-1)$-subset of some edge in } \mathcal{F}_1 \}.\]
By definition of own subsets, $\mathcal{T}_0$ and $\mathcal{T}_1$ are disjoint, and thus $|\mathcal{T}_0|+|\mathcal{T}_1|\le \binom{n}{t}$.
By Lemma \ref{lem-gen-hypergraph-ownsubset},
\begin{align}\label{eq-T0}
    |\mathcal{T}_0|\ge| \mathcal{F}_0|\left(\binom{k}{t}-m(k,t,\lambda; s+1,c-s+1)\right).
\end{align}
For $|\mathcal{T}_1|$, each edge in $\mathcal{F}_1$ contains at least one own $(t-1)$-subset, and the latter generates $n-t+1$ sets of $\mathcal{T}_1$. Meanwhile, each set of $\mathcal{T}_1$ is generated by at most $t$ own $(t-1)$-subsets, which implies that
\begin{align}\label{eq-T1}
    |\mathcal{T}_1|\ge |\mathcal{F}_1|\frac{n-t+1}{t}.
\end{align}

Combining (\ref{eq-T0}) and (\ref{eq-T1}) yields that
\begin{align*}
    \binom{n}{t}&\ge |\mathcal{T}_0|+|\mathcal{T}_1|
    \ge |\mathcal{F}_0|\left(\binom{k}{t}-m(k,t,\lambda; s+1,c-s+1)\right)+ |\mathcal{F}_1|\frac{n-t+1}{t}\\
    &\ge (|\mathcal{F}_0|+|\mathcal{F}_1|)\left(\binom{k}{t}-m(k,t,\lambda; s+1,c-s+1)\right)\\
    &= |\mathcal{F}|\left(\binom{k}{t}-m(k,t,\lambda; s+1,c-s+1)\right)
\end{align*}
as needed, where the last inequality holds when $n\ge \left(\binom{k}{t}- m(k,t,\lambda; s+1,c-s+1)\right)t+ t-1$.
\end{proof}


When $1\le\lambda\le \min\{s,c-s\}$, we have $m(k,t,\lambda; s+1,c-s+1)=0$ and the upper bound (\ref{eq-gen-hypergraph-upperbound}) becomes $\binom{n}{t}/\binom{k}{t}$. As mentioned in Introduction,  this upper bound can be achieved whenever an $(n,k,t)$-design exists with $t=\lceil sk/c\rceil$. Thus Proposition \ref{prop-general-hypergraph-exact value} follows by existence results of designs whenever $n$ is large enough and satisfies $\binom{k-i}{t-i}|\binom{n-i}{t-i}$ for every $0\le i\le t-1$  \cite{delcourt2024refined,glock2023existence,keevash2014existence}.

%

\subsection{The lower bounds of $f_{c,s}(n,k)$}\label{subsection-hypergraphs-lower}

In this subsection, we present a lower bound for $f_{c,s}(n,k)$ which is asymptotically optimal.

\begin{theorem}\label{thm-gen-hypergraph-lowerbound}
    For  $k,c\ge 2$ and $1\le s\le c-1$, let $t=\lceil sk/c\rceil$. Then we have
    \begin{align}
        f_{c,s}(n,k)\ge (1-o(1))\frac{\binom{n}{t}} {\binom{k}{t}- m(k,t,\lambda; s+1,c-s+1)},
    \end{align}
    where $\lambda\in [c]$ satisfies $\lambda\equiv sk \pmod{c}$ and $o(1)\to 0$ as $n\to \infty$.
\end{theorem}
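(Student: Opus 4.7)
The plan is to prove the lower bound by constructing $\mathcal{F}\subseteq\binom{[n]}{k}$ of size $(1-o(1))\binom{n}{t}/|\mathcal{H}^*|$ via a structured nearly-perfect packing, where $\mathcal{G}^*\subseteq\binom{[k]}{t}$ is a fixed extremal family witnessing $m(k,t,\lambda;s+1,c-s+1)$ and $\mathcal{H}^*:=\binom{[k]}{t}\setminus\mathcal{G}^*$. The target is a family of $k$-sets equipped with bijections $\phi_A:A\to[k]$ for every $A\in\mathcal{F}$ so that the pulled-back collections $\mathcal{H}_A:=\phi_A^{-1}(\mathcal{H}^*)$ form a near-partition of $\binom{[n]}{t}$ and, moreover, each $T\in\mathcal{H}_A$ is \emph{genuinely own} to $A$ (contained in no other edge of $\mathcal{F}$). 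This immediately forces $|\mathcal{F}|\ge(1-o(1))\binom{n}{t}/|\mathcal{H}^*|$, matching the target.

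For the construction, form an $|\mathcal{H}^*|$-uniform auxiliary hypergraph $\mathcal{K}$ on vertex set $\binom{[n]}{t}$ whose edges are the sets $\{\phi^{-1}(H):H\in\mathcal{H}^*\}$, indexed by all pairs $(A,\phi)$ with $A\in\binom{[n]}{k}$ and $\phi:A\to[k]$ a bijection. A routine count gives nearly uniform vertex-degree $\Theta_{k,t}(n^{k-t})$ and pair-codegree $O_{k,t}(n^{k-t-1})$, so the R\"odl--Pippenger--Spencer nibble theorem produces an almost-perfect matching $\mathcal{M}$ of size $(1-o(1))\binom{n}{t}/|\mathcal{H}^*|$. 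Setting $\mathcal{F}:=\{A:(A,\phi)\in\mathcal{M}\}$ delivers the desired cardinality and guarantees the $\mathcal{H}_A$'s are pairwise disjoint. To upgrade this to genuine ownership --- i.e., $\binom{A\cap B}{t}\subseteq\mathcal{G}_A\cap\mathcal{G}_B$ for every pair of distinct matched $A,B$ (where $\mathcal{G}_A:=\phi_A^{-1}(\mathcal{G}^*)$) --- we either run a conflict-enhanced variant of the nibble with a conflict system encoding the forbidden pairs, or delete an $o(1)$-fraction of offending edges.

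To verify that $\mathcal{F}$ is $(c,s)$-frameproof, suppose toward contradiction that $A_0,A_1,\ldots,A_c\in\mathcal{F}$ form a $(c,s)$-focal hypergraph with focus $A_0$. For each $x\in A_0$ pick an $s$-subset $J(x)\subseteq\{i\in[c]:x\in A_i\}$ (possible by the focal condition) and set $\widetilde{A}_i:=\{x\in A_0:i\in J(x)\}\subseteq A_0\cap A_i$; the resulting $\widetilde{A}_1,\ldots,\widetilde{A}_c$ are $(s+1,c-s+1)$-disjoint in $A_0$ with total size $sk=\lambda t+(c-\lambda)(t-1)$ by~\eqref{eq-lambda}. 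A greedy rebalancing argument in the spirit of the proof of Claim~\ref{claim-disjoint} exchanges elements between overweight and underweight $\widetilde{A}_i$'s --- using the slack $|\{i:x\in A_i\}|\ge s$ whenever $x$ lies in more than $s$ of the $A_i$ --- to produce $(s+1,c-s+1)$-disjoint $t$-subsets $T_1,\ldots,T_\lambda\subseteq A_0$ with each $T_j\subseteq A_{i_j}$ for some $i_j\in[c]$. Genuine ownership then forces $T_j\notin\mathcal{H}_{A_0}$, so $\phi_{A_0}(T_j)\in\mathcal{G}^*$ for every $j$; thus $\phi_{A_0}(T_1),\ldots,\phi_{A_0}(T_\lambda)$ is an $(s+1,c-s+1)$-disjoint collection of size $\lambda$ inside $\mathcal{G}^*$, contradicting the choice of $\mathcal{G}^*$.

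The main obstacle I anticipate is the greedy rebalancing step: the exchange moves must simultaneously preserve the $(s+1,c-s+1)$-disjoint property and the containment constraints $T_j\subseteq A_{i_j}$, while forcing the sizes $|T_j|$ to be exactly $t$. I expect this to reduce to a Hall/Gale--Ryser-type realization in the bipartite graph between $A_0$ and $[c]$ recording the incidences $x\in A_i$, with the focal slack providing the required flexibility. A secondary technical point is the genuine-ownership upgrade of the nibble matching, which should be handled routinely via a standard conflict-free nibble or post-deletion argument.
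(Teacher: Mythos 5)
Your overall strategy --- pack near-optimally many copies of $\mathcal{H}^*=\binom{[k]}{t}\setminus\mathcal{G}^*$ into $\binom{[n]}{t}$ and take the vertex sets of the copies as $\mathcal{F}$ --- is exactly the paper's, which simply invokes the Frankl--F\"uredi induced packing theorem (Lemma \ref{lem-induced packing}). The gap is in what you demand of the packing. Your ``genuine ownership'' condition $\binom{A\cap B}{t}\subseteq\mathcal{G}_A\cap\mathcal{G}_B$ does \emph{not} bound $|A\cap B|$, whereas the paper's verification leans crucially on property (iv) of Definition \ref{def-hypergraph-packing}, namely $|V(\mathcal{F}_i)\cap V(\mathcal{F}_j)|\le t$. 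Without that bound both your verification and your construction can break. Concretely, take $c=2$, $s=1$, $k=4$, so $t=2$, $\lambda=2$, and $m(4,2,2;2,2)=3$, witnessed by the triangle $\mathcal{G}^*=\{12,13,23\}$. Nothing in your stated conditions forbids edges $A_0,A_1,A_2\in\mathcal{F}$ with $\phi_{A_0}(A_0\cap A_1)=\{1,2,3\}$ (all three $2$-subsets of this intersection lie in $\mathcal{G}_{A_0}$, and one can arrange the same on the $A_1$ side, so ownership is respected) while $A_0\cap A_2$ is a single point $x$ with $\phi_{A_0}(x)=4$. Then $A_0\subseteq A_1\cup A_2$, so $\mathcal{F}$ fails to be $(2,1)$-frameproof; and your rebalancing step cannot produce two disjoint $2$-sets covering $A_0$, each inside some $A_0\cap A_{i_j}$, because the only intersection containing $x$ has size $1<t$. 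So the ``greedy rebalancing'' is not merely the technical obstacle you flag --- it is provably impossible in general from the hypotheses you impose on the packing.

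The repair is to require, as the paper does, that the packing be \emph{induced} in the sense of \cite{frankl1987colored}: pairwise vertex intersections of the copies have size at most $t$, and any intersection of size exactly $t$ is a non-edge of both copies. Once $|A_0\cap A_i|\le t$ for all $i$, no rebalancing is needed: writing $sA_0=\widetilde{A}_1\uplus\cdots\uplus\widetilde{A}_c$ with $\widetilde{A}_i\subseteq A_0\cap A_i$, the identity $sk=\lambda t+(c-\lambda)(t-1)$ from \eqref{eq-lambda} together with $|\widetilde{A}_i|\le t$ forces at least $\lambda$ indices with $|\widetilde{A}_i|=|A_0\cap A_i|=t$; these intersections themselves are the required $(s+1,c-s+1)$-disjoint $t$-sets lying in $\mathcal{G}_{A_0}$, which contradicts the extremality of $\mathcal{G}^*$. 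Note also that a plain near-perfect matching in your auxiliary hypergraph only makes the families $\mathcal{H}_A$ pairwise disjoint; it yields neither genuine ownership nor the intersection bound, so your nibble would have to be upgraded on both counts --- which is precisely what Lemma \ref{lem-induced packing} already provides.
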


Similar to the proof of \cite[Theorem 2.5]{huang2024focal},  we need the tools of packings and induced packings of hypergraphs, which is originally introduced in \cite{frankl1987colored}.
\begin{definition}[Packings and induced packings]\label{def-hypergraph-packing}
    For a fixed $t$-uniform hypergraph $\mathcal{F}$ and a host $t$-uniform hypergraph $\mathcal{H}$, a family of $m$ $t$-uniform hypergraphs
\[\{(V(\mathcal{F}_1),\mathcal{F}_1), (V(\mathcal{F}_2),\mathcal{F}_2),\ldots,(V(\mathcal{F}_m),\mathcal{F}_m)\}\]
    forms an \textbf{$\mathcal{F}$-packing} in $\mathcal{H}$ if for each $j\in [m]$,
    \begin{enumerate}[(i)]
        \item $V(\mathcal{F}_j)\subseteq V(\mathcal{H}), \mathcal{F}_j\subseteq \mathcal{H}$;
        \item $\mathcal{F}_j$ is a copy of $\mathcal{F}$ defined on the vertex set $V(\mathcal{F}_j)$;
        \item the $m$ $\mathcal{F}$-copies are pairwise edge disjoint, i.e., $\mathcal{F}_i\cap \mathcal{F}_j=\emptyset$ for arbitrary distinct $i,j\in [m]$.
    \end{enumerate}
    The above $\mathcal{F}$-packing is said to be \textbf{induced} if it further satisfies
        \begin{enumerate}[(i)]\setcounter{enumi}{3}
        \item $|V(\mathcal{F}_i)\cap V(\mathcal{F}_j)|\le t$ for arbitrary distinct $i,j\in [m]$;
        \item for $i\neq j$, if $|V(\mathcal{F}_i)\cap V(\mathcal{F}_j)|= t$, then $V(\mathcal{F}_i)\cap V(\mathcal{F}_j)\notin \mathcal{F}_i\cup \mathcal{F}_j$.
    \end{enumerate}
\end{definition}

By definition, every $\mathcal{F}$-packing in $\mathcal{H}$ has at most $|\mathcal{H}|/|\mathcal{F}|$ copies of $\mathcal{F}$, as $\mathcal{F}$-copies are pairwise edge disjoint.
By the influential works of R{\"o}dl \cite{rodl1985packing}, Frankl and R{\"o}dl \cite{frankl1985near}, and Pippenger (see \cite{pippenger1989asymptotic}), near-optimal $\mathcal{F}$-packings with $(1-o(1))|\mathcal{H}|/|\mathcal{F}|$ copies of $\mathcal{F}$ exist. This result was further strengthened by Frankl and F{\"u}redi \cite{frankl1987colored}  that near-optimal induced $\mathcal{F}$-packings exist.
%

\begin{lemma}[\cite{frankl1987colored}]\label{lem-induced packing}
    Let $k>t$ and $\mathcal{F}\subseteq\binom{[k]}{t}$ be fixed. Then there exists an induced $\mathcal{F}$-packing $\{(V(\mathcal{F}_i),\mathcal{F}_i): i\in [m]\}$ in $\binom{[n]}{t}$ with $m\ge (1-o(1))\binom{n}{t}/|\mathcal{F}|$, where $o(1)\to 0$ as $n\to \infty$.
\end{lemma}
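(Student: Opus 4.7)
The plan is to prove the lemma by invoking the semi-random (R\"odl nibble) method adapted to forbidden-pattern packings, which is the technical framework developed by Frankl and F\"uredi in \cite{frankl1987colored}. The starting point is an auxiliary $|\mathcal{F}|$-uniform hypergraph $L$ whose vertex set is $V(L)=\binom{[n]}{t}$ and whose edges are the labeled copies of $\mathcal{F}$ placed in $[n]$: for each $k$-subset $K\subseteq[n]$ and each injection $\phi$ realizing a copy of $\mathcal{F}$ on $K$, the set $\phi(\mathcal{F})\subseteq V(L)$ is declared an edge $e_{K,\phi}$ of $L$. A matching in $L$ is exactly an edge-disjoint $\mathcal{F}$-packing, and the target $(1-o(1))\binom{n}{t}/|\mathcal{F}|$ matches the trivial upper bound $|V(L)|/|\mathcal{F}|$ on any matching in an $|\mathcal{F}|$-uniform hypergraph on $V(L)$.

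A standard double count shows $L$ is near-regular with degree $D=\Theta(n^{k-t})$ (choose a $k$-set $K\supseteq T$ in $\binom{n-t}{k-t}$ ways, then place $\mathcal{F}$ on $K$ so that some edge of $\mathcal{F}$ maps to $T$, a constant number of completions) and codegree $O(n^{k-t-1})=o(D)$, so Pippenger's theorem already produces a near-perfect matching in $L$, i.e., a non-induced $\mathcal{F}$-packing of the claimed size. To secure the induced property I would impose additional \emph{conflict constraints}: call two edges $e_{K_1,\phi_1},e_{K_2,\phi_2}$ of $L$ a \emph{forbidden pair} whenever they violate the induced axioms, namely either $|K_1\cap K_2|>t$, or $|K_1\cap K_2|=t$ and the shared $t$-set lies in $\phi_1(\mathcal{F})\cup\phi_2(\mathcal{F})$ without already being a common vertex of both edges. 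The random greedy nibble is then modified to reject, in each round, both the usual vertex-colliding selections and all forbidden-pair selections.

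The main obstacle is that the forbidden-pair degree per edge is $\Theta(n^{k-t})=\Theta(D)$ — comparable to, rather than smaller than, the matching degree — so a naive application of Pippenger-Spencer with added conflicts does not go through. The resolution, which is the technical core of \cite{frankl1987colored}, is to exploit the rigid combinatorial structure of the forbidden overlaps (two $k$-sets meeting in exactly $t$ vertices with a prescribed edge incidence) and encode them via a colored/typed auxiliary hypergraph, equivalently enriching the vertex set so that the forbidden configurations translate into low-codegree conditions in the enlarged structure. With this encoding, the usual Azuma/Chernoff concentration arguments survive round by round, the greedy process terminates after $O(\log n)$ rounds with all but $o(\binom{n}{t})$ vertices of $V(L)$ matched, and translating back produces an induced $\mathcal{F}$-packing of size $(1-o(1))\binom{n}{t}/|\mathcal{F}|$, as desired.
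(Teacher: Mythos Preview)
The paper does not give its own proof of this lemma: it is quoted verbatim from Frankl and F\"uredi \cite{frankl1987colored} and used as a black box, so there is no in-paper argument to compare your proposal against.

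As for your sketch itself, the setup is sound --- the auxiliary $|\mathcal{F}|$-uniform hypergraph $L$ on $V(L)=\binom{[n]}{t}$ has the degree and codegree asymptotics you state, and you correctly identify the real difficulty: the conflict degree coming from pairs of $k$-sets meeting in exactly $t$ vertices is $\Theta(D)$, not $o(D)$, so a one-line appeal to Pippenger with conflicts is not enough. Where your proposal becomes thin is precisely at the resolution of this obstacle: ``encode them via a colored/typed auxiliary hypergraph \ldots\ so that the forbidden configurations translate into low-codegree conditions'' is a description of the desired outcome, not of a mechanism that achieves it, and you immediately attribute the missing argument back to \cite{frankl1987colored}. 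That is fine as a citation, which is exactly what the paper does, but it is not yet a proof; if you intend to actually supply one you would need to spell out how the conflict structure is absorbed (e.g.\ by the specific partial-design iteration in Frankl--F\"uredi, or by a modern conflict-free hypergraph matching theorem with the conflict system verified to satisfy its boundedness hypotheses).
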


Now we give a  proof of Theorem \ref{thm-gen-hypergraph-lowerbound}.

\begin{proof}[Proof of Theorem \ref{thm-gen-hypergraph-lowerbound}]
    Let $\mathcal{G}\subseteq\binom{[k]}{t}$ be one of the largest $t$-uniform hypergraphs on $k$ vertices that contains no $\lambda$ many $(s+1,c-s+1)$-disjoint edges. Then $|\mathcal{G}|=m(k,t,\lambda; s+1,c-s+1)$. Let $\mathcal{F}:=\binom{[k]}{t}\backslash \mathcal{G}$ be a hypergraph on $[k]$, then $|\mathcal{F}|= \binom{k}{t}-|\mathcal{G}|= \binom{k}{t}-m(k,t,\lambda; s+1,c-s+1)$.
    We apply Lemma \ref{lem-induced packing} with $\mathcal{F}$ to get  an induced $\mathcal{F}$-packing $\{(V(\mathcal{F}_i),\mathcal{F}_i): i\in [m]\}$ in $\binom{[n]}{t}$ with
    \[m\ge (1-o(1))\binom{n}{t}/|\mathcal{F}|= (1-o(1))\frac{\binom{n}{t}} {\binom{k}{t}-m(k,t,\lambda; s+1,c-s+1)},\]
    where $o(1)\to 0$ as $n\to \infty$. 
    Let \[\mathcal{H}(\mathcal{F}):= \{V(\mathcal{F}_i): i\in [m]\}\subseteq \binom{[n]}{k}.\]
Then $|\mathcal{H}(\mathcal{F})|=m$. It suffices to show that $\mathcal{H}(\mathcal{F})$
   is $(c,s)$-frameproof.

    Suppose on the contrary that $V_0,V_1,\ldots,V_c\in \mathcal{H}(\mathcal{F})$ form a $(c,s)$-focal hypergraph with focus $V_0$, where $V_0\neq V_j$ for each $j\in [c]$ but $V_1,\ldots,V_c$ are no need to be distinct. Then, $sV_0\subseteq V_1\uplus\cdots\uplus V_c$, that is, we can find $A_1\subseteq V_0\cap V_1,\ldots, A_c\subseteq V_0\cap V_c$ such that $sV_0= A_1\uplus\cdots\uplus A_c$. It follows from Observation \ref{obs-own subset} (ii) that the $c$ members $A_1,\ldots,A_c$ are $(s+1,c-s+1)$-disjoint subsets of $V_0$. By Definition \ref{def-hypergraph-packing} (iv),
   for each $i\in [c]$, $|A_i|\le |V_0\cap V_i|\le t$. Moreover, by (\ref{eq-lambda}),  there are at least $\lambda$ distinct $i\in [c]$ such that $|A_i|=t$ and thus $A_i=V_0\cap V_i$. Assume without loss of generality that $|V_0\cap V_i|=t$ for each $i\in [\lambda]$, and $A_1=V_0\cap V_1,\ldots,A_\lambda=V_0\cap V_\lambda$ are $\lambda$ many $(s+1,c-s+1)$-disjoint edges over $V_0$.
   Suppose $V_0=V(\mathcal{F}_{i_0})$ for some $i_0\in [m]$.
   By Definition \ref{def-hypergraph-packing} (v),  for each $i\in [\lambda]$, $V_0\cap V_i\notin \mathcal{F}_{i_0}$. However, by the definition of $\mathcal{G}$, the hypergraph $\binom{V_0}{t}/\mathcal{F}_{i_0}$ contains no $(s+1,c-s+1)$-disjoint collection of size $\lambda$, a contradiction. This completes the proof.
\end{proof}

\section{Quantitative frameproof codes}\label{section-codes}

\subsection{Upper and lower bounds of $f_{c,s}^q(n)$}\label{subsection_code_bound}
Similar to the own subsets of a set, we need to define the own subsequences of a vector as in \cite{huang2024focal}.
For a vector $\bm x=(x_1,\ldots,x_n)\in [q]^n$ and a subset $T\subseteq[n]$, let $\bm x_T=(x_i:i\in T)$ denote the subsequence of $\bm x$ restricted on $T$. For a code $\mathcal{C}\subseteq[q]^n$, a codeword $\bm x\in \mathcal{C}$ and a subset $T\subseteq [n]$, the subsequence $\bm x_T$ is called an \textit{own subsequence} of $\bm x$ (with respect to $\mathcal{C}$) if for every $\bm y\in \mathcal{C}\backslash\{\bm x\}$, $\bm x_T\neq \bm y_T$; otherwise, $\bm x_T$ is called a \textit{non-own subsequence} of $\bm x$.

The proof of Theorem \ref{mainthm:code} is similar to that of Theorem \ref{mainthm-turan density}, except an appropriate version of Lemma \ref{lem-induced packing} recently developed in \cite{liu2024near}. Therefore, we state the upper and lower bounds below but postpone their proofs to Appendix \ref{sec:main-code}.

\begin{theorem}\label{thm-gen-code-upperbound}
    For integers $n,c\ge 2$ and $1\le s\le c-1$, let $t=\lceil sn/c\rceil$. When $q\ge \frac{t}{n-t+1}(\binom{n}{t}-m(n,t,\lambda;s+1,c-s+1))$, we have
    \[f_{c,s}^{q}(n)\le \frac{\binom{n}{t}}{\binom{n}{t}-m(n,t,\lambda;s+1,c-s+1)} q^t,\]
    where $\lambda\in [c]$ is the integer such that $\lambda\equiv sn \pmod{c}$.
\end{theorem}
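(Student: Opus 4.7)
The plan is to mirror the proof of Theorem~\ref{thm-gen-hypergraph-upperbound}, with own subsequences playing the role of own subsets and the universe $\binom{[n]}{t}$ replaced by the set of coordinate--value patterns $\binom{[n]}{t}\times [q]^t$, whose total size is $\binom{n}{t}q^t$. The factor of $q^t$ is what forces the hypothesis on $q$ to appear in the theorem: it must be large enough so that $t$-subsequences dominate $(t-1)$-subsequences in the counting.

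First, I would partition a $(c,s)$-frameproof code $\mathcal{C}\subseteq [q]^n$ as $\mathcal{C}=\mathcal{C}_0\cup\mathcal{C}_1$, where $\mathcal{C}_0$ is the set of codewords having no own $(t-1)$-subsequence, and establish the code analog of Lemma~\ref{lem-gen-hypergraph-ownsubset}: every $\bm x\in\mathcal{C}_0$ has at least $\binom{n}{t}-m(n,t,\lambda;s+1,c-s+1)$ own $t$-subsequences. Suppose to the contrary that the family $\mathcal{T}_{\bm x}:=\{T\in\binom{[n]}{t}:\bm x_T\text{ is non-own}\}$ has size strictly greater than $m(n,t,\lambda;s+1,c-s+1)$. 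By Definition~\ref{def-generalized matching number} it contains an $(s+1,c-s+1)$-disjoint collection $T_1,\ldots,T_\lambda$, and Claim~\ref{claim-disjoint} (applied with $[n]$ in place of $A$) produces $(t-1)$-subsets $T_{\lambda+1},\ldots,T_c\subseteq[n]$ with $s[n]=T_1\uplus\cdots\uplus T_c$. All $T_i$ are non-own coordinate sets for $\bm x$ (the first $\lambda$ by hypothesis; the remainder since $\bm x\in\mathcal{C}_0$), so the code version of Observation~\ref{obs-own subset}(i) produces a $(c,s)$-focal code with focus $\bm x$, contradicting the $(c,s)$-frameproof property.

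Next, I would define two pattern families
\begin{align*}
\mathcal{P}_0 &= \bigl\{(T,\bm v)\in\tbinom{[n]}{t}\times[q]^t : \bm v=\bm x_T\text{ is an own }t\text{-subsequence of some }\bm x\in\mathcal{C}_0\bigr\},\\
\mathcal{P}_1 &= \bigl\{(T,\bm v)\in\tbinom{[n]}{t}\times[q]^t : \exists\,\bm y\in\mathcal{C}_1,\ S\subsetneq T,\ |S|=t-1,\ \bm v_S=\bm y_S\text{ is own for }\bm y\bigr\}.
\end{align*}
These are disjoint: a shared pattern would force some $\bm x\in\mathcal{C}_0$ and $\bm y\in\mathcal{C}_1$ to agree on an own $(t-1)$-subsequence of $\bm y$, forcing $\bm x=\bm y$. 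Hence $|\mathcal{P}_0|+|\mathcal{P}_1|\le\binom{n}{t}q^t$. The previous step yields $|\mathcal{P}_0|\ge|\mathcal{C}_0|\bigl(\binom{n}{t}-m(n,t,\lambda;s+1,c-s+1)\bigr)$. For $\mathcal{P}_1$, each own $(t-1)$-subsequence $\bm y_S$ of each $\bm y\in\mathcal{C}_1$ extends to $q(n-t+1)$ patterns (pick an extra coordinate $i\in[n]\setminus S$ and an arbitrary value in $[q]$), and each pattern in $\mathcal{P}_1$ is counted at most $t$ times (once per $(t-1)$-subset of $T$, with the value string on that subset uniquely determining the contributing codeword by ownness); thus $|\mathcal{P}_1|\ge|\mathcal{C}_1|\,q(n-t+1)/t$. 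Combining everything and using the hypothesis $q\ge\tfrac{t}{n-t+1}\bigl(\binom{n}{t}-m(n,t,\lambda;s+1,c-s+1)\bigr)$ to absorb the $\mathcal{C}_1$ term into the main bound gives $|\mathcal{C}|(\binom{n}{t}-m(n,t,\lambda;s+1,c-s+1))\le\binom{n}{t}q^t$, which is the desired inequality. The main obstacle I expect is the careful bookkeeping of patterns in $\mathcal{P}_0$ and $\mathcal{P}_1$: one must track both coordinate sets \emph{and} value strings and invoke the uniqueness of own subsequences to control multiplicities; once that is clean, the argument is a faithful translation of the hypergraph proof.
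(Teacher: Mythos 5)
Your proposal is correct and takes essentially the same approach as the paper: the paper performs the identical double count inside the complete $n$-partite $t$-uniform hypergraph $\mathcal{H}_n^{(t)}(q)$ (which is in bijection with your pattern set $\binom{[n]}{t}\times[q]^t$ via the embedding $\pi$), importing the key bound on own $t$-subsequences by applying Lemma~\ref{lem-gen-hypergraph-ownsubset} to $\pi(\mathcal{C})$ rather than re-deriving the code analogue directly as you do. Your estimates for $\mathcal{P}_0$ and $\mathcal{P}_1$ and the use of the hypothesis on $q$ coincide with the paper's treatment of $\mathcal{T}_0$ and $\mathcal{T}_1$.
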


%

\begin{theorem}\label{thm-gen-code-lowerbound}
    For any integers $n,c\ge 2$ and $1\le s\le c-1$, let $t=\lceil sn/c\rceil$. Then we have
    \[f_{c,s}^{q}(n)\ge (1-o(1))\frac{\binom{n}{t}}{\binom{n}{t}-m(n,t,\lambda;s+1,c-s+1)} q^t,\]
    where $\lambda\in [c]$ is the integer such that $\lambda\equiv sn \pmod{c}$, and $o(1)\to 0$ as $q\to \infty$.
\end{theorem}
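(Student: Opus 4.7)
The plan is to mirror the proof of the hypergraph lower bound (Theorem~\ref{thm-gen-hypergraph-lowerbound}) in the coding setting, replacing the induced $\mathcal{F}$-packing of Lemma~\ref{lem-induced packing} with its code-theoretic counterpart recently developed in~\cite{liu2024near}. Up to this swap, both the extremal family driving the construction and the argument ruling out a focal configuration should carry over unchanged.

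First I would fix an extremal family $\mathcal{G}\subseteq\binom{[n]}{t}$ of size $m(n,t,\lambda;s+1,c-s+1)$ containing no $(s+1,c-s+1)$-disjoint collection of size $\lambda$, and set $\mathcal{F}:=\binom{[n]}{t}\setminus\mathcal{G}$, so that $|\mathcal{F}|=\binom{n}{t}-m(n,t,\lambda;s+1,c-s+1)$. Applying the code version of Lemma~\ref{lem-induced packing} yields a code $\mathcal{C}\subseteq[q]^n$ with $|\mathcal{C}|\ge(1-o(1))\binom{n}{t}q^t/|\mathcal{F}|$ satisfying two induced properties: (i) any two distinct codewords of $\mathcal{C}$ agree on at most $t$ coordinates; and (ii) for every codeword $\bm x\in\mathcal{C}$, the family of $t$-subsets $T\subseteq[n]$ for which $\bm x_T$ is a non-own subsequence contains no $(s+1,c-s+1)$-disjoint collection of size $\lambda$.

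To verify that $\mathcal{C}$ is $(c,s)$-frameproof, I would proceed by contradiction. Suppose $\bm x^0,\bm x^1,\ldots,\bm x^c\in\mathcal{C}$ form a $(c,s)$-focal code with focus $\bm x^0$, and set $T_j:=\{i\in[n]:x_i^j=x_i^0\}$ for $j\in[c]$. Property (i) gives $|T_j|\le t$, and the focal condition yields $s[n]\subseteq T_1\uplus\cdots\uplus T_c$ as multisets, so one can select subsets $A_j\subseteq T_j$ with $s[n]=A_1\uplus\cdots\uplus A_c$. By~(\ref{eq-lambda}), $\sum_j|A_j|=sn=\lambda t+(c-\lambda)(t-1)$, hence $\sum_j(t-|A_j|)=c-\lambda$, which forces at least $\lambda$ indices $j$ to satisfy $|A_j|=t$, and for those indices $A_j=T_j$. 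A direct multiset count (each $i\in[n]$ appears exactly $s$ times in $A_1\uplus\cdots\uplus A_c$) then shows that $A_1,\ldots,A_c$ form a $(s+1,c-s+1)$-disjoint collection of subsets of $[n]$, a property straightforwardly inherited by the sub-collection of the $\lambda$ size-$t$ sets among them. Since each such $T_j$ indexes a non-own $t$-subsequence of $\bm x^0$ (as $\bm x^j$ agrees with $\bm x^0$ on $T_j$ and $\bm x^j\neq\bm x^0$), property (ii) is violated, giving the desired contradiction.

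The principal obstacle is establishing the code version of the induced packing lemma in the second step, which must simultaneously saturate the counting bound $\binom{n}{t}q^t/|\mathcal{F}|$ and enforce the structural control on non-own $t$-subsequences captured by (ii); this is precisely the tool developed in~\cite{liu2024near}. Once quoted, the rest of the argument is a clean transfer of the hypergraph reasoning via the natural correspondence between own $t$-subsets of a $k$-set and own $t$-subsequences of a codeword, with the ground set $V_0$ in the hypergraph argument now played by $[n]$ throughout.
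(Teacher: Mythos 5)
Your proposal is correct and follows essentially the same route as the paper: take the complement $\mathcal{F}=\binom{[n]}{t}\setminus\mathcal{G}$ of an extremal family for $m(n,t,\lambda;s+1,c-s+1)$, invoke the faithful induced $\mathcal{F}$-packing lemma of \cite{liu2024near} in $\mH_n^{(t)}(q)$, and rule out a $(c,s)$-focal code by the size count from (\ref{eq-lambda}) forcing $\lambda$ intersections of size exactly $t$ that land in a copy of $\mathcal{G}$. Your properties (i) and (ii) are exactly what Definition~\ref{def-hypergraph-packing}(iv)--(v) deliver for the code $\pi^{-1}$ of the packing's vertex sets, so the argument matches the paper's Appendix~\ref{sec:main-code} proof.
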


\subsection{Exact values of $f_{c,s}^q(n)$}

In this subsection, we  prove Theorem \ref{thm-general-code-exact value}.
First, note that $m(n,\lceil sn/c\rceil,\lambda;s+1,c-s+1)=0$ for $1\le \lambda\le \min\{s,c-s\}$. By Theorem \ref{thm-gen-code-upperbound}, for $sn\equiv \lambda \pmod{c}$ with $1\le\lambda\le \min\{s,c-s\}$ and sufficiently large $q\ge 2^{\Omega(n)}$, we have $f_{c,s}^q(n)\le q^{\lceil sn/c\rceil}$. The following theorem shows that the same upper bound also holds for smaller $q$.
\begin{theorem}\label{thm-exactvalue-upperbound}
    Let $c\ge 2$ and $1\le s\le c-1$. Suppose that $q> c-\lambda$ with $\lambda\equiv sn\pmod{c}$ and $1\le\lambda\le \min\{s,c-s\}$. Then
    \[f_{c,s}^q(n)\le q^{\lceil sn/c\rceil}.\]
\end{theorem}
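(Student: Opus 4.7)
The plan is to argue by contradiction: suppose $\mathcal{C} \subseteq [q]^n$ is $(c,s)$-frameproof with $|\mathcal{C}| \geq q^t + 1$, where $t=\lceil sn/c\rceil$, and exhibit a $(c,s)$-focal configuration inside $\mathcal{C}$.

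First, I apply a Singleton-type pigeonhole. Since $|\mathcal{C}| > q^t$, the projection $\pi_{T_0}\colon \mathcal{C} \to [q]^{T_0}$ with $T_0 = \{1, \ldots, t\}$ is not injective, yielding distinct $\bm x^0, \bm y \in \mathcal{C}$ with $\bm x^0_{T_0} = \bm y_{T_0}$, i.e.\ $A(\bm x^0, \bm y) \supseteq T_0$. The idea is to use $\bm y$, repeated $\lambda$ times, as the first $\lambda$ of the $c$ codewords in the target focal configuration; these collectively contribute multiplicity $\lambda$ to every coordinate in $T_0$ and $0$ outside.

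Second, using the identity $\lambda t + (c-\lambda)(t-1) = sn$ from equation~(\ref{eq-lambda}), I select $(t-1)$-subsets $T'_1, \ldots, T'_{c-\lambda} \subseteq [n]$ so that $\lambda$ copies of $T_0$ together with $T'_1, \ldots, T'_{c-\lambda}$ cover each coordinate of $[n]$ exactly $s$ times. The hypothesis $\lambda \le \min\{s, c-s\}$ (yielding $s-\lambda \ge 0$ and $c-\lambda \ge s$) ensures feasibility, via a greedy construction parallel to the one used in the proof of Lemma~\ref{lem-gen-hypergraph-ownsubset}. It then suffices to exhibit codewords $\bm z_l \in \mathcal{C}\setminus\{\bm x^0\}$ with $\bm z_l$ agreeing with $\bm x^0$ on $T'_l$ for each $l \in \{1, \ldots, c-\lambda\}$; combined with the $\lambda$ copies of $\bm y$, these complete a $(c,s)$-focal configuration.

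Third, this last step is where the hypothesis $q > c-\lambda$ enters. For each $(t-1)$-subset $T'_l$, the projection $\pi_{T'_l}\colon \mathcal{C} \to [q]^{T'_l}$ has at most $q^{t-1}$ singleton fibers, hence at most $q^{t-1}$ codewords $\bm x$ for which $\bm x_{T'_l}$ is own. A careful counting, combined with adaptive choices of $(\bm x^0, T_0)$ and the freedom in the choice of the balanced cover $\{T'_l\}$, should yield some $\bm x^0 \in \mathcal{C}$ for which all the required $T'_l$-subsequences are non-own, thereby producing the $\bm z_l$'s.

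The main obstacle I anticipate is precisely this third step. A naive union bound on ``bad'' $\bm x^0$ (those with an own subsequence at $T_0$ or at some $T'_l$) yields only $q^t + (c-\lambda)q^{t-1}$, which already exceeds $|\mathcal{C}| = q^t + 1$ when $q$ is as small as $c-\lambda+1$. Overcoming this requires a more delicate argument—either iteratively selecting the $\bm z_l$'s while tracking how much residual pigeonhole slack remains after each selection, or averaging over the many admissible balanced covers so that the factor $c-\lambda$ is absorbed symmetrically. The threshold $q > c-\lambda$ should emerge naturally from this counting, reflecting the number of independent $(t-1)$-pigeonholes one must satisfy simultaneously to complete the focal configuration.
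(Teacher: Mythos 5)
Your proposal correctly identifies the skeleton of the argument (the $s$-fold partition $s[n]=\lambda[t]\uplus T_{\lambda+1}\uplus\cdots\uplus T_c$ built via the greedy construction of Claim~\ref{claim-disjoint} and equation~(\ref{eq-lambda}), and the fact that a codeword with no own subsequence on any part of such a partition is the focus of a $(c,s)$-focal code), but the third step --- the only place where $q>c-\lambda$ must enter --- is left genuinely open, and you say so yourself. As you observe, the naive union bound gives $q^t+(c-\lambda)q^{t-1}$ ``bad'' codewords, which exceeds $q^t+1$; neither of the two repair strategies you sketch (iterative selection with residual slack, or averaging over balanced covers) is carried out, and it is not clear that either one closes the gap at the sharp threshold $q>c-\lambda$. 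So the proposal as written does not constitute a proof.

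The paper closes this gap with an extremal (variational) argument rather than a direct count. For each $S\subseteq[n]$ set $U_S=\{\bm x\in\mathcal{C}:\bm x_S\text{ is an own subsequence of }\bm x\}$; the frameproof property gives $\mathcal{C}=U_{T_1}\cup\cdots\cup U_{T_c}$ for \emph{every} $s$-fold partition. One then picks $S\in\binom{[n]}{t-1}$ with $|U_S|$ \emph{maximum} (say $S=[t-1]$) and proves the key two-level inequality $|U_{[t]}|\le q^t-(q-1)|U_{[t-1]}|$, which comes from the observation that each non-own $(t-1)$-projection value in $[q]^{t-1}$ can be extended to at most $q$ own $t$-projections, and there are at most $q^{t-1}-|U_{[t-1]}|$ such values. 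Applying the covering identity to the partition with $T_1=\cdots=T_\lambda=[t]$ and $|T_{\lambda+1}|=\cdots=|T_c|=t-1$, the assumption $|\mathcal{C}|\ge q^t+1$ forces $|U_{T_{\lambda+1}}\cup\cdots\cup U_{T_c}|\ge 1+(q-1)|U_{[t-1]}|$, and pigeonhole over the $c-\lambda$ parts together with $q-1\ge c-\lambda$ yields some $(t-1)$-set $S'$ with $|U_{S'}|\ge|U_{[t-1]}|+1$, contradicting maximality. This is the ``more delicate argument'' your proposal anticipates but does not supply: the factor $c-\lambda$ is not absorbed by averaging, but beaten by comparing against an extremal $(t-1)$-level quantity that the $(q-1)$ gain from the $t$-level inequality strictly dominates.
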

\begin{proof}
    Let $t=\lceil sn/c\rceil$, then $sn-\lambda=c(t-1)$. Let $\mathcal{C}\subseteq [q]^n$ be a $(c,s)$-frameproof code. It suffices to show that $|\mathcal{C}|\le q^t$. 
    For a subset $S\subseteq [n]$, let
    \[U_S= \{\bm x\in\mathcal{C}:\bm x_S \text{ is an own subsequence of $\bm x$ with respect to } \mathcal{C}\}.\]
    By the $(c,s)$-frameproof property, for every $s$-fold partition $s[n]=T_1\uplus\cdots \uplus T_c$, we have
    \begin{align}\label{eq-Cpartition}
        \mathcal{C}= U_{T_1}\cup\cdots\cup U_{T_c}.
    \end{align}

    Let $S\in \binom{[n]}{t-1}$ be a subset such that $|U_S|$ is maximized. Suppose that $q> c-\lambda$ and $|\mathcal{C}|\ge q^t+1$. We will show that there exists some $S'\in\binom{[n]}{t-1}$ such that $|U_{S'}|\ge |U_S|+1$, a contradiction.

    Assume without loss of generality that $S=[t-1]$. 
    For every $\bm s\in [q]^{t-1}$ which is not an own subsequence of any $\bm x\in \mathcal{C}$ on $[t-1]$, there are at most $q$ choices of $\bm x\in \mathcal{C}$ such that for some $a\in[q]$, $(\bm s,a)\in [q]^{t}$ is an own subsequence of some $\bm x$ on $[t]$. This implies that
    $|U_{[t]}\backslash U_{[t-1]}|\le q(q^{t-1}-|U_{[t-1]}|)$.
    Therefore, we have
    \begin{align}\label{eq-Ud+1}
        |U_{[t]}|\le |U_{[t-1]}|+|U_{[t]}\backslash U_{[t-1]}|\leq q^t-(q-1)|U_{[t-1]}|.
    \end{align}

    Consider $\lambda$ identical sets $T_i=[t]$ for $i\in [\lambda]$. Since $\lambda\le \min\{s,c-s\}$, these $\lambda$ sets are trivially $(s+1,c-s+1)$-disjoint. By Claim \ref{claim-disjoint}, we can find subsets $T_{\lambda+1},\ldots,T_c$ of size $t-1$ such that
    $s[n]= T_1\uplus T_2\uplus \cdots \uplus T_c$.
    Then by (\ref{eq-Cpartition}) and (\ref{eq-Ud+1}), we have
    \[|U_{T_{\lambda+1}}\cup\cdots\cup U_{T_c}|\ge |\mathcal{C}|-|U_{[t]}|\ge (|\mathcal{C}|-q^{t})+(q-1)|U_{[t-1]}|\ge 1+(q-1)|U_{[t-1]}|.\]
    By pigeonhole principle, there exists some $i$ with $\lambda+1\le i\le c$ such that
    \[|U_{T_i}|\ge \left\lceil\frac{1+(q-1)|U_{[t-1]}|}{c-\lambda} \right\rceil \ge |U_{[t-1]}|+1,\]
    where the last inequality holds since $q> c-\lambda$. Setting $S'=T_i$, we have $S'\in \binom{[n]}{t-1}$ and $|U_{S'}|\ge |U_{[t-1]}|+1$, contradicted to the maximality of $U_{[t-1]}$. This completes the proof.
\end{proof}

Clearly, Theorem \ref{thm-exactvalue-upperbound} implies the upper bound of Theorem \ref{thm-general-code-exact value}. 
For the lower bound, we prove it by connecting error-correcting codes with large minimum distance to quantitative frameproof codes. For any two vectors $\bm x,\bm y\in [q]^n$, let $I(\bm x,\bm y)=\{i\in [n]:x_i= y_i\}$, then the \textit{Hamming distance} between $\bm x,\bm y$ is $d(\bm x,\bm y)=n-|I(\bm x,\bm y)|$. The \textit{minimum distance} of a code $\mathcal{C}\subseteq [q]^n$ is $d(\mathcal{C})=\min\{d(\bm x,\bm y): \text{distinct } \bm x,\bm y\in\mathcal{C}\}$.

\begin{lemma}\label{lem-mindistance}
    If $\mathcal{C}\in[q]^n$ satisfies $d(\mathcal{C})> \lfloor\frac{c-s}{c}n \rfloor$, then $\mathcal{C}$ is $(c,s)$-frameproof.
\end{lemma}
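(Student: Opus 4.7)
The plan is to argue by contradiction via a straightforward double-counting argument that compares the total number of agreement coordinates with the Hamming-distance constraint.

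Suppose $\mathcal{C}$ is not $(c,s)$-frameproof. By definition there exist codewords $\bm x^0,\bm x^1,\ldots,\bm x^c\in\mathcal{C}$ (with $\bm x^0\neq \bm x^j$ for each $j\in[c]$) forming a $(c,s)$-focal code with focus $\bm x^0$; that is, for every $i\in[n]$ we have $|\{j\in[c]:x_i^j=x_i^0\}|\ge s$. I would then count the set $\{(i,j)\in[n]\times[c]:x_i^j=x_i^0\}$ in two ways. Summing over $i$ first, the focal condition yields a lower bound of $sn$. Summing over $j$ first, the total equals $\sum_{j=1}^c |I(\bm x^0,\bm x^j)|$.

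For the upper bound, I would use the minimum distance hypothesis: since $\bm x^j\neq \bm x^0$ for each $j\in[c]$, we have $d(\bm x^0,\bm x^j)\ge d(\mathcal{C})\ge \lfloor\tfrac{c-s}{c}n\rfloor+1$, hence
\[
|I(\bm x^0,\bm x^j)|=n-d(\bm x^0,\bm x^j)\le n-\lfloor\tfrac{c-s}{c}n\rfloor-1=\lceil\tfrac{s}{c}n\rceil-1.
\]
Summing over $j\in[c]$ gives $\sum_{j=1}^c|I(\bm x^0,\bm x^j)|\le c\bigl(\lceil\tfrac{s}{c}n\rceil-1\bigr)$.

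Finally I would combine the two bounds. Since $\lceil\tfrac{s}{c}n\rceil<\tfrac{s}{c}n+1$, we have $c\bigl(\lceil\tfrac{s}{c}n\rceil-1\bigr)<sn$, contradicting the lower bound $\sum_{j=1}^c|I(\bm x^0,\bm x^j)|\ge sn$ obtained from the focal condition. There is no real obstacle here; the only point to be a little careful about is verifying that the ceiling/floor arithmetic produces the strict inequality $c\lceil sn/c\rceil-c<sn$ in both cases $c\mid sn$ and $c\nmid sn$, which is immediate.
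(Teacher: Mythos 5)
Your proposal is correct and is essentially the same double-counting argument as the paper's: both lower-bound the total number of agreement coordinates by $sn$ via the focal condition and play this off against the minimum-distance hypothesis, the only cosmetic difference being that the paper converts to $\sum_j d(\bm x^0,\bm x^j)\le (c-s)n$ and applies pigeonhole plus integrality, while you bound each $|I(\bm x^0,\bm x^j)|$ individually and sum. The ceiling arithmetic you flag does go through, since $\lceil sn/c\rceil<sn/c+1$ holds strictly in all cases.
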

\begin{proof}
    Assume otherwise that $\bm x^0,\bm x^1,\ldots,\bm x^c\in \mathcal{C}$ form a $(c,s)$-focal code with focus $\bm x^0$, where $\bm x^0\neq \bm x^j$ for each $j\in [c]$ but $\bm x^1,\ldots,\bm x^c$ are no need to be distinct. Then, $s[n]\subseteq I(\bm x^0,\bm x^1)\uplus\cdots\uplus I(\bm x^0,\bm x^c)$. This implies that $\sum_{j=1}^{c}d(\bm x^0,\bm x^j)= \sum_{j=1}^{c}(n-|I(\bm x^0,\bm x^i)|)\le (c-s)n$. So there exists some $j\in [c]$ such that $d(\bm x^0,\bm x^j)\le\lfloor\frac{c-s}{c}n \rfloor$, a contradiction.
\end{proof}

Applying Lemma \ref{lem-mindistance} to codes generated by orthogonal arrays in \cite[Theorem III.7.18 and Theorem III.7.20]{colbourn2010crc}, we get the lower bound in Theorem \ref{thm-general-code-exact value} as in \cite{huang2024focal}.

\section{Critical quantitative frameproof codes and hypergraphs}

In this subsection, we consider a critical version of quantitative frameproof codes and hypergraphs.

Recall that in Introduction, we define the quantitative frameproof codes by using a $(c,s)$-focal code $\bm x^0,\bm x^1,\ldots,\bm x^c\subset [q]^n$ with focus $\bm x^0$, where $\bm x^1,\ldots,\bm x^c$ are no need to be distinct. Actually, when $s=1$ or $s=c-1$, it is equivalent to require that $\bm x^1,\ldots,\bm x^c$ are pairwise distinct, and we get the definitions of frameproof codes or focal-free codes. So it is natural to consider the structures by requiring that $\bm x^1,\ldots,\bm x^c$ are pairwise distinct,
which we call {\it critical quantitative frameproof codes}.
Formally speaking, a code $\mathcal{C}\subset [q]^n$ is said to be {\it $(c,s)$-critical-frameproof} if for every $c+1$ distinct codewords $\bm x^0,\mathbf{x}^1,\ldots,\bm x^c$, there must be some coordinate $i\in [n]$ such that $|\{j\in [c]:x_i^j=x_i^0\}|<s$.

When $q=2$, we can similarly define  {\it critical quantitative frameproof hypergraphs}.
A $k$-uniform hypergraph $\mathcal{F}\subseteq 2^{[n]}$  is said to be {\it $(c,s)$-critical-frameproof} if for every $c+1$ distinct edges $A_0,A_1,\ldots,A_c$, there must be some vertex $i\in A_0$ such that $|\{j\in [c]:i\in A_j\}|<s$.

Let $g_{c,s}(n,k)$ denote the maximum size of a $(c,s)$-critical-frameproof hypergraph in $\binom{[n]}{k}$. Let $g_{c,s}^{q}(n)$ denote the maximum size of a $(c,s)$-critical-frameproof code in $[q]^n$. We present asymptotically optimal estimates of both $g_{c,s}(n,k)$ for large $n$ and $g_{c,s}^{q}(n)$ for large $q$ as follows, up to a constant factor $\min\{s,c-s\}$.

\begin{theorem}\label{thm-gen-hypergraph-critical}
    For $n\ge k\ge 2$ and $c\ge 2,1\le s\le c-1$, let $t=\lceil sk/c\rceil$. Let $\lambda\in [c]$ satisfy $\lambda\equiv sk \pmod{c}$. Then
    \begin{enumerate}[(i)]
      \item \[ g_{c,s}(n,k)\ge f_{c,s}(n,k)\ge (1-o(1))\frac{\binom{n}{t}} {\binom{k}{t}- m(k,t,\lambda; s+1,c-s+1)},\]
            where $o(1)\to 0$ as $n\to \infty$.
      \item When $n\ge \left(\binom{k}{t}- m(k,t,\lambda; s+1,c-s+1)\right)\frac{t+c}{s_0}+ t-1$, we have
            \[g_{c,s}(n,k)\le \frac{s_0\binom{n}{t}} {\binom{k}{t}- m(k,t,\lambda; s+1,c-s+1)},\]
            where $s_0=\min\{s,c-s\}$.
    \end{enumerate}
\end{theorem}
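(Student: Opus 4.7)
The inequality $g_{c,s}(n,k) \ge f_{c,s}(n,k)$ is immediate from the definitions: any critical $(c,s)$-focal hypergraph (one in which $A_0,A_1,\ldots,A_c$ are pairwise distinct) is in particular a $(c,s)$-focal hypergraph in the broader sense, so every $(c,s)$-frameproof hypergraph forbids critical focals too, hence is $(c,s)$-critical-frameproof. The asymptotic lower bound for $f_{c,s}(n,k)$ from Theorem~\ref{thm-gen-hypergraph-lowerbound} thus transfers directly to $g_{c,s}(n,k)$.

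\textbf{Part (ii).} The plan is to mimic the proof of Theorem~\ref{thm-gen-hypergraph-upperbound}. Let $\mathcal{F} \subseteq \binom{[n]}{k}$ be $(c,s)$-critical-frameproof, and set $M := \binom{k}{t} - m(k,t,\lambda;s+1,c-s+1)$. Split $\mathcal{F} = \mathcal{F}_0 \cup \mathcal{F}_1$ according to whether each edge has an own $(t-1)$-subset, and define $\mathcal{T}_0, \mathcal{T}_1 \subseteq \binom{[n]}{t}$ exactly as in that proof. The bound $|\mathcal{T}_1| \ge |\mathcal{F}_1|(n-t+1)/t$ carries over unchanged. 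The crux is to establish a critical analogue of Lemma~\ref{lem-gen-hypergraph-ownsubset}: each $A \in \mathcal{F}_0$ admits at least $M/s_0$ own $t$-subsets, up to an additive loss at most linear in $c$.

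I would prove this modified lemma by contradiction. Following the arguments of Lemma~\ref{lem-gen-hypergraph-ownsubset} and Claim~\ref{claim-disjoint}, assuming the set $\mathcal{T}_A$ of non-own $t$-subsets of $A$ is sufficiently large, we extract an $(s+1,c-s+1)$-disjoint subcollection $T_1,\ldots,T_\lambda$ of size $\lambda$ and complete it to a partition $sA = T_1 \uplus \cdots \uplus T_c$ with $|T_i| = t$ for $i \le \lambda$ and $|T_i| = t-1$ otherwise; since $A \in \mathcal{F}_0$, each $T_i$ is non-own. So far this parallels the non-critical proof and yields witness edges $A_i \in \mathcal{F} \setminus \{A\}$ with $T_i \subseteq A_i$. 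To obtain a critical focal, the $A_i$'s must additionally be pairwise distinct.

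\emph{The main obstacle}, specific to the critical setting, is exactly this distinctness requirement. I would cast it as a bipartite matching problem: one part is $\{T_1, \ldots, T_c\}$, the other is $\mathcal{F}\setminus\{A\}$, with $T_i$ adjacent to $B$ iff $T_i \subseteq B$, and we seek a system of distinct representatives of size $c$. By Hall's theorem, such an SDR exists unless some subcollection $S \subseteq [c]$ has fewer than $|S|$ joint witnesses---a structural constraint that, combined with upper bounds on how many $t$-subsets of $A$ a single witness edge can cover, should force $|\mathcal{T}_A|$ to be small. The factor $s_0 = \min\{s,c-s\}$ is expected to enter through the worst-case multiplicity with which a single witness edge can serve as $A_i$ for many of the $T_i$'s simultaneously; the symmetry between the two dual constraints in the $(s+1,c-s+1)$-disjoint definition is what yields $\min\{s,c-s\}$ rather than either value alone. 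Making this quantitative---in particular securing the sharp threshold on $|\mathcal{T}_A|$---is the most delicate step, and may require iterating the argument with reselection of the $T_i$'s when Hall's condition fails.

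Given the modified lemma, the conclusion follows from the chain
\[
\binom{n}{t} \;\ge\; |\mathcal{T}_0| + |\mathcal{T}_1| \;\ge\; \tfrac{M}{s_0}\,|\mathcal{F}_0| + \tfrac{n-t+1}{t}\,|\mathcal{F}_1| \;\ge\; \tfrac{M}{s_0}\,|\mathcal{F}|,
\]
where the hypothesis $n \ge M(t+c)/s_0 + t-1$ is used in the last step to absorb the additive slack (of order $c/s_0$ per edge) coming from the modified lemma into the $\mathcal{F}_1$ contribution. Rearranging yields $g_{c,s}(n,k) \le s_0 \binom{n}{t}/M$, as claimed.
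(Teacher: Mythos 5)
Part (i) is correct and is exactly the paper's (one-line) argument. Part (ii), however, has a genuine gap at precisely the point you flag as ``the most delicate step,'' and your proposed route does not close it. You keep the original notion of own subset, so a non-own $t$-subset $T_i$ of $A$ is only guaranteed \emph{one} witness $B\in\mathcal{F}\setminus\{A\}$ with $T_i\subseteq B$. With single witnesses, Hall's condition in your bipartite graph can fail catastrophically (e.g.\ many of the $T_i$'s sharing the same unique witness), and ``iterating with reselection of the $T_i$'s'' is not an argument --- nothing in the $(s+1,c-s+1)$-disjointness of $T_1,\dots,T_\lambda$ prevents all of them from lying in one common edge $B$ with $|A\cap B|=k-1$. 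Consequently your modified lemma (``each $A\in\mathcal{F}_0$ admits at least $M/s_0$ own $t$-subsets,'' with $\mathcal{F}_0$ and ownness still in the original sense) is unproven and it is not clear it is even true; the displayed chain ending in $\frac{M}{s_0}|\mathcal{F}|$ therefore rests on an unestablished premise.

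The paper resolves this differently, and the difference is where the factor $s_0$ enters. It \emph{redefines} ownness with multiplicity: $T\subseteq A$ is an $s_0$-own subset if fewer than $s_0$ edges of $\mathcal{F}\setminus\{A\}$ contain $T$. Then every $s_0$-non-own $T_i$ comes with $s_0$ \emph{distinct} witnesses, and the analogue of Observation~\ref{obs-own subset}(i) is proved by a case split: when $s_0=c-s$ the witness sets have degree at least $s_0=c-s$ on one side while the $(c-s+1)$-covering property caps the degree on the other side at $c-s$, so Hall's theorem gives an SDR directly; when $s_0=s$ a greedy selection is used instead, discarding any $T_i$ whose $s$ witnesses are already chosen (such a $T_i$ is then automatically covered $s$ times). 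The per-edge lemma then gives the \emph{full} count $M$ of $s_0$-own $t$-subsets for each $A\in\mathcal{F}_0$, and the factor $s_0$ appears only in the global double count, because a single $t$-set can now be an $s_0$-own subset of up to $s_0$ different edges. So your instinct that Hall's theorem is the right tool is correct, but the multiplicity must be built into the definition of ownness \emph{before} the counting, not recovered afterwards; as written, your Part (ii) is incomplete.
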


\begin{theorem}\label{thm-gen-code-critical}
    For any integers $n,c\ge 2$ and $1\le s\le c-1$, let $t=\lceil sn/c\rceil$. Let $\lambda\in [c]$ satisfy $\lambda\equiv sn \pmod{c}$. Then
    \begin{enumerate}[(i)]
      \item \[g_{c,s}^{q}(n)\ge f_{c,s}^{q}(n) \geq (1-o(1))\frac{\binom{n}{t}}{\binom{n}{t}-m(n,t,\lambda;s+1,c-s+1)} q^t,\]
            where $o(1)\to 0$ as $q\to \infty$.
      \item When $q\ge \frac{t}{s_0(n-t+1)}\left(\binom{n}{t}-m(n,t,\lambda;s+1,c-s+1)\right)$ and $q\ge (\frac{c+1}{s_0})^{1/t}$, we have
            \[g_{c,s}^{q}(n)\le \frac{s_0\binom{n}{t}}{\binom{n}{t}-m(n,t,\lambda;s+1,c-s+1)} q^t,\]
            where $s_0=\min\{s,c-s\}$.
    \end{enumerate}
\end{theorem}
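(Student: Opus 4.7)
The first inequality in part (i) is immediate: any $(c,s)$-frameproof code is automatically $(c,s)$-critical-frameproof, because a critical-focal configuration (with $\bm x^0,\bm x^1,\ldots,\bm x^c$ pairwise distinct) is a special instance of the focal configuration that the frameproof property already forbids. Hence $g_{c,s}^q(n)\ge f_{c,s}^q(n)$, and the claimed asymptotic lower bound follows at once from Theorem~\ref{thm-gen-code-lowerbound}.

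For part (ii), the plan is to revisit the proof scheme of Theorem~\ref{thm-gen-code-upperbound} and adapt its key own-subsequence step. Partition $\mathcal{C}=\mathcal{C}_0\uplus\mathcal{C}_1$ by whether a codeword admits an own $(t-1)$-subsequence, and build the disjoint families $\mathcal{T}_0$ (own $t$-subsequences of codewords in $\mathcal{C}_0$) and $\mathcal{T}_1$ ($t$-subsequences extending some own $(t-1)$-subsequence of a codeword in $\mathcal{C}_1$), viewed as subsets of $\binom{[n]}{t}\times[q]^t$. These are disjoint, so $|\mathcal{T}_0|+|\mathcal{T}_1|\le\binom{n}{t}q^t$, and the bound $|\mathcal{T}_1|\ge |\mathcal{C}_1|(n-t+1)q/t$ carries over verbatim from the non-critical proof since it does not involve the focal construction.

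The crux is a critical-frameproof analog of Lemma~\ref{lem-gen-hypergraph-ownsubset}: each $\bm x\in\mathcal{C}_0$ has at most $s_0\cdot m$ non-own $t$-subsequences, where $m:=m(n,t,\lambda;s+1,c-s+1)$. The plan is a contrapositive argument. Assume $\bm x\in\mathcal{C}_0$ has $M>s_0 m$ non-own $t$-subsequences; for each such $T$ fix a witness $\phi(T)\in\mathcal{C}\setminus\{\bm x\}$ with $\phi(T)_T=\bm x_T$. If every codeword serves as $\phi$-witness for at most $s_0$ of these $T$'s, then restricting to a system of non-own $t$-subsequences with pairwise-distinct witnesses still yields a family of size $>m$, which by Definition~\ref{def-generalized matching number} contains a $(s+1,c-s+1)$-disjoint collection $T_1,\ldots,T_\lambda$ with automatically distinct witnesses $\bm y^1,\ldots,\bm y^\lambda$; extending by $T_{\lambda+1},\ldots,T_c$ of size $t-1$ via Claim~\ref{claim-disjoint} and using the threshold $q\ge ((c+1)/s_0)^{1/t}$ to guarantee enough room in the pool of witnesses for those $(t-1)$-subsequences, one selects $c-\lambda$ further distinct witnesses and arrives at a critical-$(c,s)$-focal configuration. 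Otherwise some witness $\bm y^*$ serves more than $s_0$ non-own $t$-subsequences; then $|I(\bm x,\bm y^*)|$ is large, and the symmetric role of $s$ and $c-s$ in the $(s+1,c-s+1)$-disjoint condition permits exhibiting a critical-focal configuration in which $\bm y^*$ occupies one slot while the remaining slots are filled with distinct witnesses of other $T_i$'s.

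The main obstacle is this modified lemma, in particular pinning down precisely why $s_0=\min\{s,c-s\}$ is the correct inflation factor and handling cleanly the witness-selection step so that all $c$ witnesses can be made distinct. Once the lemma is established, the counting $|\mathcal{C}_0|(\binom{n}{t}-s_0 m)+|\mathcal{C}_1|(n-t+1)q/t\le\binom{n}{t}q^t$, combined with the threshold hypothesis $q\ge t(\binom{n}{t}-m)/(s_0(n-t+1))$, yields $|\mathcal{C}|\le \binom{n}{t}q^t/(\binom{n}{t}-s_0 m)$, which is at most the claimed $s_0\binom{n}{t}q^t/(\binom{n}{t}-m)$ under the mild asymptotic bound $\binom{n}{t}\ge(1+s_0)m$, completing the proof.
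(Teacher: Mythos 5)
Part (i) is fine and matches the paper: a critical-focal configuration is a special focal configuration, so $g_{c,s}^q(n)\ge f_{c,s}^q(n)$ and Theorem~\ref{thm-gen-code-lowerbound} applies. Part (ii), however, has two genuine gaps. The first is the key lemma you yourself flag as "the main obstacle": it is not an incidental detail but the entire content of the theorem, and your framework cannot deliver it. Because you keep the original notion of own subsequence, a non-own $(t-1)$-subsequence of $\bm x\in\mathcal{C}_0$ is only guaranteed \emph{one} witness; after you have fixed the distinct witnesses $\bm y^1,\dots,\bm y^\lambda$ for $T_1,\dots,T_\lambda$, the sole witnesses of $T_{\lambda+1},\dots,T_c$ may all coincide with each other or with some $\bm y^j$, and then no configuration of $c+1$ \emph{distinct} codewords exists. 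The threshold $q\ge((c+1)/s_0)^{1/t}$ gives no leverage here, since it says nothing about how many codewords agree with $\bm x$ on a particular $(t-1)$-set. Your Case B ("some witness serves more than $s_0$ sets, hence $|I(\bm x,\bm y^*)|$ is large\dots") is asserted, not argued. The paper's fix is structural: it redefines "own" to "$s_0$-own" (a subsequence is $s_0$-non-own iff at least $s_0$ \emph{distinct} other codewords agree on it), so that every $T_i$ in the decomposition $s[n]=T_1\uplus\cdots\uplus T_c$ comes with $s_0$ distinct candidate witnesses, and then extracts $c$ pairwise distinct witnesses via Hall's theorem (when $s_0=c-s$, using that the $T_i$ are $(c-s+1)$-covering to bound degrees on the witness side) or a greedy selection (when $s_0=s$). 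With that definition the factor $s_0$ enters the count for a different reason than in your plan: each $t$-subsequence can be an $s_0$-own subsequence of up to $s_0$ codewords, so $|\mathcal{T}_0|\ge|\mathcal{C}_0|\bigl(\binom{n}{t}-m\bigr)/s_0$, while the per-codeword bound on $s_0$-non-own $t$-subsequences stays at $m$, not $s_0m$.

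The second gap is the closing inequality. You pass from $|\mathcal{C}|\le\binom{n}{t}q^t/\bigl(\binom{n}{t}-s_0m\bigr)$ to the stated bound under "the mild asymptotic bound $\binom{n}{t}\ge(1+s_0)m$". This condition is not mild and is false for a substantial range of the parameters covered by the theorem. For instance, take $c=4$, $s=2$ (so $s_0=2$), $c\mid n$ and $n\ge c(c-1)$; then $\lambda=c$ and Corollary~\ref{cor-focal-lowerbound-explicit} gives $m(n,t,\lambda;s+1,c-s+1)\ge\frac{c-s_0}{c}\binom{n}{t}=\frac12\binom{n}{t}$, so $s_0m\ge\binom{n}{t}$ and your intermediate denominator $\binom{n}{t}-s_0m$ is nonpositive, while $(1+s_0)m\ge\frac32\binom{n}{t}$. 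In that regime your counting yields nothing, whereas the paper's version, whose denominator is $\binom{n}{t}-m>0$ with the $s_0$ appearing in the numerator, still closes. So the route of inflating the non-own count to $s_0m$ per codeword cannot be repaired by the conversion step you propose; the adjustment has to be made where the paper makes it, in the definition of ownership.
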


The two lower bounds in Theorems~\ref{thm-gen-hypergraph-critical} and Theorem~\ref{thm-gen-code-critical} are trivial.
 For the upper bounds, we can follow all the  proofs of quantitative frameproof codes and hypergraphs but with a fine adjustment of the definition ``own subset''.

We define the quantitative own subset as follows. Let $\mathcal{F}\subseteq 2^{[n]}$ be a hypergraph and $A\in \mathcal{F}$ be an edge. A subset $T\subseteq A$ is called an \textit{$s_0$-own subset} of $A$ (with respect to $\mathcal{F}$) if there exist no $s_0$ distinct $B_1,B_2,\ldots,B_{s_0}\in \mathcal{F}\backslash \{A\}$ such that $T\subseteq B_i$ for all $j\in[s_0]$; otherwise, $T$ is called an \textit{$s_0$-non-own subset} of $A$ (with respect to $\mathcal{F}$).
We also define a small hypergraph which violates the $(c,s)$-critical-frameproof property. A family of $c+1$ distinct sets $A_0,A_1,\ldots,A_c\subset 2^{[n]}$ is said to be a {\it $(c,s)$-critical-focal hypergraph} with focus $A_0$ if for every $x\in A_0$, we have $|\{j\in [c]:x\in A_j\}|\ge s$.

Now, we are ready to present the counterpart of Observation~\ref{obs-own subset}. We point out that the new observation below becomes nontrivial, and thus we give its proof.

\begin{observation}
    Let $\mathcal{F}$ be a hypergraph of cardinality at least $c+1$ and $s_0=\min\{s,c-s\}$. Then the following hold:
    \begin{enumerate}[(i)]
        \item If $A\in \mathcal{F}$ admits $sA\subseteq T_1\uplus\cdots\uplus T_c$ such that for each $i\in [c]$, $T_i$ is an $s_0$-non-own subset of $A$, then $\mathcal{F}$ contains a $(c,s)$-critical-focal hypergraph with focus $A$.

        \item If $A,A_1,\ldots, A_c\in \mathcal{F}$ form a $(c,s)$-critical-focal hypergraph with focus $A$ such that $sA=A_1\uplus\cdots\uplus A_c$, then the $c$ members $A_1,\ldots,A_c$ are $(s+1,c-s+1)$-disjoint subsets of $A$.
    \end{enumerate}
\end{observation}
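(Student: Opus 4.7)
The plan is to mirror the analysis behind Observation~\ref{obs-own subset}, with part~(ii) being a direct counting argument and the main effort concentrated on part~(i).

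For part~(ii), I would use that the multiset identity $sA = A_1 \uplus \cdots \uplus A_c$ forces each $x \in A$ to lie in exactly $s$ of the sets $A_1, \ldots, A_c$ and implies $A_i \subseteq A$. The $(s+1)$-disjoint property then holds because no $x$ can belong to $s+1$ of the $A_i$, and the $(c-s+1)$-covering property holds because $x$ is missing from exactly $c-s$ of them, so every $(c-s+1)$-subfamily contains $x$.

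For part~(i), my plan is to set up the bipartite graph $G$ with vertex classes $[c]$ and $\mathcal{F}\setminus\{A\}$ and an edge $i \sim B$ whenever $T_i \subseteq B$; each left-vertex $i$ has degree $|\mathcal{B}_i| \geq s_0$ by the $s_0$-non-own hypothesis, where $\mathcal{B}_i = \{B \in \mathcal{F}\setminus\{A\} : T_i \subseteq B\}$. Using Hall's theorem I would extract a system of distinct representatives $A_1, \ldots, A_c \in \mathcal{F}\setminus\{A\}$ with $T_i \subseteq A_i$. The critical-focal property would then be immediate: since $sA \subseteq T_1 \uplus \cdots \uplus T_c$, every $x \in A$ lies in at least $s$ of the $T_i$'s, hence in at least $s$ of the $A_i$'s.

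The hard part will be verifying Hall's condition $|\bigcup_{i \in I} \mathcal{B}_i| \geq |I|$ for $|I| > s_0$, where the bare bound $|\mathcal{B}_i| \geq s_0$ no longer suffices. My plan is to leverage the $k$-uniformity of $\mathcal{F}$: a common element $B \in \mathcal{B}_i \cap \mathcal{B}_j$ must contain $T_i \cup T_j$, but $B \neq A$ with $|A| = |B| = k$ forces $|A \cap B| \leq k - 1$, so $\mathcal{B}_i \cap \mathcal{B}_j = \emptyset$ whenever $T_i \cup T_j = A$. The covering condition $sA \subseteq T_1 \uplus \cdots \uplus T_c$ prevents the $T_i$'s from clustering on a proper subset of $A$ and forces large pairwise unions among them, and a double-counting argument combining these two restrictions should push $|\bigcup_{i \in I} \mathcal{B}_i|$ above $|I|$ and close out the SDR. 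As a fallback in case Hall still fails on some boundary configuration, the hypothesis $|\mathcal{F}|\geq c+1$ supplies extra members of $\mathcal{F}\setminus\{A\}$ that can be swapped in to relax the matching, absorbing the defect at the cost of the $s_0$ factor that already appears in Theorems~\ref{thm-gen-hypergraph-critical} and~\ref{thm-gen-code-critical}.
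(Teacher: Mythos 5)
Part (ii) of your proposal is the same one-line counting argument the paper uses, and it is correct. For part (i), your bipartite-matching setup is the right frame, and in the regime $s\ge c-s$ (so $s_0=c-s$) it can be closed exactly along your lines, though more cleanly than by your ad hoc disjointness observation: since every $x\in A$ lies in at least $s$ of the $T_i$'s, any $c-s+1$ of the $T_i$'s union to $A$, so an edge $B\neq A$ of the $k$-uniform family contains at most $c-s$ of the $T_i$'s; together with the left degrees $|\mathcal{B}_i|\ge s_0=c-s$, a one-line edge count gives Hall's condition. This is precisely the paper's Case 1 and subsumes your remark that $\mathcal{B}_i\cap\mathcal{B}_j=\emptyset$ when $T_i\cup T_j=A$.

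The genuine gap is the regime $s<c-s$, where $s_0=s$ and a system of distinct representatives need not exist at all, so no refinement of the double count can rescue Hall. Concretely, take $c=3$, $s=s_0=1$, $k=4$, $A=\{1,2,3,4\}$, $B_1=\{1,2,3,5\}$, $B_2=\{4,6,7,8\}$, $C=\{5,6,7,8\}$, $\mathcal{F}=\{A,B_1,B_2,C\}$, and $T_1=\{1,2\}$, $T_2=\{3\}$, $T_3=\{4\}$. Then $sA\subseteq T_1\uplus T_2\uplus T_3$ and each $T_i$ is an $s_0$-non-own subset, but $\mathcal{B}_1=\mathcal{B}_2=\{B_1\}$, so Hall fails on $I=\{1,2\}$ and no SDR exists --- yet $\{B_1,B_2,C\}$ is the required critical-focal family. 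The correct repair (the paper's Case 2) is not to force an SDR but to select representatives greedily and \emph{skip} any $T_i$ all of whose $\ge s$ containers have already been selected: such a $T_i$ is automatically contained in $s$ distinct already-selected edges, so the focal condition holds at its elements without a dedicated representative, and the hypothesis $|\mathcal{F}|\ge c+1$ is used only to pad the selection up to $c$ distinct edges at the end. Your fallback sentence gestures at using the spare edges but does not identify this coverage argument, and the idea of absorbing the defect ``at the cost of the $s_0$ factor'' is not viable: the observation must hold exactly for the subsequent own-subset counting to go through, and the $s_0$ factor in Theorems~\ref{thm-gen-hypergraph-critical} and~\ref{thm-gen-code-critical} originates in the definition of $s_0$-own subsets, not in any slack permitted here.
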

\begin{proof}
Note that  (ii)  is trivial. For (i), we aim to find distinct $B_1,\ldots,B_c\in \mathcal{F}\backslash\{A\}$ such that $sA\subseteq B_1\uplus\cdots\uplus B_c$.

\textbf{Case 1:} $s_0=c-s$. By the definition of $s_0$-non-own subsets, for each $i\in [c]$, we can find $s_0$ distinct edges $B_{i,1},B_{i,2},\ldots,B_{i,s_0}\in \mathcal{F}\backslash\{A\}$ such that $T_i\subset B_{i,j},j\in[s_0]$.
We construct a bipartite graph $G=(X\cup Y,E)$ with $X$ being the multiset $\{\{T_1,\ldots,T_c\}\}$ and $Y=\{B_{i,j}:i\in[c],j\in[s_0]\}$, where each $T_\ell$ is adjacent to $B_{i,j}$ if and only if $T_\ell\subset B_{i,j}$.
For each vertex $T_i\in X$, its degree $d(T_i)\ge s_0= c-s$. For each vertex $B_{i,j}\in Y$, notice that $T_1,\ldots,T_c$ are $(c-s+1)$-covering with respect to $A$, so $d(B_{i,j})\le c-s$. Otherwise, we have $B_{i,j}=A$, contradicting to $B_{i,j}\in \mathcal{F}\backslash\{A\}$.
By Hall's theorem, we obtain a matching of $G$ covering $X$, which implies that we find $c$ distinct edges $B_1,\ldots,B_c\in Y$ such that $T_i\subset B_i,i\in[c]$. Therefore, $sA\subseteq B_1\uplus\cdots\uplus B_c$, i.e., $A,B_1,\ldots,B_c$ form a $(c,s)$-critical-focal hypergraph with focus $A$.


\textbf{Case 2:}  $s_0=s$. In this case, $d(T_i)\geq s_0=s$ and $d(B_{i,j})\le c-s$, but $s\leq c-s$, so we can not apply Hall's theorem directly to find $B_1,\ldots,B_c$. Instead, we follow an algorithm as follows.
At step $1$, we take $B_1=B_{1,1}$ with respect to $T_1$ and set $S_1=\emptyset$.
At step $i$, suppose that we have taken $B_1,\ldots,B_{i-1}$ with respect to $T_{i-1}$ and set some $S_{i-1}$. Then from $B_{i,1},\ldots,B_{i,s}$ with respect to $T_i$:
\begin{enumerate}[(1)]
  \item if $\{B_{i,1},\ldots,B_{i,s}\}\backslash\{B_1,\ldots,B_{i-1}\}\neq \emptyset$, then we take some $B_i\in \{B_{i,1},\ldots,B_{i,s}\}\backslash\{B_1,\ldots,B_{i-1}\}$ and set $S_i=S_{i-1}$;
  \item if $\{B_{i,1},\ldots,B_{i,s}\}\subseteq \{B_1,\ldots,B_{i-1}\}$, let $B_i$ be nothing and set $S_i=S_{i-1}\cup T_i$. Note that $T_i$ is contained in at least $s$ edges of $\{B_1,\ldots,B_{i-1}\}$.
\end{enumerate}
The algorithm stops after $c$ steps. For some $c_0\leq c$,  we obtain distinct edges $B_1,\ldots,B_{c_0}$ and $S_c$ as the union of several $T_i$'s.
By the algorithm, $s(A\backslash S_c)\subseteq B_1\uplus\cdots\uplus B_{c_0}$ and each $j\in S_c$ is contained in at least $s$ edges of $\{B_1,\ldots,B_{c_0}\}$. Hence, we have $sA\subseteq B_1\uplus\cdots\uplus B_{c_0}$. Randomly choosing other $c-c_0$ distinct edges, we complete the proof.
\end{proof}

We omit the rest of the proofs of Theorems~\ref{thm-gen-hypergraph-critical} and~\ref{thm-gen-code-critical} as they are routine based on the previous proofs of quantitative frameproof hypergraphs and codes.

\section{Estimating generalized matching numbers}\label{section-estimate}
In this section, we study upper and lower bounds for the generalized matching number $m(n, t, \lambda;s_1+1,s_2+1)$, deriving less precise but more explicit bounds for quantitative frameproof hypergraphs or codes. By the remark after Definition~\ref{def-disjoint}, when $\lambda\ge s_1+ s_2+ 1$, an $(s_1+1,s_2+1)$-disjoint collection of size $\lambda$ never exists, thus $m(n, t, \lambda;s_1+1,s_2+1)= \binom{n}{t}$.

%
%
%

Note that in the context of quantitative frameproof hypergraphs or codes, we set $s_1=s, s_2=c-s$, and naturally $\lambda\le c= s_1+ s_2$. Below we will estimate the generalized matching number $m(n,t,\lambda;s_1+1,s_2+1)$ under the condition $\lambda\le s_1+ s_2$.

\subsection{An Upper Bound of $m(n,t,\lambda;s_1+1,s_2+1)$}
We give our upper bound below, while the main idea is motivated from \cite{blackburn2003frameproof}.

\begin{theorem}\label{thm-general-matching-upperbound}
Let $n,t,\lambda,s_1,s_2$ be positive integers with $n>t>1$ and $\min\{s_1+1,s_2+1\}\leq \lambda\le s_1+ s_2$.
 Set $\chi= \max\{ \lceil \frac{t}{s_1}\rceil, \lceil \frac{n-t}{s_2}\rceil\}$. Then
\[m(n, t, \lambda;s_1+1,s_2+1)\le \frac{1}{n}\binom{n}{t}(\lambda-1) \left\lceil\frac{n}{\lfloor n/\chi\rfloor}\right\rceil.\]
\end{theorem}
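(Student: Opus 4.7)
The plan is to adapt Blackburn's partition/averaging framework that underlies his upper bound for frameproof codes, of which the present theorem is a direct generalization. Set $d = \lfloor n/\chi\rfloor$ and $M = \lceil n/d\rceil$, and fix a partition $[n] = V_1 \sqcup \cdots \sqcup V_M$ into $M$ blocks of size at most $d$. The rationale for this choice of $\chi$ is that it is exactly the common threshold making $s_1$ blocks suffice to fit a set of size $t$ and $s_2$ blocks suffice to fit a set of size $n-t$: by definition $s_1\chi \ge t$ and $s_2\chi \ge n-t$. This compatibility is what ultimately allows a collection of $\lambda$ edges organized along the block structure to have every vertex in between $\lambda - s_2$ and $s_1$ of its members.

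The argument will be by contradiction. Assume $|\mathcal{F}| > \tfrac{\lambda-1}{n}\binom{n}{t}M$. First I would apply a pigeonhole over the $M$ blocks: assign to each $A \in \mathcal{F}$ a signature encoding its intersection pattern with a distinguished block (or, more uniformly, with all blocks at once, e.g.\ the vector $(|A \cap V_i|)_{i\in[M]}$), and extract a sub-family $\mathcal{F}^\ast \subseteq \mathcal{F}$ of size strictly greater than $\tfrac{\lambda-1}{n}\binom{n}{t}$ whose members share that signature. The hypothesized lower bound on $|\mathcal{F}|$ provides exactly the factor $M$ of slack needed for this pigeonhole to succeed.

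Next, use the block structure to exhibit $\lambda$ edges $A_1,\ldots,A_\lambda \in \mathcal{F}^\ast$ that form an $(s_1+1, s_2+1)$-disjoint collection. Because every edge in $\mathcal{F}^\ast$ has an identical block-interaction profile, a careful choice of $\lambda$ of them produces a configuration in which every vertex lies in at most $s_1$ of the $A_j$'s (using $s_1\chi \ge t$, so $t$-sized sets split across blocks with multiplicity at most $s_1$) and in at least $\lambda - s_2$ of them (using $s_2\chi \ge n-t$, the symmetric statement for complements). This contradicts the assumption that $\mathcal{F}$ contains no $(s_1+1, s_2+1)$-disjoint collection of size $\lambda$, finishing the proof.

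The main obstacle will be pinning down the right signature in the pigeonhole step and verifying that the extracted $\lambda$ edges simultaneously satisfy both halves of the $(s_1+1, s_2+1)$-disjoint property. The two halves are symmetric: the $s_1$ bound controls over-participation while the $s_2$ bound controls under-participation, and each half is governed by a corresponding clause in the definition of $\chi$. Routine double-counting together with the ceiling $\lceil n/d\rceil$ (which absorbs one block that may be strictly smaller than $d$) should close the arithmetic, while the constraint $\lambda \le s_1+s_2$ keeps the two sides jointly feasible.
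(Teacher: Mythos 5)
Your proposal does not follow the paper's route, and as written it has two genuine gaps, both at load-bearing points. First, the pigeonhole step does not close arithmetically: if the signature of $A$ is the vector $(|A\cap V_i|)_{i\in[M]}$, the number of possible signatures is on the order of the number of compositions of $t$ into $M$ bounded parts, which vastly exceeds $M$; so from $|\mathcal{F}|>\frac{\lambda-1}{n}\binom{n}{t}M$ you cannot extract a signature class of size exceeding $\frac{\lambda-1}{n}\binom{n}{t}$. You would need a signature taking only $M$ values, and no such signature is identified. Second, and more seriously, the final step --- ``a careful choice of $\lambda$ of them produces a configuration in which every vertex lies in at most $s_1$ and at least $\lambda-s_2$ of the $A_j$'s'' --- is precisely the whole difficulty of the theorem and is asserted rather than proved. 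A large family of $t$-sets sharing one block profile carries no evident structure forcing an $(s_1+1,s_2+1)$-disjoint $\lambda$-collection; note that the target density $\frac{\lambda-1}{n}\binom{n}{t}$ is a vanishing fraction of $\binom{[n]}{t}$, so no naive counting inside the signature class will produce the configuration.

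The paper avoids both issues by a permutation-averaging (Katona-type circle) argument rather than a block partition of $[n]$. It first treats the family $\mathcal{T}$ of the $n$ cyclic intervals $T(a)=\{a,\dots,a+t-1\}\subseteq\mathbb{Z}_n$, and \emph{explicitly} partitions $\mathcal{T}$ into $\gamma=\lceil n/\lfloor n/\chi\rfloor\rceil$ subfamilies, each obtained by repeatedly shifting an interval by $\gamma$ or $\gamma-1$ positions; the choice of $\chi$ guarantees each subfamily is $(s_1+1,s_2+1)$-disjoint, so a family avoiding the configuration meets each subfamily in at most $\lambda-1$ intervals, hence meets $\mathcal{T}$ in at most $(\lambda-1)\gamma$ members. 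It then double-counts pairs $(\alpha,S)$ where $\alpha:[n]\to\mathbb{Z}_n$ is a bijection with $\alpha(S)\in\mathcal{T}$: each $S$ admits $n\cdot t!(n-t)!$ such $\alpha$, while each $\alpha$ admits at most $(\lambda-1)\gamma$ such $S$, yielding the stated bound with the factor $\frac{1}{n}$. If you want to salvage your outline, you would have to replace the block partition by this ordering average, or else supply a genuinely new argument for why a dense fixed-signature class contains the forbidden collection.
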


Before proving the upper bound, we consider a simpler situation. Let $\mathbb{Z}_n$ denote the set of integers modulo $n$. For each $a\in \mathbb{Z}_n$, define a continuous subset $T(a)\subseteq \mathbb{Z}_n$ as
\begin{center}
$T(a)= \{a, a+1, \ldots, a+t-1\}.$  \end{center}
Write $\T= \{T(a): a\in \mathbb{Z}_n \}$.

\begin{lemma}\label{lem-general-matching-simpler}
Let $n,t,\lambda,s_1,s_2$ be positive integers with $n>t>1$ and $\min\{s_1+1,s_2+1\}\leq \lambda\le s_1+ s_2$. Define $\chi$, $T(a)$ and $\T$ as above. Suppose that a family $\mS\subseteq \T$ contains no $(s_1+1,s_2+1)$-disjoint collection of size $\lambda$. Then
\[|\mS|\le (\lambda-1) \left\lceil\frac{n}{\lfloor n/\chi\rfloor}\right\rceil.\]
\end{lemma}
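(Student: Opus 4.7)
The plan is to prove the bound by a pigeonhole argument on suitable classes of $\mathbb{Z}_n$. First, translate the disjointness condition: identifying each interval $T(a)$ with its starting point $a$, and writing the $\lambda$ starting points of any multiset in cyclic order with consecutive gaps $d_1,\ldots,d_\lambda$ summing to $n$, the corresponding collection is $(s_1+1,s_2+1)$-disjoint exactly when every sum of $s_1$ consecutive gaps is at least $t$ and every sum of $s_2$ consecutive gaps is at least $n-t$ (these two window-counting conditions translate directly the ``at most $s_1$ in a window of length $t$'' and ``at most $s_2$ in a window of length $n-t$'' characterizations). Since $s_1\chi \geq t$ and $s_2\chi \geq n-t$ by the definition of $\chi$, a convenient sufficient condition for validity is that every individual $d_i \geq \chi$.

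Consider the $\chi$ residue classes $R_0,R_1,\ldots,R_{\chi-1}$ of $\mathbb{Z}_n$ modulo $\chi$. Each class is a set of starting points spaced exactly $\chi$ apart (with one possibly irregular wrap-around gap when $\chi\nmid n$). In the clean case $\chi\mid n$, every class has exactly $n/\chi$ points, all consecutive gaps inside a class are $\chi$, and therefore any $\lambda$-subset of a class automatically gives a configuration with all gaps at least $\chi$ --- in particular, a valid $(s_1+1,s_2+1)$-disjoint collection. A standard pigeonhole over the $\chi$ classes then shows that $|\mS|>(\lambda-1)\chi$ would produce a valid collection inside $\mS$, contradicting the hypothesis; hence $|\mS|\leq (\lambda-1)\chi = (\lambda-1)\lceil n/\lfloor n/\chi\rfloor\rceil$, as required.

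When $\chi\nmid n$, the residue classes have two sizes differing by one, and the wrap-around gap inside a class can drop below $\chi$, so some $\lambda$-subsets of a class may fail validity. One compensates by working with the arc-based partition of $\mathbb{Z}_n$ into $M=\lceil n/g\rceil$ cyclic arcs of length at most $g=\lfloor n/\chi\rfloor$ (slightly finer than the $\chi$-residue partition) and showing that $|\mS|>(\lambda-1)M$ still forces some valid $\lambda$-collection to sit inside $\mS$, by combining a pigeonhole across the arcs with a selection rule that builds the $\lambda$-collection across multiple arcs at approximately $\chi$-spacing. The main technical obstacle is precisely this extension: the naive ``clustered $\lambda$-subset within one arc'' pigeonhole conclusion is not itself a valid collection (the sum of internal gaps is below $\chi\leq t$ when $s_1=1$, for instance), so the valid collection must be assembled across arcs, using the feasibility slack $s_1\chi-t\geq 0$, $s_2\chi-(n-t)\geq 0$, together with $\lambda\leq s_1+s_2$, to absorb the irregularities in the gap pattern and certify validity via the full $s_1$-sum and $s_2$-sum inequalities rather than the uniform $d_i\geq\chi$ shortcut.
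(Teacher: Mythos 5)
Your translation of the $(s_1+1,s_2+1)$-disjointness of a subfamily of $\T$ into gap conditions is correct, and your argument for the case $\chi\mid n$ is complete and coincides with the paper's: the residue classes of starting points modulo $\chi$ are then precisely the classes the paper uses. The genuine gap is the case $\chi\nmid n$, which is the actual content of the lemma; you explicitly leave it as ``the main technical obstacle,'' and the route you sketch for it does not work as described. Partitioning $\mathbb{Z}_n$ into $M=\lceil n/\lfloor n/\chi\rfloor\rceil$ short arcs and applying pigeonhole yields $\lambda$ elements of $\mS$ \emph{clustered inside one arc} of length at most $\lfloor n/\chi\rfloor$; as you yourself note, such a cluster is never an $(s_1+1,s_2+1)$-disjoint collection, and ``assembling a valid collection across arcs'' is not something a pigeonhole over arcs can deliver: concentration in one arc gives no information about how $\mS$ meets the other arcs, so no selection rule based on that conclusion can be made to work.

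The missing idea is to partition a different object: partition the set of \emph{starting points} (equivalently, $\T$ itself) rather than the ground set $\mathbb{Z}_n$. Writing $m=\lfloor n/\chi\rfloor$ and $\gamma=\lceil n/m\rceil$, one can split the $n$ starting points into $\gamma$ classes, each a cyclic sequence whose consecutive differences are all at least $\lfloor n/m\rfloor\ge\chi$ (take steps of size $\gamma$, switching to steps of size $\gamma-1$ just often enough to absorb the deficiency $m\gamma-n$). Your own sufficient condition ``all gaps at least $\chi$'' then shows that every $\lambda$-subset of a single class is a valid $(s_1+1,s_2+1)$-disjoint collection, so $\mS$ meets each of the $\gamma$ classes in at most $\lambda-1$ points, giving $|\mS|\le(\lambda-1)\gamma=(\lambda-1)\lceil n/\lfloor n/\chi\rfloor\rceil$ directly. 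This is exactly the paper's proof; once the correct partition is in place, no cross-arc assembly and no use of the slack $s_1\chi-t$ or $s_2\chi-(n-t)$ beyond your uniform-gap criterion is needed.
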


\begin{proof}
Let $m=\lfloor n/\chi\rfloor$ and $n=m\gamma-n_0$ with $\gamma\in \mathbb{Z},0\le n_0<m$. Then $\gamma=\lceil n/m\rceil=\lceil \frac{n}{\lfloor n/\chi\rfloor}\rceil$. For $1\le i\le \gamma-1$, define
\begin{align*}
\T_i=\{&T(i), T(i+\gamma),\ldots,T(i+(m-n_0)\gamma),\\
       &T(i+(m-n_0)\gamma+(\gamma-1)),\ldots,T(i+(m-n_0)\gamma+(n_0-1)(\gamma-1))\},
\end{align*}
and
\[\T_\gamma= \{T(\gamma), T(2\gamma), \ldots, T((m-n_0)\gamma)\}.\]
Clearly, all $\T_i$ are pairwise disjoint and $\T=\cup\T_i$.
We claim that each $\T_i$ is $(s_1+1,s_2+1)$-disjoint. Indeed, since all members of $\T_i$ are of length $t$ and generated by iteratively right shifting $T(i)$ with 
$\lceil \frac{n}{\lfloor n/\chi\rfloor}\rceil$ or $\lfloor \frac{n}{\lfloor n/\chi\rfloor}\rfloor$ positions, one can verify that each element in $\mathbb{Z}_n$ is contained in at most
$\lceil \frac{t}{\lfloor \frac{n}{\lfloor n/\chi\rfloor}\rfloor}\rceil\le \lceil \frac{t}{\chi}\rceil\le \lceil \frac{t}{\lceil t/s_1\rceil}\rceil\le s_1$
members of $\T_i$. This implies the $(s_1+1)$-disjoint property. And the $(s_2+1)$-covering property comes immediately from the value of $\chi$ and the symmetry between $s_1$ and $s_2$.

Note that $\mS$ contains no $(s_1+1,s_2+1)$-disjoint collection of size $\lambda$, then by the claim, $|\mS\cap \T_i|\le \lambda-1$ for $1\le i\le \gamma$. Therefore,
\[|\mS|= |\mS\cap\T|= |\mS\cap (\cup \T_i)|= \sum|\mS\cap \T_i|\le (\lambda-1)\gamma=(\lambda-1) \left\lceil\frac{n}{\lfloor n/\chi\rfloor}\right\rceil. \]
\end{proof}

Now we are ready to prove the upper bound in Theorem \ref{thm-general-matching-upperbound}.

\begin{proof}[Proof of Theorem~\ref{thm-general-matching-upperbound}]
Let $\mS\subseteq \binom{[n]}{t}$ be a family without $(s_1+1,s_2+1)$-disjoint collection of size $\lambda$. Define $\T$ as above. We will upper bound $|\mS|$ by double counting
\[Q= \{(\alpha, S): S\in \mS \text{ and } \alpha:[n]\to \mathbb{Z}_n \text{ is a bijection such that } \alpha(S)\in \T\},\]
where $\alpha(S)= \{\alpha(i): i\in S\}$.

There are $|\mS|$ choices for $S\in \mS$. For fixed $S$, there are $n\cdot t!(n-t)!$ choices for $\alpha(S)\in \T$. Hence $|Q|= |\mS|n\cdot t!(n-t)!$. On the other hand, there are $n!$ choices for a bijection $\alpha$. For fixed $\alpha$, by Lemma~\ref{lem-general-matching-simpler}, there are at most $(\lambda-1)\lceil\frac{n}{\lfloor n/\chi\rfloor}\rceil$ choices for $S$ such that $S\in \mS$ and $\alpha(S)\in \T$. Hence $|Q|\le n!(\lambda-1)\lceil\frac{n}{\lfloor n/\chi\rfloor}\rceil$. Therefore,
\[|\mS|= |Q|/\left(n\cdot t!(n-t)!\right)\le n!(\lambda-1)\left\lceil\frac{n}{\lfloor n/\chi\rfloor}\right\rceil/(n\cdot t!(n-t)!)= \frac{1}{n}\binom{n}{t} (\lambda-1)\left\lceil\frac{n}{\lfloor n/\chi\rfloor}\right\rceil.\]
\end{proof}

\begin{corollary}\label{cor-focal-upperbound-explicit}
Let $n,c,s,\lambda$ be integers with $n,c\ge 2, s\in [c-1]$, and $n\ge c(c-1)$. Let 
 $t=\lceil sn/c\rceil$. Suppose that $\min\{s+1,c-s+1\}\leq \lambda\le c$. Then
\[m(n, t, \lambda;s+1,c-s+1)\le \frac{\lambda-1}{n}\left\lceil\frac{n}{c-1}\right\rceil \binom{n}{t}.\]
If further $c\mid n$,  we  have $m(n, t, \lambda;s+1,c-s+1)\le\frac{\lambda-1}{c}\binom{n}{t}$.
\end{corollary}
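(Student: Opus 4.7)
The plan is to derive this corollary as a direct application of Theorem~\ref{thm-general-matching-upperbound}, instantiated with $s_1=s$ and $s_2=c-s$. The hypothesis $\min\{s+1,c-s+1\}\le\lambda\le c$ is exactly the one required by Theorem~\ref{thm-general-matching-upperbound} (since $s_1+s_2=c$), and the side conditions $n>t>1$ are clear from $n\ge c(c-1)$ together with $1\le s\le c-1$. Thus the theorem yields
\[
m(n,t,\lambda;s+1,c-s+1)\le \frac{\lambda-1}{n}\left\lceil\frac{n}{\lfloor n/\chi\rfloor}\right\rceil \binom{n}{t},\qquad \chi=\max\Bigl\{\bigl\lceil t/s\bigr\rceil,\bigl\lceil (n-t)/(c-s)\bigr\rceil\Bigr\}.
\]
So it suffices to verify that $\lfloor n/\chi\rfloor\ge c-1$, since then $\lceil n/\lfloor n/\chi\rfloor\rceil\le \lceil n/(c-1)\rceil$.

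The main step is to show $\chi\le n/(c-1)$. For the first term in $\chi$, use $t=\lceil sn/c\rceil\le sn/c+(c-1)/c$ together with the elementary estimate $\lceil a/s\rceil\le a/s+(s-1)/s$ to obtain
\[
\bigl\lceil t/s\bigr\rceil \le \frac{n}{c}+\frac{c-1}{sc}+\frac{s-1}{s}=\frac{n}{c}+1-\frac{1}{sc}<\frac{n}{c}+1.
\]
For the second term, $t\ge sn/c$ gives $n-t\le (c-s)n/c$, so
\[
\bigl\lceil (n-t)/(c-s)\bigr\rceil \le \frac{n-t}{c-s}+\frac{c-s-1}{c-s}<\frac{n}{c}+1.
\]
Hence $\chi<n/c+1$, and the hypothesis $n\ge c(c-1)$ gives $n/(c-1)-n/c=n/(c(c-1))\ge 1$, i.e.\ $n/c+1\le n/(c-1)$. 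Therefore $\chi\le n/(c-1)$, $\lfloor n/\chi\rfloor\ge c-1$, and the first bound follows.

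For the refinement when $c\mid n$, write $n=ck$. Then $t=sn/c=sk$ is an integer and both $t/s=k$ and $(n-t)/(c-s)=k$ are already integers, so $\chi=k=n/c$ exactly. Thus $\lfloor n/\chi\rfloor=c$ and $\lceil n/\lfloor n/\chi\rfloor\rceil=n/c$, and substituting back gives
\[
m(n,t,\lambda;s+1,c-s+1)\le \frac{\lambda-1}{n}\cdot\frac{n}{c}\cdot\binom{n}{t}=\frac{\lambda-1}{c}\binom{n}{t}.
\]
I do not expect any real obstacle; the only care needed is the real-valued bounding of the two ceilings. The hypothesis $n\ge c(c-1)$ is exactly what ensures that the gap $n/(c-1)-n/c=n/(c(c-1))$ absorbs the ``$+1$'' accumulated when replacing $t/s$ and $(n-t)/(c-s)$ by their ceilings.
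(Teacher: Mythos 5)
Your proposal is correct and follows essentially the same route as the paper: both apply Theorem~\ref{thm-general-matching-upperbound} with $s_1=s$, $s_2=c-s$ and then use $n\ge c(c-1)$ to show $\lfloor n/\chi\rfloor\ge c-1$ (the paper computes $\chi=\lceil n/c\rceil$ exactly via the nested-ceiling identity, whereas you bound $\chi<n/c+1$, which suffices equally well and in fact supplies the details the paper leaves as ``a simple calculation'').
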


\begin{proof}
Take $s_1=s, s_2=c-s$ and $\chi$ defined as above, then
\begin{center}
$\chi= \max\{ \lceil \frac{t}{s_1}\rceil, \lceil \frac{n-t}{s_2}\rceil\}
= \max\{ \lceil \frac{\lceil sn/c\rceil}{s}\rceil, \lceil \frac{n-\lceil sn/c\rceil}{c-s}\rceil\}
= \lceil \frac{\lceil sn/c\rceil}{s}\rceil= \lceil \frac{n}{c}\rceil$.
\end{center}
Since $n\ge c(c-1)$, a simple calculation turns out that $\lceil\frac{n}{\lfloor n/\chi\rfloor}\rceil\le \lceil\frac{n}{c-1}\rceil$.
Then by Theorem~\ref{thm-general-matching-upperbound}, we have
\begin{align*}
m(n, t, \lambda;s+1,c-s+1)&\le \frac{\lambda-1}{n}\left\lceil\frac{n}{c-1}\right\rceil \binom{n}{t}.
\end{align*}
When $c\mid n$, we easily have $\lceil\frac{n}{\lfloor n/\chi\rfloor}\rceil= \frac{n}{c}$, and thus
$m(n, t, \lambda;s+1,c-s+1)\le \frac{\lambda-1}{c}\binom{n}{t}$.
\end{proof}
Note that we do not require $\lambda\equiv sn\pmod{c}$ in Corollary~\ref{cor-focal-upperbound-explicit}.

\subsection{A Lower Bound of $m(n,t,\lambda;s_1+1,s_2+1)$}
First, we define $m(n,t,\lambda;s)$ to be the maximum size of a family $\mF \subseteq \binom{[n]}{t}$ such that $\mF$ contains no $s$-disjoint collection of size $\lambda$. We claim that
\begin{equation}\label{eq-general-matching-reduce}
m(n,t,\lambda;s_1+1,s_2+1)\ge \max \{m(n,t,\lambda;s_1+1), m(n,n-t,\lambda;s_2+1)\}.
\end{equation}
Indeed, for any family $\F\subseteq \binom{[n]}{t}$ without $(s_1+1)$-disjoint collection of size $\lambda$, clearly $\F$ contains no $(s_1+1,s_2+1)$-disjoint collection of size $\lambda$. This implies that $m(n,t,\lambda;s_1+1,s_2+1)\ge m(n,t,\lambda;s_1+1)$.
Since $\F^c= \{[n]\backslash F: F\in \F\}\subseteq \binom{[n]}{n-t}$, a symmetric argument implies that $m(n,t,\lambda;s_1+1,s_2+1)\ge m(n,n-t,\lambda;s_2+1)$.

Eq.~(\ref{eq-general-matching-reduce}) helps to reduce lower bounding $m(n,t,\lambda;s_1+1,s_2+1)$ to lower bounding a simpler situation $m(n,t,\lambda;s_1+1)$ (or $m(n,n-t,\lambda;s_2+1)$). We will give a lower bound for $m(n,t,\lambda;s_1+1)$ below.

\begin{lemma}\label{lem-general-matching-lowerbound}
Let $n,t,\lambda,s$ be positive integers. Then
\[m(n, t, \lambda;s+1)\ge \binom{n}{t}- \binom{n-\lceil \lambda/s\rceil+ 1}{t}.\]
\end{lemma}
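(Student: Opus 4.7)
The plan is to give an explicit construction of a family $\F\subseteq\binom{[n]}{t}$ of the desired size and check that it contains no $(s+1)$-disjoint collection of size $\lambda$.

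Let $r=\lceil \lambda/s\rceil -1$ and fix any subset $S\subseteq[n]$ of size $r$ (assume $r\le n$; otherwise the claimed bound is nonpositive and trivial). Define
\[
\F=\left\{A\in\binom{[n]}{t} : A\cap S\neq \emptyset\right\}.
\]
Counting directly, $|\F|=\binom{n}{t}-\binom{n-r}{t}=\binom{n}{t}-\binom{n-\lceil \lambda/s\rceil+1}{t}$, which matches the target. So the real content is to verify that $\F$ has no $(s+1)$-disjoint collection of size $\lambda$.

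For this I would argue by contradiction via double counting on $S$. Suppose $A_1,\ldots,A_\lambda\in\F$ form an $(s+1)$-disjoint collection (repetitions allowed). By Definition~\ref{def-disjoint}, each point of $[n]$, and in particular each point of $S$, lies in at most $s$ of the $A_i$. Hence
\[
\sum_{i=1}^{\lambda}|A_i\cap S|=\sum_{x\in S}\bigl|\{i\in[\lambda]:x\in A_i\}\bigr|\le s|S|=s\bigl(\lceil \lambda/s\rceil -1\bigr)<\lambda,
\]
where the final strict inequality holds because $s\lceil \lambda/s\rceil<\lambda+s$. On the other hand, membership in $\F$ forces $A_i\cap S\neq\emptyset$ for every $i$, so $\sum_{i=1}^{\lambda}|A_i\cap S|\ge \lambda$, a contradiction.

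There is essentially no obstacle here: the construction is a straightforward ``star-like'' family, and the verification is a one-line pigeonhole check once the parameter $r=\lceil\lambda/s\rceil-1$ is identified. The only minor subtlety is degenerate ranges (for instance $\lambda\le s$, where $r=0$ and the bound reduces to the trivial $0$, or $r>n$, where $\binom{n-r}{t}$ should be read as $0$ or the bound is vacuous); these I would dispense with in a single sentence.
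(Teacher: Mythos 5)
Your proposal is correct and follows essentially the same route as the paper: the identical star-type family $\F=\{A\in\binom{[n]}{t}:A\cap S\neq\emptyset\}$ with $|S|=\lceil\lambda/s\rceil-1$, verified by the same pigeonhole count $s|S|<\lambda$ against the fact that each member meets $S$. Your double-counting phrasing and explicit handling of the degenerate case $\lambda\le s$ match the paper's argument in substance.
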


\begin{proof}
When $\lambda\le s$, we have both $m(n, t, \lambda;s+1)=0$ and $\binom{n}{t}- \binom{n-\lceil \lambda/s\rceil+ 1}{t}= \binom{n}{t}- \binom{n}{t}=0$, hence the equality holds.

Now assume that $\lambda\ge s+1$ and then $\lceil \frac{\lambda}{s}\rceil\ge 2$. Take a nonempty subset $S\subseteq  [n]$ with $|S|= \lceil \frac{\lambda}{s}\rceil- 1$ and a family $\F\subseteq \binom{n}{t}$ consisting of all $t$-sets intersecting $S$. We claim that $\F$ contains no $(s+1)$-disjoint collection of size $\lambda$. Otherwise, suppose that there exist $F_1,F_2,\ldots, F_\lambda$ in $\F$ forming an $(s+1)$-disjoint collection. Since $F_i\cap S\neq \emptyset$, each $F_i$ contains at least one element of $S$. By Pigeonhole Principle and $|S|s= (\lceil \frac{\lambda}{s}\rceil- 1)s< \lambda$, there exist $s+1$ many $F_i$'s intersecting at some element of $S$, contradicted to the $(s+1)$-disjoint property. Therefore,
\[m(n, t, \lambda;s+1)\ge |\F|= \binom{n}{t}- \binom{n-\lceil \lambda/s\rceil+ 1}{t}.\]
\end{proof}

\begin{theorem}\label{thm-general-matching-lowerbound}
Let $n,t,\lambda,s_1,s_2$ be positive integers with $\min\{s_1+1,s_2+1\}\leq \lambda\le s_1+ s_2$. Then
\[m(n,t,\lambda;s_1+1,s_2+1)\ge \max \left\{\binom{n}{t}- \binom{n-\lceil \lambda/s_1\rceil+ 1}{t}, \binom{n}{t}- \binom{n-\lceil \lambda/s_2\rceil+ 1}{n-t}\right\}.\]
\end{theorem}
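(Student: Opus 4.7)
The plan is to combine the reduction inequality~(\ref{eq-general-matching-reduce}) with the explicit bound of Lemma~\ref{lem-general-matching-lowerbound} applied twice, once in each of its two symmetric forms. Both the reduction and the single-sided bound are already in hand, so the task is essentially to unpack the parameters correctly and observe that the $\binom{n}{n-t}=\binom{n}{t}$ identity converts the second application into the shape stated in the theorem.

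First I would invoke the inequality
\[
m(n,t,\lambda;s_1+1,s_2+1)\ge \max\bigl\{m(n,t,\lambda;s_1+1),\; m(n,n-t,\lambda;s_2+1)\bigr\},
\]
which is exactly (\ref{eq-general-matching-reduce}) and follows from the fact that any family of $t$-sets without an $(s_1+1)$-disjoint collection of size $\lambda$ has, a fortiori, no $(s_1+1,s_2+1)$-disjoint collection of size $\lambda$, while the complementary family $\F^c\subseteq\binom{[n]}{n-t}$ transfers the covering constraint into a dual disjointness constraint.

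Next I would apply Lemma~\ref{lem-general-matching-lowerbound} in two ways. With the parameter choice $(n,t,\lambda,s_1)$ the lemma yields
\[
m(n,t,\lambda;s_1+1)\ \ge\ \binom{n}{t}-\binom{n-\lceil \lambda/s_1\rceil+1}{t},
\]
producing the first term in the max. With the choice $(n,n-t,\lambda,s_2)$ it yields
\[
m(n,n-t,\lambda;s_2+1)\ \ge\ \binom{n}{n-t}-\binom{n-\lceil \lambda/s_2\rceil+1}{n-t}\ =\ \binom{n}{t}-\binom{n-\lceil \lambda/s_2\rceil+1}{n-t},
\]
producing the second term. Substituting both into the reduction inequality finishes the proof.

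There is essentially no obstacle in this argument; the mild subtlety is to verify that the hypothesis $\min\{s_1+1,s_2+1\}\leq \lambda\le s_1+s_2$ of the theorem is compatible with applying Lemma~\ref{lem-general-matching-lowerbound} to both sides, i.e.\ that the bound is nontrivial in at least one of the two parameter regimes. Since $\lambda\ge \min\{s_1+1,s_2+1\}$, at least one of $\lceil \lambda/s_1\rceil,\lceil \lambda/s_2\rceil$ is $\ge 2$, so at least one of the two terms in the max is strictly positive, and the stated lower bound is meaningful.
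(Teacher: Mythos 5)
Your proof is correct and follows exactly the paper's route: the paper's entire proof of this theorem is the one-line observation that Eq.~(\ref{eq-general-matching-reduce}) combined with Lemma~\ref{lem-general-matching-lowerbound} (applied with parameters $(n,t,\lambda,s_1)$ and $(n,n-t,\lambda,s_2)$) gives the stated bound. Your unpacking of the two applications and the identity $\binom{n}{n-t}=\binom{n}{t}$ is just a more explicit rendering of the same argument.
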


\begin{proof}
Combining Eq.~(\ref{eq-general-matching-reduce}) and Lemma~\ref{lem-general-matching-lowerbound}, we immediately complete the proof.
\end{proof}

\begin{corollary}\label{cor-focal-lowerbound-explicit}
Let $n,c,s,\lambda$ be integers with $n,c\ge 2, s\in [c-1]$,  and $n\ge c(c-1)$. Suppose that $\min\{s+1,c-s+1\}\leq \lambda\le c$. Let $t=\lceil sn/c\rceil$ and $s_0=\min \{s,c-s\}$. Then
\[
m(n, t, \lambda;s+1,c-s+1)\ge
\left\{
\begin{aligned}
 \left(\frac{s_0}{c}-\frac{1}{n}\right) \binom{n}{t}, & ~\text{ if }\lambda\ge s_0+1;\\
\left(\frac{c-s_0}{c}-\frac{1}{n}\right) \binom{n}{t}, & ~\text{ if }\lambda\ge c-s_0+1.
\end{aligned}
\right.
\]
If further $c\mid n$, we have
\[
m(n, t, \lambda;s+1,c-s+1)\ge
\left\{
\begin{aligned}
 \frac{s_0}{c}\binom{n}{t}, & ~\text{ if }\lambda\ge s_0+1;\\
\frac{c-s_0}{c}\binom{n}{t}, & ~\text{ if }\lambda\ge c-s_0+1.
\end{aligned}
\right.
\]
\end{corollary}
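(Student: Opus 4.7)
The plan is to apply Theorem~\ref{thm-general-matching-lowerbound} with $s_1 = s$ and $s_2 = c-s$, and then simplify the resulting maximum in terms of $s_0 = \min\{s, c-s\}$ via an elementary case split.

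First I would record two ingredients. The identity $\binom{n}{t} - \binom{n-1}{t} = \binom{n-1}{t-1} = \frac{t}{n}\binom{n}{t}$, together with its symmetric form $\binom{n}{t} - \binom{n-1}{n-t} = \binom{n-1}{t} = \frac{n-t}{n}\binom{n}{t}$, combined with the monotonicity $\binom{n-r+1}{j} \le \binom{n-1}{j}$ for $r \ge 2$, yields
\[
\binom{n}{t} - \binom{n-r+1}{t} \ge \tfrac{t}{n}\binom{n}{t}, \qquad \binom{n}{t} - \binom{n-r+1}{n-t} \ge \tfrac{n-t}{n}\binom{n}{t}.
\]
Next, since $t = \lceil sn/c\rceil$ satisfies $t - sn/c \in [0,1)$, I would note the elementary estimates $t/n \ge s/c$ and $(n-t)/n \ge (c-s)/c - 1/n$, both of which become equalities when $c \mid n$.

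For the first lower bound, assume $\lambda \ge s_0 + 1$ and split on which of $s, c-s$ attains $s_0$. If $s_0 = s$, then $\lambda \ge s+1$ guarantees $\lceil \lambda/s\rceil \ge 2$, so the first term of the max in Theorem~\ref{thm-general-matching-lowerbound} is at least $\frac{t}{n}\binom{n}{t} \ge \frac{s}{c}\binom{n}{t} = \frac{s_0}{c}\binom{n}{t} \ge \bigl(\frac{s_0}{c} - \frac{1}{n}\bigr)\binom{n}{t}$. If $s_0 = c-s$, then $\lambda \ge c-s+1$ guarantees $\lceil \lambda/(c-s)\rceil \ge 2$, so the second term of the max is at least $\frac{n-t}{n}\binom{n}{t} \ge \bigl(\frac{c-s}{c} - \frac{1}{n}\bigr)\binom{n}{t} = \bigl(\frac{s_0}{c} - \frac{1}{n}\bigr)\binom{n}{t}$. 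For the second lower bound $\lambda \ge c - s_0 + 1$, the same logic applies with the roles of the two terms of the max swapped: whichever of $s, c-s$ equals $c - s_0$ now plays the role of the denominator in the ceiling, and the matching ratio estimate supplies the factor $\frac{c-s_0}{c} - \frac{1}{n}$.

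Finally, in the divisibility case $c \mid n$ the ceiling disappears and the inequalities $t/n \ge s/c$, $(n-t)/n \ge (c-s)/c - 1/n$ become exact equalities $t/n = s/c$ and $(n-t)/n = (c-s)/c$, so the $1/n$ slack can be dropped, giving the sharp bounds $\tfrac{s_0}{c}\binom{n}{t}$ and $\tfrac{c-s_0}{c}\binom{n}{t}$. I do not expect a substantial obstacle: the corollary is a routine specialization of Theorem~\ref{thm-general-matching-lowerbound}, and the only care required is pairing each regime of $\lambda$ with the correct term of the maximum and tracking the small fractional loss produced by the ceiling in $t = \lceil sn/c\rceil$.
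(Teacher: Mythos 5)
Your proposal is correct and follows essentially the same route as the paper: specialize Theorem~\ref{thm-general-matching-lowerbound} with $s_1=s$, $s_2=c-s$, lower-bound the two terms of the maximum by $\frac{t}{n}\binom{n}{t}$ and $\frac{n-t}{n}\binom{n}{t}$ once the relevant ceiling is at least $2$, and convert these to $\frac{s}{c}$ and $\frac{c-s}{c}-\frac{1}{n}$ via $t=\lceil sn/c\rceil$, with the $\frac{1}{n}$ slack vanishing when $c\mid n$. The only cosmetic difference is that you split on which of $s,c-s$ attains $s_0$ while the paper splits on which ceiling is $\ge 2$; these are the same case analysis.
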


\begin{proof}
When $\lambda\ge s_0+1$, clearly $\lceil\lambda/s_0\rceil\ge 2$.
 If $\lceil\lambda/s\rceil\ge 2$, by Theorem \ref{thm-general-matching-lowerbound},
\begin{center}
$m(n,t,\lambda;s+1,c-s+1)\ge \binom{n}{t}-\binom{n-1}{t}= \frac{t}{n}\binom{n}{t}= \frac{\lceil sn/c\rceil}{n}\binom{n}{t}\ge \frac{s}{c}\binom{n}{t};$   \end{center}
if $\lceil\lambda/(c-s)\rceil\ge 2$, similarly
\begin{center}
$m(n,t,\lambda;s+1,c-s+1)\ge \binom{n}{t}-\binom{n-1}{n-t}= \frac{n-t}{n}\binom{n}{t}= \frac{n-\lceil sn/c\rceil}{n}\binom{n}{t}\ge (\frac{c-s}{c}-\frac{1}{n})\binom{n}{t}.$   \end{center}
Therefore,
\[m(n, t, \lambda;s+1,c-s+1)\ge \left(\frac{s_0}{c}-\frac{1}{n}\right) \binom{n}{t}.\]
Adding the condition $c\mid n$, obviously we can omit the ``$-\frac{1}{n}$'' factor in the above computation. Hence,
$m(n, t, \lambda;s+1,c-s+1)\ge \frac{s_0}{c}\binom{n}{t}$.

When $\lambda\ge c-s_0+1$, clearly $\lceil\frac{\lambda}{s}\rceil\ge 2$ and $\lceil\frac{\lambda}{c-s}\rceil\ge 2$.
The above argument tells that
\[m(n, t, \lambda;s+1,c-s+1)\ge \max\left\{\frac{s}{c},\frac{c-s}{c}-\frac{1}{n}\right\}\binom{n}{t}\ge \left(\frac{c-s_0}{c}-\frac{1}{n}\right) \binom{n}{t}.\]
Adding the condition $c\mid n$, similarly we have
$m(n, t, \lambda;s+1,c-s+1)\ge \frac{c-s_0}{c}\binom{n}{t}$.
\end{proof}

Note that we do not require $\lambda\equiv sn\pmod{c}$ in Corollary~\ref{cor-focal-lowerbound-explicit}.

\begin{corollary}\label{cor-gen-matchingnumber-exact}
Let $n,c,s,t$ be integers with $n,c\ge 2, s\in [c-1]$. Suppose that $ c\mid n$. Let $t=sn/c$ and $s_0=\min \{s,c-s\}$. Then 
\[m(n, t, s_0+1;s+1,c-s+1)= \frac{s_0}{c} \binom{n}{t},  ~~m(n, t, c-s_0+1;s+1,c-s+1)= \frac{c-s_0}{c} \binom{n}{t}.\]
\end{corollary}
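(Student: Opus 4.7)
The plan is to obtain Corollary~\ref{cor-gen-matchingnumber-exact} by matching the upper bound from Corollary~\ref{cor-focal-upperbound-explicit} with the lower bound from Corollary~\ref{cor-focal-lowerbound-explicit}, evaluated at the two specific values $\lambda = s_0 + 1$ and $\lambda = c - s_0 + 1$, under the divisibility hypothesis $c \mid n$.

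First I would invoke the ``$c \mid n$'' clause of Corollary~\ref{cor-focal-upperbound-explicit}, which yields
\[
m(n,t,\lambda;s+1,c-s+1) \le \frac{\lambda-1}{c}\binom{n}{t}
\]
for every admissible $\lambda$. Setting $\lambda = s_0 + 1$ produces the upper bound $\frac{s_0}{c}\binom{n}{t}$, while setting $\lambda = c - s_0 + 1$ produces the upper bound $\frac{c-s_0}{c}\binom{n}{t}$.

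Next I would apply the ``$c \mid n$'' clause of Corollary~\ref{cor-focal-lowerbound-explicit}. The value $\lambda = s_0 + 1$ satisfies the hypothesis $\lambda \ge s_0 + 1$, so it supplies the matching lower bound $\frac{s_0}{c}\binom{n}{t}$; similarly $\lambda = c - s_0 + 1$ satisfies $\lambda \ge c - s_0 + 1$, supplying the matching lower bound $\frac{c-s_0}{c}\binom{n}{t}$. Combining the two sides yields both claimed equalities.

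I do not expect any genuine obstacle. The only point worth double-checking is that Corollaries~\ref{cor-focal-upperbound-explicit} and~\ref{cor-focal-lowerbound-explicit} are both stated under the ambient hypothesis $n \ge c(c-1)$, whereas Corollary~\ref{cor-gen-matchingnumber-exact} only assumes $c \mid n$. Inspection of the proofs shows that when $c \mid n$ the relevant identities, namely $\chi = n/c$, $\lfloor n/\chi\rfloor = c$, $\lceil n/\lfloor n/\chi\rfloor\rceil = n/c$ on the upper-bound side, and the removal of the $-\tfrac{1}{n}$ term on the lower-bound side, all hold unconditionally, so the restriction $n \ge c(c-1)$ is not actually needed for the divisibility cases; hence the argument applies for every multiple of $c$.
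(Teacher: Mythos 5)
Your proposal is correct and is exactly the paper's argument: the corollary is obtained by combining the $c\mid n$ clauses of Corollary~\ref{cor-focal-upperbound-explicit} and Corollary~\ref{cor-focal-lowerbound-explicit} at $\lambda=s_0+1$ and $\lambda=c-s_0+1$. Your extra observation that the ambient hypothesis $n\ge c(c-1)$ is not needed when $c\mid n$ (since then $\chi=n/c$ and $\lceil n/\lfloor n/\chi\rfloor\rceil=n/c$ exactly) is a point the paper leaves implicit, and it is checked correctly.
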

\begin{proof}
The results immediately follow from Corollarys~\ref{cor-focal-upperbound-explicit} and~\ref{cor-focal-lowerbound-explicit}.
\end{proof}

Although we get two exact values of generalized matching numbers in Corollary~\ref{cor-gen-matchingnumber-exact}, the parameter $\lambda$ is restricted to $s_0+1$ or $c-s_0+1$, which does not match the condition of $\lambda$ in Theorems~\ref{mainthm-turan density} and~\ref{mainthm:code} for quantitative frameproof hypergraphs or codes, which require $\lambda=c$ when $ c\mid n$.

\section{Concluding remarks}\label{section-conclusion}
In this paper, we introduced quantitative frameproof codes and hypergraphs, which connect between frameproof codes (hypergraphs) and focal-free codes (hypergraphs). We presented asymptotically tight upper and lower bounds for both quantitative frameproof hypergraphs and codes, with the previous works of frameproof codes (hypergraphs) and focal-free codes (hypergraphs) as two extremal results.
We also determined the exact values of these hypergraphs and codes for infinitely many parameters.

Moreover, we introduced the generalized matching number as a key function in the above bounds. We also estimated upper and lower bounds on the  generalized matching number, and even characterized two exact values; see Corollary~\ref{cor-gen-matchingnumber-exact}.
However, many related problems remain open.
\begin{itemize}
  \item We have determined $f_{c,s}^{q}(n)$ asymptotically for fixed $c,s,n$ and $q\to \infty$. But how does $f_{c,s}^{q}(n)$ behave for fixed $c,s,q$ and $n\to \infty$? 
  \item Our estimates on the generalized matching number are very rough. Can we improve these bounds? 
\end{itemize}

\section*{Acknowledgements}
The research of Xiande Zhang is supported by the National Key
Research and Development Programs of China 2023YFA1010200 and 2020YFA0713100, the NSFC
under Grants No. 12171452 and No. 12231014, and the Innovation Program for Quantum Science
and Technology 2021ZD0302902.
The research of Xinqi Huang is supported by IBS-R029-C4.

{\small\bibliographystyle{abbrv}
\normalem
\bibliography{reference}}

\appendix

\section{Proof of Theorem \ref{mainthm:code}}\label{sec:main-code}

In this section, we present proofs of Theorems \ref{thm-gen-code-upperbound} and \ref{thm-gen-code-lowerbound}.
We first connect subsets of $[q]^n$ with multi-partite hypergraphs as in \cite{huang2024focal}.

For positive integers $q, n$, let $\pi:[q]^n\rightarrow \binom{[n]\times [q]}{n}$ be an injective mapping defined by $$\pi(\bm x):=\{(1,x_1),(2,x_2),\dots,(n,x_n)\}$$ for each $\bm x\in [q]^n$. For $\mathcal{C}\subseteq [q]^n$, let $\pi(\mathcal{C}):=\{\pi(\bm x):\bm x\in\mathcal{C}\}$. Then let
$\mH_n(q)=\pi([q]^n)$ be the complete $n$-partite $n$-uniform hypergraph over $[n]\times [q]$ with a natural partition $V_i = \{i\}\times [q]$, $i\in [n]$. For any $t\leq n$, let $\mH_n^{(t)}(q)$ be the complete $n$-partite $t$-uniform hypergraph defined on the same vertex set and parts as $\mH_n(q)$. Clearly, $|\mH_n^{(t)}(q)|=\binom{n}{t}q^t$.
%
%
 Moreover, for a $t$-subset  $T\subseteq [n]$, let $\pi(\bm x_T)=\{(i,x_i):i\in T\}\in \mathcal{H}_n^{(t)}(q)$. Crucially, $\pi$ inherits the own subsequence property in the sense that $\bm x_T$ is an own subsequence of $\bm x$ with respect to $\mathcal{C}$ if and only if $\pi(\bm x_T)$ is an own subset of $\pi(\bm x)$ with respect to $\pi(\mathcal{C})$.

\subsection{Proof of Theorem \ref{thm-gen-code-upperbound}}

\begin{proof}[Proof of Theorem~\ref{thm-gen-code-upperbound}]
    Suppose that $\C\subseteq [q]^n$ is a $(c,s)$-frameproof code. Let $\C_0$ be the set of codewords in $\C$ that have no own $(t-1)$-subsequence with respect to $\C$ and $\mathcal{C}_1= \mathcal{C}\backslash{C}_0$. By definition of $\pi$, $\pi(\mathcal{C})\subset \mH_n(q)$ is a $(c,s)$-frameproof hypergraph and $\pi(\mathcal{C}_0)$ is the family of $\pi(\bm x)\in \pi(\mathcal{C})$ without own $(t-1)$-subsets with respect to $\pi(\mathcal{C})$. Then by Lemma \ref{lem-gen-hypergraph-ownsubset}, every $\pi(\bm x)\in \pi(\mathcal{C}_0)$ contains at least $\binom{n}{t}-m(n,t,\lambda; s+1,c-s+1)$ own $t$-subsets with respect to $\pi(\mathcal{C})$.
    Let
    \[\mathcal{T}_0:= \{T\in \mH_n^{(t)}(q): T \text{ is an own $t$-subset of some edge in $\pi(\mathcal{C}_0)$ with respect to } \pi(\mathcal{C})\},\]
    and let
    \[\mathcal{T}_1:= \{T\in \mH_n^{(t)}(q): T \text{ contains one own $(t-1)$-subset of some edge in } \pi(\mathcal{C}_1) \}.\]
By definition of own subsets, $\mathcal{T}_0$ and $\mathcal{T}_1$ are disjoint, and thus $|\mathcal{T}_0|+|\mathcal{T}_1|\le |\mH_n^{(t)}(q)|= \binom{n}{t}q^t$.
By definition of own subsets again, we have
\begin{align}\label{eq-T0-2}
    |\mathcal{T}_0|\ge |\pi(\mathcal{C}_0)|\left(\binom{n}{t}-m(n,t,\lambda; s+1,c-s+1)\right).
\end{align}
For $|\mathcal{T}_1|$, each edge in $\pi(\mathcal{C}_1)$ contains at least one own $(t-1)$-subset, and the latter generates $(n-t+1)q$ sets of $\mathcal{T}_1$. Meanwhile, each set of $\mathcal{T}_1$ is generated by at most $t$ own $(t-1)$-subsets, which implies that
\begin{align}\label{eq-T1-2}
    |\mathcal{T}_1|\ge |\pi(\mathcal{C}_1)|\frac{(n-t+1)q}{t}.
\end{align}

Combining (\ref{eq-T0-2}) and (\ref{eq-T1-2}) yields that
\begin{align*}
    \binom{n}{t}q^t&\ge |\mathcal{T}_0|+|\mathcal{T}_1|
    \ge |\pi(\mathcal{C}_0)|\left(\binom{n}{t}-m(n,t,\lambda; s+1,c-s+1)\right)+ |\pi(\mathcal{C}_1)|\frac{(n-t+1)q}{t}\\
    &\ge (|\pi(\mathcal{C}_0)|+|\pi(\mathcal{C}_1)|)\left(\binom{n}{t}-m(n,t,\lambda; s+1,c-s+1)\right)\\
    &= |\pi(\mathcal{C})|\left(\binom{n}{t}-m(n,t,\lambda; s+1,c-s+1)\right)
\end{align*}
as needed, where the last inequality holds when $q\ge \frac{t}{n-t+1}(\binom{n}{t}-m(n,t,\lambda;s+1,c-s+1))$.
\end{proof}

\subsection{Proof of Theorem \ref{thm-gen-code-lowerbound}}

To prove Theorem \ref{thm-gen-code-lowerbound}, we need a variant of  Lemma \ref{lem-induced packing}  for multi-partite hypergraphs.
Suppose $\mF$ and $\mH$ are $k$-partite hypergraphs with vertex partitions $V(\mF) = \cup_{i = 1}^k W_i$ and $V(\mH) = \cup_{i = 1}^k V_i$, respectively.
A copy $\mF'$ of $\mF$ in $\mH$ is called {\it faithful} \cite{liu2024near} if for each $i\in [k]$, the copy of $W_i$ in $V(\mF')$ is contained in $V_i$. An $\mF$-packing $\{(V(\mF_i),\mF_i):i\in[m]\}$ in $\mathcal{H}$ is said to be {\it faithful}, if for every $j\in [m]$, $\mF_j$ is a faithful copy of $\mF$.

\begin{lemma}[\cite{liu2024near}]\label{lem-faithful induced packing}
    Let $n>t$ and $\mathcal{F}\subseteq\binom{[n]}{t}$ be fixed. Then viewing $\mathcal{F}$ as an $n$-partite $t$-uniform hypergraph, there exists a faithful induced $\mF$-packing $\{(V(\mF_i),\mF_i):i\in[m]\}$ in $\mH_n^{(t)}(q)$ with $m\ge(1-o(1))\frac{\binom{n}{t} q^t}{|\mF|}$, where $o(1)\rightarrow 0$ as $q\rightarrow\infty$.
\end{lemma}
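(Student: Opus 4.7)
The plan is to prove Lemma~\ref{lem-faithful induced packing} via the semi-random (Rödl nibble) method, adapted to the multipartite setting in the spirit of Frankl--Füredi and Pippenger--Spencer. First I would set up an auxiliary hypergraph $\Lambda$ whose vertex set is $V(\Lambda) = \mathcal{H}_n^{(t)}(q)$, so $|V(\Lambda)| = \binom{n}{t}q^t$, and whose hyperedges are the $|\mathcal{F}|$-element subsets of $V(\Lambda)$ that form a faithful copy of $\mathcal{F}$. A faithful copy is uniquely parametrized by a vector $\bm x \in [q]^n$, with edge set $\{\pi(\bm x_T) : T \in \mathcal{F}\}$, so $\Lambda$ is $|\mathcal{F}|$-uniform with exactly $q^n$ hyperedges. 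Any matching in $\Lambda$ translates directly into an edge-disjoint faithful $\mathcal{F}$-packing in $\mathcal{H}_n^{(t)}(q)$.

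Next I would verify the regularity and codegree conditions needed to apply a nearly-perfect matching theorem. Each vertex $\pi(\bm x_T) \in V(\Lambda)$ lies in exactly $q^{n-t}$ hyperedges of $\Lambda$, one for each extension of $\bm x_T$ to a full vector in $[q]^n$, so $\Lambda$ is $q^{n-t}$-regular. For any two distinct vertices $\pi(\bm x_T), \pi(\bm y_{T'})$, a common hyperedge requires a single vector extending both, which constrains at least $|T \cup T'| \ge t+1$ coordinates and leaves at most $q^{n-t-1}$ extensions. Hence the codegree is $O(q^{n-t-1}) = o(q^{n-t})$ as $q \to \infty$, and the Pippenger--Spencer theorem (equivalently, the Frankl--Rödl nearly-perfect matching theorem) yields a matching of size $(1-o(1)) |V(\Lambda)|/|\mathcal{F}| = (1-o(1)) \binom{n}{t} q^t / |\mathcal{F}|$.

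To upgrade the packing to an induced one, I would either (a) enforce the induced condition as additional conflict hyperedges within $\Lambda$, or (b) apply a cleanup/deletion argument after the nibble. In approach (a), for each pair of potential faithful copies $(\bm x, \bm y)$ with $|V(\mathcal{F}_{\bm x}) \cap V(\mathcal{F}_{\bm y})| > t$, or with $|V(\mathcal{F}_{\bm x}) \cap V(\mathcal{F}_{\bm y})| = t$ and the common $t$-set inside $\mathcal{F}_{\bm x} \cup \mathcal{F}_{\bm y}$, I would attach a conflict gadget that forbids selecting both copies simultaneously. In approach (b), I would note that the expected number of ``bad'' pairs in a matching produced by the nibble is $o$ of the matching size, so a short deletion step removes them without affecting the asymptotic count.

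The main obstacle is controlling the inserted conflict structure so that neither the degree nor the codegree bounds of $\Lambda$ are asymptotically degraded. The key observation is that two faithful copies $(\bm x, \bm y)$ share more than $t$ vertices only when $\bm x$ and $\bm y$ agree on at least $t+1$ coordinates, and the number of such pairs is $O(q^{2n - t - 1})$, a factor of $q^{-1}$ smaller than the generic codegree scale $q^{2n - t}$. This $q^{-1}$ gap provides exactly the slack needed to encode the induced constraint, and it is what ultimately guarantees that the resulting near-optimal matching is automatically a faithful \emph{induced} $\mathcal{F}$-packing of the claimed size. This mirrors the argument in~\cite{liu2024near}.
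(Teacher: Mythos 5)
First, a remark: the paper does not actually prove this lemma --- it is imported from \cite{liu2024near} --- so I am judging your argument on its own terms.

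There is a genuine gap at the heart of your counting. In your auxiliary hypergraph $\Lambda$, a faithful copy parametrized by $\bm x\in[q]^n$ has edge set $\{\pi(\bm x_T): T\in\mathcal{F}\}$, so every hyperedge of $\Lambda$ is supported entirely on vertices $\pi(\bm x_T)$ with $T\in\mathcal{F}$. Consequently $\Lambda$ is \emph{not} $q^{n-t}$-regular: the $\left(\binom{n}{t}-|\mathcal{F}|\right)q^t$ vertices $\pi(\bm x_T)$ with $T\notin\mathcal{F}$ are isolated. Pippenger--Spencer therefore cannot return a matching covering $(1-o(1))|V(\Lambda)|$ vertices; at best it covers the non-isolated part and yields $(1-o(1))|\mathcal{F}|q^t/|\mathcal{F}|=(1-o(1))q^t$ copies. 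Indeed, \emph{no} edge-disjoint family of identity-embedded copies can have more than $q^t$ members, since together they occupy at most $|\mathcal{F}|q^t$ edges of $\mathcal{H}_n^{(t)}(q)$. But the lemma --- and its use in the proof of Theorem~\ref{thm-gen-code-lowerbound} --- requires $m\ge(1-o(1))\binom{n}{t}q^t/|\mathcal{F}|$, which strictly exceeds $q^t$ whenever $|\mathcal{F}|<\binom{n}{t}$. So your construction cannot reach the claimed bound.

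The missing idea is that the candidate copies must be allowed to permute the coordinates: parametrize them by pairs $(\bm x,\sigma)\in[q]^n\times S_n$ with vertex set $\pi(\bm x)$ and edge set $\{\pi(\bm x_{\sigma(T)}):T\in\mathcal{F}\}$; this costs nothing in the application, which only uses the isomorphism type of each copy on its transversal. Then every vertex $\pi(\bm x_T)$ of $\mathcal{H}_n^{(t)}(q)$ has degree exactly $|\mathcal{F}|\,t!\,(n-t)!\,q^{n-t}$, the codegrees are $O(n!\,q^{n-t-1})$, which is $o$ of the degree since $n$ is fixed and $q\to\infty$, and the nibble then produces $(1-o(1))\binom{n}{t}q^t/|\mathcal{F}|$ edge-disjoint copies as claimed. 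Your treatment of the induced conditions (iv)--(v) is also too thin --- the nibble does not automatically control pair statistics of the output matching, so ``delete the bad pairs'' needs an actual argument (the standard route, as in Frankl--F\"uredi, is to pre-process by a suitable random restriction) --- but that is secondary compared with the regularity problem above.
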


\begin{proof}[Proof of Theorem~\ref{thm-gen-code-lowerbound}]
 Let $\mG\subseteq\binom{[n]}{t}$ be one of the largest $t$-uniform hypergraphs on $n$ vertices that contains no $(s+1,c-s+1)$-disjoint collection of $\lambda$ edges, where $t=\lceil sn/c\rceil$. Then we have $|\mG|=m(n,t,\lambda;s+1,c-s+1)$. Let $\mF = \binom{[n]}{t}\backslash \mG$ be the graph on $[n]$, then
        $|\mF| = \binom{n}{t} - |\mG|
              = \binom{n}{t} - m(n,t,\lambda;s+1,c-s+1)$.
    Applying Lemma \ref{lem-faithful induced packing} with $\mF$, we get a faithful induced $\mF$-packing $\{(V(\mF_i),\mF_i): i\in[m]\}$ in $\mH_n^{(t)}(q)$ with
    \begin{align*}
        m\ge (1-o(1))\frac{\binom{n}{t}q^t}{|\mF|}
        =
        (1- o(1))\frac{\binom{n}{t}q^t}{\binom{n}{t} - m(n,t,\lambda;s+1,c-s+1)}.
    \end{align*}

    Let $\C=\{\pi^{-1}(V(\mF_i)): i\in[m]\}$.
    It remains to show that $\C$ is $(c,s)$-frameproof. Assume on the contrary that $\bm x,\bm x^1, \ldots, \bm x^c\in\C$ form a $(c,s)$-focal code with focus $\bm x$, that is, $s[n]\subseteq I(\bm x,\bm x^1)\uplus\cdots\uplus I(\bm x,\bm x^c)$. Then we can find $A_1\subseteq I(\bm x,\bm x^1),\ldots, A_c\subseteq I(\bm x,\bm x^c)$ such that $s[n]=A_1\uplus\cdots\uplus A_c$.
    By Observation \ref{obs-own subset} (ii),  the $c$ members $A_1,\ldots,A_c$ are $(s+1,c-s+1)$-disjoint subsets. Suppose that $\bm x=\pi^{-1}(V(\mathcal{F}_m))$ and for each $i\in[c]$, $\bm x^i=\pi^{-1}(V(\mathcal{F}_{j_i}))$ for some $j_i\neq m$. Then we have $|A_i|\le|I(\bm x,\bm x^i)|=|V(\mathcal{F}_m)\cap V(\mathcal{F}_{j_i})|\le t$ for each $i\in[c]$. So there are at least $\lambda$ distinct $i\in[c]$ such that $|A_i|=t$, which implies that $|I(\bm x,\bm x^i)|=|V(\mathcal{F}_m)\cap V(\mathcal{F}_{j_i})|=t$. By the same reason as in the proof of Theorem \ref{thm-gen-hypergraph-lowerbound}, we obtain a contradiction.
\end{proof}

\end{document}